\documentclass{amsart}

\usepackage{amsmath}
\usepackage{amsthm}
\usepackage{amsfonts}
\usepackage{hyperref}
\usepackage{enumerate}
\usepackage[all]{hypcap}[2006/02/20]
\usepackage{color}

\newcommand\R{\mathbb{R}}
\newcommand\rd{\mathrm d}

\newcommand{\bb}[1]{\mathbb{#1}}

\newcommand{\pard}[2]{\frac{\partial #1}{\partial #2}}

\newcommand{\ho}{\left(\frac{\rd}{\rd t} -\Delta \right)}
\newcommand{\ddt}[1]{\frac{ \rd #1}{\rd t}}

\renewcommand{\b}{\beta}

\newcommand{\n}{\nabla}
\newcommand{\e}{\epsilon}
\newcommand{\s}{\sigma}
\renewcommand{\S}{\Sigma}
\renewcommand{\o}{\omega}
\renewcommand{\t}{\theta}
\newcommand{\T}{\Theta}
\newcommand{\ot}{\ov{\theta}}
\renewcommand{\r}{\rho}

\renewcommand{\l}{\lambda}
\renewcommand{\d}{\delta}
\renewcommand{\a}{\alpha}
\newcommand{\Ha}{\mathcal{H}}
\newcommand{\Si}{\Sigma}
\newcommand{\ra}{\rightarrow}
\newcommand{\ov}{\overline}

\newcommand{\C}{\mathbb{C}}
\newcommand{\Lo}{{\L}ojasiewicz}
\newcommand{\Cu}{\mathcal{C}}
\DeclareMathOperator\vol{vol}
\DeclareMathOperator\Ree{Re}
\DeclareMathOperator\Imm{Im}
\DeclareMathOperator\Sing{Sing}

\numberwithin{equation}{section}
\newtheorem{theorem}{Theorem}[section]

\newtheorem{lem}[theorem]{Lemma}

\newtheorem{prop}[theorem]{Proposition}
\newtheorem{cor}[theorem]{Corollary}
\theoremstyle{definition}
\newtheorem{defn}[theorem]{Definition}	

\newtheorem{ex}[theorem]{Example}
\theoremstyle{remark}
\newtheorem{remark}[theorem]{Remark}

\begin{document}

\title[Ancient solutions in LMCF]{Ancient solutions in Lagrangian mean curvature flow}

\author{Ben Lambert}
\author{Jason D. Lotay}
\author{Felix Schulze}
\address{Department of Mathematics, University College London, Gower Street, London, WC1E 6BT, United Kingdom}
\email{b.lambert@ucl.ac.uk; f.schulze@ucl.ac.uk}
\address{Mathematical Institute, University of Oxford, Woodstock Road, London, OX2 6GG, United Kingdom}
\email{jason.lotay@maths.ox.ac.uk}


\begin{abstract} 
Ancient solutions of Lagrangian mean curvature flow in $\bb{C}^n$ naturally arise as Type II blow-ups.  In this extended note we give structural and classification results for such ancient solutions in terms of their blow-down and, motivated by the Thomas--Yau Conjecture, focus on the almost calibrated case.  In particular, we classify Type II blow-ups of almost calibrated Lagrangian mean curvature flow when the blow-down is a pair of transverse planes or, when $n=2$, a multiplicity two plane.  We also show that the Harvey--Lawson Clifford torus cone in $\bb{C}^3$ cannot arise as the blow-down of an almost calibrated Type II blow-up.
\end{abstract}

\maketitle

\section{Introduction}

Ancient solutions are known to occur naturally in the theory of mean curvature flow as Type II blow-ups of singularities of a flow, see for example \cite{HuiskenSinestrariTypeII}. In this article we study ancient solutions to Lagrangian mean curvature flow (LMCF for short) in $\bb{C}^n$, that is, solutions $(L_t)_{t\in I}$ which exist for all $t \in I = (-\infty,0)$. Neves \cite{Neves.ZM} has shown that in the zero Maslov class case, no singularities of Type I can form and that singularities are, in a sense, unavoidable without further assumptions 
\cite{Neves.Singularities}. Even more, Neves shows that in this case any tangent flow consists of a finite union of special Lagrangian cones. Thus, to better understand singularity formation, a first step is to classify ancient solutions which arise as a Type II blow-up.

The Thomas--Yau Conjecture \cite{ThomasYau} asserts that the long-time existence and convergence of solutions to LMCF which are \emph{almost calibrated} (see Definition \ref{def:almost-calibrated}) is equivalent to a ``stability condition'', as is the case for Hermitian--Yang--Mills flow.  Motivated by this conjecture, we focus on almost calibrated ancient solutions to LMCF.  It is worth observing that the almost calibrated condition rules out the singularities constructed in \cite{Neves.Singularities}.     The stability condition in \cite{ThomasYau} is given in terms of the Lagrangian angle; see \cite{JoyConj} instead for conjectures relating long-time existence and convergence of LMCF to Bridgeland stability conditions and Fukaya categories.

As a consequence of Huisken's monotonicity formula, a blow-down of an ancient solution to mean curvature flow with uniformly bounded area ratios subconverges to a self-similarly shrinking (weak) mean curvature flow. In this paper we study the blow-down of ancient solutions to LMCF with the aim of using this to characterise the solution.  Our first main result is as follows.

\begin{theorem}\label{thm:Lawlor}
 Let $(L_t)_{-\infty<t<0}$ be an ancient exact almost calibrated solution to LMCF in $\bb{C}^n$ with uniformly bounded area ratios and uniformly bounded Lagrangian angle, see \eqref{eq:bounds}. Suppose a blow-down $(L^\infty_s)_{-\infty<s<0}$ of $(L_t)_{-\infty<t<0}$ is a pair of multiplicity one transverse planes $P_1\cup P_2$. Then either
 \begin{itemize}
  \item[(a)] $L_t = P_1\cup P_2$ for all $t<0$, or
  \item[(b)] up to rigid motions, $L_t$ is a Lawlor neck for all $t<0$. 
  \end{itemize}
\end{theorem}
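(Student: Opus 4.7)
The plan is to first establish strong smooth convergence of the ancient flow to $P_1\cup P_2$ at spatial infinity uniformly in time, then to use monotonicity together with the almost calibrated structure to force the flow to be static, and finally to invoke the classification of special Lagrangians asymptotic to a pair of transverse planes. Since $P_1\cup P_2$ is smooth and multiplicity-one away from $0$, with Gaussian density $2$ at $0$ and $1$ elsewhere, the uniformly bounded area ratios allow one to apply White--Brakke local regularity to the rescaled flows $\lambda_k^{-1} L_{\lambda_k^2 t}$, yielding smooth subsequential convergence to $P_1\cup P_2$ on any parabolic neighborhood bounded away from the spine $\{0\}\times(-\infty,0)$. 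This gives uniformly defined planar asymptotic ends for each $L_t$ and decay estimates on $|A|$ at spatial infinity, strong enough to apply parabolic maximum principles below.

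The crucial step is to show that $(L_t)$ is static. Let $\T_t=\T(L,0,0,t)$ denote Huisken's Gaussian area centered at the spacetime origin. Monotonicity gives $\T_t$ non-increasing in $t$, and the convergence in the first step forces $\lim_{t\to-\infty}\T_t=2$. Two cases arise. If $\T_t\equiv 2$, the rigidity case of Huisken's monotonicity implies that $(L_t)$ is a self-shrinker from $(0,0)$; combined with the almost calibrated hypothesis, the bounded Lagrangian angle, and the $P_1\cup P_2$ asymptotics, rigidity for almost calibrated Lagrangian self-shrinkers forces $L_t\equiv P_1\cup P_2$, yielding case~(a). Otherwise $\T_t<2$ for some $t$, and one exploits the exact, almost calibrated structure: the Lagrangian angle $\t$ satisfies $\ho\t=0$ with $|\t|<\pi/2$, and on the two ends converges to the angles $\t_1,\t_2$ of $P_1,P_2$. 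Applying Neves-type weighted monotonicity for $\t$ and $1-\cos\t$ against the backwards heat kernel, together with the decay estimates from the first step, one deduces that the mean curvature vanishes identically, so $L_t\equiv L$ is a stationary almost calibrated special Lagrangian with asymptotic cone $P_1\cup P_2$.

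To finish, one classifies such special Lagrangians. By Lawlor's original construction and the uniqueness results of Imagi--Joyce--Oliveira dos Santos for special Lagrangian smoothings of a pair of transverse Lagrangian planes in $\bb{C}^n$, the only possibilities are $L=P_1\cup P_2$ itself or, up to a rigid motion, a Lawlor neck associated to the characteristic angles of $P_1$ and $P_2$, yielding case~(b). The main obstacle is the staticity assertion in the second paragraph: the Lawlor neck is itself stationary but not a self-shrinker, so Huisken's monotonicity alone does not rule out non-static candidates. The almost calibrated and bounded Lagrangian angle hypotheses are essential there, providing the parabolic rigidity via Neves-type weighted monotone quantities for $\t$ that forces $H\equiv 0$ along the flow.
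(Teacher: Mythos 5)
There are two genuine gaps in your outline. The first and most serious is the staticity step. Your mechanism for forcing $H\equiv 0$ — a Neves-type weighted monotonicity for $\theta$ (or $1-\cos\theta$) against the backwards heat kernel — only works when the blow-down carries a \emph{single} Lagrangian angle $\ov\theta$: one needs $\int_{L_t}|\theta_t-\ov\theta|^2\Phi\to 0$ as $t\to-\infty$, which follows from varifold convergence to a cone of phase $\ov\theta$, and then monotonicity gives $\theta_t\equiv\ov\theta$ (this is exactly Proposition \ref{prop:SL.blowdown}, and it needs no dichotomy on the Gaussian density $\Theta_t$). But $P_1\cup P_2$ may have two \emph{distinct} Lagrangian angles, in which case no choice of $\ov\theta$ makes that integral tend to zero, your conclusion ``$L$ is a stationary special Lagrangian asymptotic to $P_1\cup P_2$'' cannot hold for a connected $L$, and the Imagi--Joyce--Oliveira dos Santos classification is inapplicable since no Lawlor neck is asymptotic to such a pair. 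The paper treats this case by an entirely different argument (Proposition \ref{prop:planar.blow-down}): the exact/almost calibrated structure theorem (Theorem \ref{thm:blow-downB}) shows each connected component of the blow-down sequence in a large ball concentrates on a single plane, a clearing-out argument removes extraneous pieces, localized White regularity then gives a uniform curvature bound on the rescaled flows, and rescaling this bound forces $A_t\equiv 0$, i.e.\ $L_t=P_1\cup P_2$. Nothing in your $\Theta_t<2$ branch substitutes for this; the assertion that weighted monotonicity ``deduces that the mean curvature vanishes identically'' is precisely what is unproven there.

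The second gap is in the final classification. The uniqueness theorem of Imagi--Joyce--Oliveira dos Santos requires the special Lagrangian to be a graph over the pair of planes outside a compact set with decay $O(r^{\rho})$, $\rho<1$, as in \eqref{eq:asymptotics}. Your first step (White--Brakke regularity applied to the rescalings) only yields smooth convergence of the blow-downs, i.e.\ scale-invariant curvature bounds and $o(r)$ (sublinear) decay of the graph, which is not enough to invoke their result for $n>2$. Upgrading sublinear decay to a definite rate $O(r^{\alpha})$ is a substantial step — the paper devotes Section \ref{sect:asymptotics} to it via an optimal {\L}ojasiewicz--Simon inequality (Theorem \ref{decayestimate}, leading to Theorem \ref{thm:SL.planar.uniqueness}) — and your outline omits it. (For $n=2$ one can bypass this with complex geometry, but the theorem is stated for all $n\geq 2$.) The broad skeleton — reduce to a stationary solution, then classify — matches the paper, but these two steps are where the real work lies.
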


\noindent For the definition of a solution being exact 
see Definition \ref{def:exact}. 
For the definition of a Lawlor neck, see Example \ref{ex:Lawlor}.  

The next result rules out a blow-down which is the simplest known non-planar special Lagrangian cone.

\begin{theorem}\label{thm:HL}
Let $(L_t)_{0\leq t<T}$ be an almost calibrated solution to LMCF with uniformly bounded area ratios in a Calabi--Yau 3-fold, with a singularity at $T$.  The blow-down of a Type II blow-up cannot be the Harvey--Lawson $T^2$-cone $C$, see \eqref{eq:HL.cone}.  
\end{theorem}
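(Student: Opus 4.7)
The plan is to use a Neves-style angle rigidity to show that any ancient almost calibrated LMCF with blow-down equal to a special Lagrangian cone must itself be a static special Lagrangian, and then to exploit the singular geometry of the Harvey--Lawson cone $C$ to contradict the Type II blow-up assumption.

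\textbf{Set-up.} A Type II blow-up at the singularity at $T$ yields an eternal almost calibrated LMCF $(L_t^\infty)_{t\in\bb{R}}$ in $\bb{C}^3$ with uniformly bounded area ratios, bounded curvature, and (after parabolic rescaling at the Type II points) $|A|^\infty(0,0)=1$. By hypothesis its blow-down is the static flow associated to $C$, which is special Lagrangian; after a rotation in $\bb{C}^3$ we may assume the Lagrangian angle of $C$ is $0$.

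\textbf{Angle rigidity.} The Lagrangian angle $\theta^\infty$ on $(L_t^\infty)$ satisfies $(\partial_t-\Delta)\theta^\infty=0$ and is bounded by the almost calibrated hypothesis. Huisken's monotonicity coupled with the heat equation for $\theta^\infty$ gives, for any base point $(x_0,t_0)$,
\[
\tfrac{d}{dt}\int_{L_t^\infty}(\theta^\infty)^2 \rho_{(x_0,t_0)}\,dH^3 \le -2\int_{L_t^\infty}|\n\theta^\infty|^2 \rho_{(x_0,t_0)}\,dH^3 \le 0.
\]
Since $\rho\,dH^3$ is invariant under parabolic rescaling and the blow-down $C$ has constant angle $0$, the integral on the left tends to $0$ as $t\to-\infty$. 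Combined with monotonicity and non-negativity, $\theta^\infty\equiv 0$ on $(L_t^\infty)$, so each $L_t^\infty$ is special Lagrangian, hence minimal, and the flow is static: $L_t^\infty\equiv L_0^\infty$.

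\textbf{Contradiction.} Since the flow is static, its blow-down equals its asymptotic cone, so $L_0^\infty$ is a special Lagrangian in $\bb{C}^3$ with asymptotic cone $C$. Either (i) $L_0^\infty=C$, which contradicts $|A|^\infty(0,0)=1$ because $C$ is singular at $0$; or (ii) $L_0^\infty$ is a smooth AC special Lagrangian smoothing of $C$. Case (ii) must be ruled out using the classification of smoothings of the HL $T^2$-cone (Harvey--Lawson, Lawlor, Joyce), combined with additional invariants --- flux and/or exactness data --- that the Type II blow-up of an almost calibrated flow must satisfy but that the known smoothings of $C$ do not.

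\textbf{Main obstacle.} The angle rigidity step is a relatively direct adaptation of Neves' monotonicity arguments for almost calibrated LMCF and should be fairly routine. The genuine difficulty is the final dichotomy, specifically excluding the smooth AC special Lagrangian smoothings of $C$: in the analogous situation of Theorem \ref{thm:Lawlor} (blow-down a pair of transverse planes), the smoothing case does occur as the Lawlor neck (case (b)), so strengthening the conclusion for the HL cone requires genuinely new input specific to the geometry and moduli of smoothings of the HL $T^2$-cone, presumably an obstruction coming from the flux invariant of any $T^2$-cone smoothing together with the constraints inherited from the almost calibrated Type II blow-up construction.
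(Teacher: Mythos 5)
Your first half follows the paper's route: since the blow-down is the special Lagrangian cone $C$, the Type II blow-up must itself be a static special Lagrangian, and your monotonicity computation with $(\theta^\infty)^2$ is essentially the proof of Proposition \ref{prop:SL.blowdown} (modulo the technical point that $\theta$ is not continuous on the Grassmannian of Lagrangian planes, which is handled there by a cut-off and is harmless under the almost calibrated bound). Excluding $L^\infty_0=C$ by smoothness of the blow-up is also fine.

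However, the decisive step is a genuine gap: you only assert that the smooth asymptotically conical smoothings of $C$ ``must be ruled out'' by flux or exactness data, without proving either the classification or the obstruction, and you explicitly flag this as unresolved. The paper closes exactly this gap with three concrete ingredients. First, since the flow is zero-Maslov, Type I singularities are excluded (Neves), so the singularity is Type II, and Proposition \ref{prop:top.blowup} (based on Cooper's idea, tracking the cohomology classes $[\lambda]$ and $[\rd\theta_t]$ under the Type II rescalings, where the factor $\sigma_i^2(T-t_i)\to\infty$ forces the relevant periods to vanish) shows that any Type II blow-up is exact and zero-Maslov. Second, Imagi's theorem gives the needed rigidity: a smooth special Lagrangian in $\C^3$ converging to $C$ as a varifold must be one of the Harvey--Lawson smoothings $L_j(a_j)$ of Example \ref{ex:HL}; this is the classification input you allude to but neither cite nor prove. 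Third, the $L_j(a_j)$ are not exact, since the integral of $\lambda$ over the generating circle is proportional to $a_j\neq 0$, contradicting exactness of the blow-up. Your guess about the flux/exactness obstruction points in the right direction, but without establishing exactness of the Type II blow-up and the uniqueness of the Harvey--Lawson smoothings, the contradiction is not reached, so as written the proof is incomplete at its key step. (A minor additional slip: with the paper's Definition \ref{defn:type.II}, the Type II blow-up is an ancient, not necessarily eternal, solution.)
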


If $n=2$ we may use the algebraic structure of special Lagrangians to yield an improved result, allowing also some higher multiplicity of the blow-down. 

\begin{theorem}\label{thm:dimension2}
Suppose that $(L_t)_{-\infty<t<0}$ is an ancient almost calibrated solution to LMCF in $\C^2$ satisfying \eqref{eq:bounds} whose blow-down is a plane $P$ with multiplicity two (with a single Lagrangian angle).  Then either
\begin{itemize}
\item[\emph{(a)}] $L_t=P$ for all $t<0$, or
\item[\emph{(b)}] up to rigid motions, $L_t$ is a special Lagrangian given by Example \ref{ex:SL.z2} for all $t<0$.
\end{itemize}
\end{theorem}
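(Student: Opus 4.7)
The plan is to first show that the ancient flow must be stationary, i.e., $L_t = L$ is a fixed special Lagrangian, and then to classify such $L$ asymptotic to a multiplicity two plane in $\bb{C}^2$ via the hyperk\"ahler identification of special Lagrangians with holomorphic curves.

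For the rigidity step, denote by $\theta_0$ the (single) Lagrangian angle of the blow-down $L^\infty = 2P$. Along LMCF the Lagrangian angle satisfies the heat equation $(\partial_t - \Delta)\theta = 0$, and under the almost calibrated hypothesis a standard computation gives
\[
(\partial_t - \Delta)\cos(\theta - \theta_0) = |\nabla\theta|^2 \cos(\theta - \theta_0) \geq 0.
\]
A maximum-principle argument on the non-compact $L_t$ — justified by \eqref{eq:bounds} together with Ecker--Huisken-type interior estimates — then shows that $\min_{L_t}\cos(\theta - \theta_0)$ is monotone non-decreasing in $t$. Since the Lagrangian angle is invariant under parabolic rescaling and the blow-down has the constant angle $\theta_0$, one has $\min_{L_{t_i}}\cos(\theta - \theta_0) \to 1$ along a blow-down sequence $t_i \to -\infty$. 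Monotonicity then forces $\min_{L_t}\cos(\theta - \theta_0) \geq 1$ for every $t$, hence $\theta \equiv \theta_0$ on $L_t$. So each $L_t$ is special Lagrangian, hence minimal, and the ancient flow is stationary: $L_t = L$ for all $t$.

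For the classification step, apply a hyperk\"ahler rotation of $\bb{C}^2 \cong \bb H$ determined by $\theta_0$, rotating the triple $(I,J,K)$ so that $L$ becomes a holomorphic curve $C$ with respect to a compatible complex structure $J'$, and $P$ becomes a $J'$-complex line $\ell$. The bounded area ratios transfer to $C$, which then has finite area density at infinity equal to $2$ (matching the multiplicity of $2\ell$). By the standard algebraicity of complex curves in $\bb{C}^2$ of controlled density at infinity — via a removable singularities argument at infinity in $\bb{CP}^2$ — $C$ extends to an algebraic curve of degree $2$, i.e., a conic tangent to the line at infinity. In the irreducible case this is a parabola, which after a rigid motion coincides with Example \ref{ex:SL.z2}; in the reducible case $C$ is a double line and we are in case (a).

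The main obstacle will be the angle-rigidity step, specifically rigorously passing the lower bound $\cos(\theta - \theta_0) \to 1$ through the (a priori weak) blow-down limit. This needs pointwise convergence of $\theta$ on compact subsets, which follows from Neves' regularity for almost calibrated LMCF and the bounded angle hypothesis, together with uniform control outside large compacts to prevent loss of mass at infinity from spoiling the passage of the $\min$ through the limit. The subsequent hyperk\"ahler rotation and algebraic classification of conics tangent to a line at infinity are then standard.
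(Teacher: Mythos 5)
Your second (classification) step is essentially the paper's route and is fine in outline: hyperk\"ahler rotation, algebraicity of the holomorphic curve from bounded area ratios (Stoll's theorem, Proposition \ref{complextoalgebraic}), degree $2$ forced by the blow-down, and the explicit analysis of degree-two curves (Theorem \ref{degreelessthantwo}), up to the minor omission of the two-parallel-planes sub-case in the reducible situation. The genuine gap is in your rigidity step. Your argument hinges on the claim that $\min_{L_{t_i}}\cos(\theta-\theta_0)\to 1$ (equivalently $\sup_{L_{t_i}}|\theta-\theta_0|\to 0$) along a blow-down sequence, but the blow-down convergence is only convergence as Radon measures/varifolds of the rescaled slices on compact sets; it gives no pointwise, let alone uniform, control of the angle. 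Worse, the limit here is a \emph{multiplicity two} plane, which is exactly the regime where White/Allard-type regularity (and hence any ``Neves regularity'' upgrade to smooth graphical convergence) is unavailable, and nothing whatsoever constrains the angle on the portion of the slice $L_{t_i}$ at distances much larger than the rescaling scale $\sigma_i$, over which your minimum is taken. So the proposed fix for what you yourself flag as the main obstacle does not exist in the form you invoke it. There are secondary problems in the same step: the minimum of $\cos(\theta-\theta_0)$ is only a supersolution quantity where $\cos(\theta-\theta_0)\ge 0$ (the almost calibrated bound gives $|\theta-\theta_0|<\pi$, not $\le\pi/2$), and the maximum principle on the noncompact, a priori curvature-unbounded slices needs justification beyond a citation.

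The paper avoids all of this in Proposition \ref{prop:SL.blowdown} by replacing the pointwise minimum with the Gaussian-weighted quantity $\int_{L_t}|\theta_t-\ov\theta|^2\Phi_{(0,1)}\,\rd\Ha^n$: Huisken's monotonicity \eqref{eq:monotonicity} together with \eqref{eq:theta.evol.2} makes it non-increasing, the Gaussian weight localizes so that varifold convergence of the blow-down (after replacing $\theta$ by a continuous function $\psi(\theta)$ on the Lagrangian Grassmannian, using the angle bound) suffices to show the quantity tends to $0$ as $t\to-\infty$, and monotonicity then forces $\theta_t\equiv\ov\theta$. If you want to salvage your approach you would need to prove a uniform angle estimate on whole time-slices from weak convergence to a higher-multiplicity cone, which is precisely what is not available; the integral/monotonicity argument is the correct substitute.
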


\noindent Note that this yields a unique model, describing how, in the almost calibrated case, LMCF can develop double density planes as tangent flows.

The proof of these results requires several steps, which are of independent interest. We describe these steps in the following overview of the paper.

We recall some basic definitions in Section \ref{sect:Primliminaries}. In Section \ref{sect:blowdowns}, we provide the proofs of a Neves-type structure theory  for blow-downs of ancient solutions, which follows with only minor modifications from \cite{Neves.ZM, NevesTian}. In Section \ref{sect:stationary} we apply these results to almost calibrated ancient solutions to show that if a blow-down is a pair of planes with different angles, then $L_t$ is a stationary pair of planes for all $t<0$ (cf.~\cite[Corollary 4.3]{Neves.Survey}). We additionally show that if the blow-down of the ancient solution is special Lagrangian, then the ancient solution is already a stationary special Lagrangian, see also \cite{NevesTian}. The first theorem follows after applying the rigidity results of Imagi, Joyce and Oliviera dos Santos \cite{IJO.Uniqueness.Lawlor} for Lawlor necks. However, to apply these results we have to weaken the asymptotics assumed in \cite{IJO.Uniqueness.Lawlor}. For this step we have to improve sub-linear decay to the asymptotic cone to decay like $O(r^{\alpha})$, where $r$ is the distance from the origin and $\alpha \in (0,1)$. This is achieved by an optimal \Lo--Simon inequality following an argument similar to Simon's proof of the uniqueness of tangent cones with isolated singularities, see \cite[Section 7]{SimonCone}. This is done in Section \ref{sect:asymptotics}. 

As our main source of ancient solutions is Type II blow-ups, we also include a result based on an idea by Cooper \cite{Cooper} which restricts the topology of such blow-ups: Proposition \ref{prop:top.blowup} states that any Type II blow-up must be exact and zero-Maslov. The work of Imagi \cite{Imagi} shows that any special Lagrangian smoothing of the Harvey-Lawson $T^2$ cone is non-exact, which then implies Theorem \ref{thm:HL}.

The third theorem follows from the observation that special Lagrangians are hyperk\"ahler rotations of complex curves which may be written as the zero sets of polynomials, the degree of which is determined by the number of planes in the blow-down counted with multiplicity. The details of these results are contained in Section \ref{sectionSLsurfaces}.  

\textbf{Acknowledgements.}
The authors were supported by a Leverhulme Trust Research Project Grant RPG-2016-174.
\section{Preliminaries}\label{sect:Primliminaries}

We start with some basic material that will be useful throughout the article.

\subsection{Lagrangians and special Lagrangians} Let $\C^n$ be endowed with its standard complex coordinates $z_j=x_j+iy_j$, for $j=1,\ldots,n$,   complex structure $J$,  
K\"ahler form $\omega=\sum_{j=1}^n\rd x_j\wedge\rd y_j$ and holomorphic volume form $\Omega=\rd z_1\wedge\ldots\wedge\rd z_n$.  Notice that the Liouville form 
\begin{equation}\label{eq:Liouville}
\lambda=\sum_{j=1}^n(x_j\rd y_j-y_j\rd x_j)
\end{equation}
satisfies $\rd\lambda=2\omega$.

Let $L$ be a \emph{Lagrangian} in $\C^n$; that is, a (real) $n$-dimensional (smooth) submanifold of $\C^n$ such that $\omega|_L\equiv 0$.  This condition is vacuous for $n=1$ (i.e.~all curves are automatically Lagrangian), so we will always assume that $n\geq 2$.   
Since $\lambda|_L$ is clearly closed, we have a special class of Lagrangians  for which $\lambda|_L$ is exact.  

\begin{defn}\label{def:exact}
A Lagrangian $L$ in $\C^n$ is \emph{exact} if the restriction of the Liouville form \eqref{eq:Liouville}, i.e.~$\lambda|_L$, is exact.
\end{defn}

As $\Omega|_L$ is unit length, if $L$ is oriented then there exists a \emph{phase} function $e^{i\t }:L\to \mathcal{S}^1$ such that 
\begin{equation}\label{eq:Lag.angle}
\Omega|_L=e^{i\t }\vol_L,
\end{equation} where $\vol_L$ is the induced volume form on $L$.  We can generalise this discussion from $\C^n$ to whenever the ambient manifold is a \emph{Calabi--Yau $n$-fold}, since one still has a K\"ahler form $\omega$ and a nowhere vanishing holomorphic volume form $\Omega$.

\begin{defn}
For a  Lagrangian $L$ in a Calabi--Yau $n$-fold with \emph{phase} $e^{i\t}$ defined by \eqref{eq:Lag.angle}, we call the (possibly multi-valued) function $\t$ the \emph{Lagrangian angle} of $L$.    When $\t $ is single-valued we say that $L$ is \emph{zero-Maslov}: this is equivalent to saying that the Maslov class in $H^1(L)$, which is proportional to $[\rd\theta]$, vanishes.
\end{defn}

It is important to note that the mean curvature vector $H$ and position vector $\mathbf{x}$ of an oriented Lagrangian $L$ in $\C^n$ with Lagrangian angle $\theta$ satisfy
 \begin{equation}\label{eq:magic.formulae}
 H\lrcorner\omega|_L=-\rd\theta\quad\text{and}\quad \mathbf{x^{\perp}}\lrcorner\omega|_L=\lambda|_L,
 \end{equation}
 where $\{\,\}^{\perp}$ is orthogonal projection onto the normal bundle of $L$.  We observe the following consequences of \eqref{eq:magic.formulae}, noting the first equation holds in any Calabi--Yau.
 
\begin{lem}\label{lem:magic.facts} 
\begin{itemize}
\item[]
\item[(a)] A Lagrangian in a Calabi--Yau $n$-fold is minimal if and only if its Lagrangian angle is constant on connected components.
\item[(b)] A Lagrangian $L$ in $\C^n$ is a cone if and only if $\lambda|_L=0$.
\end{itemize}
\end{lem}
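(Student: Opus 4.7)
The proof is a direct unpacking of the two identities in \eqref{eq:magic.formulae}, together with one structural observation about Lagrangians that I should isolate first. Namely, for any Lagrangian $L$ in a K\"ahler manifold, the bundle map $\nu\mapsto \nu\lrcorner\,\omega|_L$ is an isomorphism from the normal bundle $NL$ to $T^*L$. This holds because $J$ sends $TL$ to $NL$ (as $L$ is Lagrangian and $J$ is an isometry preserving $\omega$) and $\omega(\nu,X)=g(J\nu,X)$, so the pairing is non-degenerate along $L$. Consequently, a normal vector field vanishes if and only if its $\omega$-contraction with $L$ does, a fact I will use twice.

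For part (a), the first identity in \eqref{eq:magic.formulae} reads $H\lrcorner\,\omega|_L=-\rd\theta$. By the isomorphism above, $H\equiv 0$ on $L$ if and only if $\rd\theta\equiv 0$ on $L$, which is exactly the statement that $\theta$ is locally constant, i.e.~constant on connected components. Since $L$ being minimal is precisely $H\equiv 0$, this gives the claimed equivalence, and the argument works verbatim in a Calabi--Yau $n$-fold since only the local formula \eqref{eq:magic.formulae} and the Lagrangian/K\"ahler structure were used.

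For part (b), I would argue that a submanifold $L\subset\C^n$ (through the origin) is a cone precisely when the position vector $\mathbf{x}$ is everywhere tangent to $L$; equivalently $\mathbf{x}^\perp\equiv 0$. Indeed, $\mathbf{x}$ is the infinitesimal generator of the dilation family $s\mapsto e^s\cdot$, so $\mathbf{x}$ is tangent to $L$ iff $L$ is invariant under these dilations, which is the definition of a cone. Applying the isomorphism from the first paragraph to the second identity in \eqref{eq:magic.formulae}, we see that $\mathbf{x}^\perp\equiv 0$ if and only if $\mathbf{x}^\perp\lrcorner\,\omega|_L=\lambda|_L\equiv 0$. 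Combining these two equivalences gives (b).

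Neither step presents a real obstacle; the only mild subtlety is justifying the normal-to-cotangent isomorphism, and in (b) being clear that ``cone'' means invariance under the flow of $\mathbf{x}$, so that the equivalence with $\mathbf{x}^\perp=0$ is immediate. Everything else is a one-line consequence of \eqref{eq:magic.formulae}.
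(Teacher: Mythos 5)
Your proof is correct and is exactly the intended argument: the paper offers no separate proof, simply asserting the lemma as an immediate consequence of \eqref{eq:magic.formulae}, and your unpacking via the standard isomorphism $NL\cong T^*L$, $\nu\mapsto\nu\lrcorner\,\omega|_L$, fills in precisely the details the authors leave implicit. The only (harmless, and shared by the paper's own statement) caveat is that ``position vector tangent $\Rightarrow$ cone'' tacitly uses that the dilation flow lines starting in $L$ remain in $L$, which holds in the settings where the lemma is applied.
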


\begin{remark}\label{rmk:components.cone} In (a), we view a Lagrangian $L$ as the image of an immersion $\iota$, and abuse notation by saying that a connected component of $L$ is a connected component of its pre-image under $\iota$.  We will see the utility of this presently.  In (b), by a cone we simply mean a set invariant under the action of dilations.
\end{remark}

This leads us to an obvious special class of zero-Maslov Lagrangians: those for which the Lagrangian angle is constant.

\begin{defn}
A Lagrangian $L$ in a Calabi--Yau $n$-fold is \emph{special Lagrangian} if the Lagrangian angle $\t $ is constant.  Equivalently, there is a constant $\ov\t$ such that
\begin{equation}\label{eq:SL}
\Imm(e^{-i\ov\t}\Omega|_L)=0,
\end{equation} which means (up to a choice of orientation) that $\t=\ov\t$.
\end{defn}

\begin{remark}
Special Lagrangians are calibrated (by $\Ree(e^{-i\ov\t}\Omega)$ for some $\ov\t$) and so are area-minimizing (rather than just minimal).  Moreover, this description allows us to define  
special Lagrangian integral currents, with a given Lagrangian angle.
\end{remark}

\begin{remark}\label{rmk:planes.angles} Any oriented Lagrangian $n$-plane through $0$ in $\C^n$ is special Lagrangian, but the union of two such planes with different phases    need not be special Lagrangian.  In particular, it is well-known that the angle between a transversely intersecting pair of planes has to be sufficiently large for the union to be area-minimizing (see e.g.~\cite{Lotay.CG}).  
In fact, in this case the difference between the angles must be a multiple of $\pi$ for the union to be special Lagrangian with the same phase.
\end{remark}

\noindent Remark \ref{rmk:planes.angles} shows that even though a pair of intersecting planes is connected, the Lagrangian angle can take different values on each plane, which justifies the conventions in Remark \ref{rmk:components.cone}. 

It is important to notice that 2-dimensional special Lagrangians have an exceptional character, since Calabi--Yau 2-folds are hyperk\"ahler (see, e.g.~\cite{Lotay.CG}).

\begin{lem}\label{lem:SL.cplx}
A Lagrangian $L$ in a Calabi--Yau $2$-fold $M$ is special Lagrangian if and only if there is a hyperk\"ahler rotation of $M$ such that $L$ is a complex curve. 
\end{lem}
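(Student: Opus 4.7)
The plan is to exploit the hyperk\"ahler structure of a Calabi--Yau $2$-fold $M$: there is a $2$-sphere of complex structures $aI+bJ+cK$ with $a^{2}+b^{2}+c^{2}=1$, each giving a K\"ahler structure, with K\"ahler forms $a\omega_{I}+b\omega_{J}+c\omega_{K}$. Up to conventions, the given data on $M$ fits in as $\omega=\omega_{I}$ and $\Omega=\omega_{J}+i\omega_{K}$, so that a ``hyperk\"ahler rotation'' amounts to replacing $I$ by another complex structure in this twistor family.

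My first step will be to translate both sides into the vanishing of certain $2$-forms on $L$. The condition that $L$ is special Lagrangian for $(I,\omega,\Omega)$ with phase $\ov\theta$ reads
\[
\omega_{I}\big|_{L}=0\quad\text{and}\quad\Imm\bigl(e^{-i\ov\theta}\Omega\bigr)\big|_{L}=\bigl(\cos\ov\theta\,\omega_{K}-\sin\ov\theta\,\omega_{J}\bigr)\big|_{L}=0,
\]
i.e.\ the vanishing of $\omega_{I}$ together with that of a specific K\"ahler form lying in the $\omega_{J}\omega_{K}$-plane of the twistor family. On the complex-curve side, for any $J'$ in the twistor family, a real $2$-dimensional submanifold $L$ is $J'$-complex iff the $J'$-holomorphic $(2,0)$-form $\Omega_{J'}$ vanishes on $L$, since in complex dimension $2$ being complex means $TL$ is a $(1,0)$-subspace; in the hyperk\"ahler formalism $\Omega_{J'}$ is again a complex combination of two of the three base K\"ahler forms (rotated within the twistor family by the same angle as $J'$), so $J'$-complexity of $L$ amounts to a pair of K\"ahler form vanishings as well.

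The conclusion then follows by matching: I will choose $J'$ in the twistor family so that the real and imaginary parts of $\Omega_{J'}$ are exactly $\omega_{I}$ and $\cos\ov\theta\,\omega_{K}-\sin\ov\theta\,\omega_{J}$, which is just a rotation by angle $\ov\theta$ in the $JK$-plane. Then being $I$-special Lagrangian with phase $\ov\theta$ and being $J'$-complex reduce to the same pair of equations on $L$, and the converse direction follows by reading the computation backward. The most delicate part is really just the careful bookkeeping of orientation and sign conventions between the Calabi--Yau and hyperk\"ahler formalisms; the substance of the argument is pointwise linear algebra on each tangent space.
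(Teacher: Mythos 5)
Your proposal is correct and is the standard argument; the paper itself offers no proof of this lemma, merely observing that Calabi--Yau $2$-folds are hyperk\"ahler and citing \cite{Lotay.CG}, and your twistor-rotation computation (matching the vanishing of $\omega_I$ and $\Imm(e^{-i\ov\theta}\Omega)=\cos\ov\theta\,\omega_K-\sin\ov\theta\,\omega_J$ on $L$ with the vanishing of $\Omega_{J'}$ for $J'=\cos\ov\theta\,J+\sin\ov\theta\,K$) is exactly the argument that citation stands for. The only point worth making explicit in the converse is that, since $L$ is assumed Lagrangian, $\omega_I|_L=0$ together with the fact that $\omega_{J'}$ restricts to the area form on any $J'$-complex curve forces an arbitrary rotated complex structure $J'$ rendering $L$ complex to lie in the circle orthogonal to $I$, after which your identification of $\Omega_{J'}$ and the phase applies verbatim.
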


There is another important class of Lagrangians we shall study, defined as follows. 
\begin{defn}\label{def:almost-calibrated}
A Lagrangian $L$ in a Calabi--Yau $n$-fold with Lagrangian angle $\theta$ is \emph{almost calibrated} if there is a constant $\epsilon>0$ so that 
\begin{equation}\label{eq:almost.calibrated}
\cos\theta\geq\epsilon.
\end{equation} Notice that an almost calibrated Lagrangian is clearly zero Maslov. \end{defn}

\noindent The Lagrangian angle of almost calibrated Lagrangians can be chosen to lie in  $(-\frac{\pi}{2},\frac{\pi}{2})$, so they are potential candidates to be deformed to special Lagrangians with Lagrangian angle $0$.

Finally, in $\C^n$, we have an important class of Lagrangians in symplectic topology, which will also play a role in the Lagrangian mean curvature flow.

\begin{defn}
A Lagrangian $L$ in $\C^n$ is \emph{monotone} if there exists a constant $m>0$ such that 
\begin{equation*}
[\rd\theta]=m[\lambda|_L]\in H^1(L),
\end{equation*}
where $\theta$ is the Lagrangian angle of $L$.  
\end{defn}

\subsection{Lagrangian mean curvature flow}

Mean curvature flow in a Calabi--Yau $n$-fold (and thus in $\C^n$) preserves the Lagrangian condition, thus leading to the notion of Lagrangian mean curvature flow (LMCF).  The critical points of this flow are exactly (unions of) special Lagrangians and the flow also preserves the classes of exact, zero-Maslov and almost calibrated Lagrangians.  Therefore, given a solution $(L_t)_{t\in I}$ of LMCF we will let $\lambda_t=\lambda|_{L_t}$ and let $\t_t$ be the Lagrangian angle of $L_t$.

Two fundamental concepts in the study of mean curvature flow are that of blow-up and blow-down. We define two types of blow-ups.

\begin{defn}\label{defn:type.I}
Let $(L_t)_{0\leq t<T}$ be a solution to LMCF in $\C^n$ and let $x_0\in\C^n$.  For a positive sequence $\sigma_i\to\infty$, we define the (\emph{centred} or \emph{Type I}) \emph{blow-up} sequence at $(x_0,T)$ by
\begin{equation*}
L^i_s=\sigma_i(L_{T+\sigma_i^{-2}s}-x_0)\quad \forall\, s\in [-\sigma_i^2T,0).
\end{equation*}   
The sequence $(L^i_s)_{-\sigma_i^2T \leq s<0}$ always subconverges weakly (i.e.~as a Brakke flow) as $i\ra\infty$, by \cite[Lemma 8]{Ilmanen.Singularities}, to a limit flow $L^{\infty}_s$ in $\C^n$ for all $s<0$, which is called a \emph{tangent flow} at $(x_0,T)$.  A solution to LMCF which is defined for all times in $(-\infty,T_0)$ for some $T_0$ is called an \emph{ancient solution}: we will always take $T_0=0$.
\end{defn}

\noindent More generally, let $(L_t)_{0\leq t<T}$ be a solution to LMCF in a Calabi--Yau $n$-fold $M$ and let $x_0\in M$.  If $g$ is the Calabi--Yau metric on $M$, then for a positive sequence $\sigma_i\to\infty$ we consider flows $L^i_s=L_{T+\sigma^{-2}_is}$ in the manifolds $(M,\sigma_i^2g)$.  The sequence of pointed manifolds $(M,\sigma_i^2g,x_0)$ converges to $\C^n\cong T_{x_0}M$ with the flat metric as $i\ra\infty$.  We then define the tangent flow  at $(x_0,T)$ by taking the limit of the
sequence $(L^i_s)_{-\sigma_i^2T \leq s<0}$ in $(M,\sigma_i^2g,x_0)$: again, the tangent flow will always exist as a Brakke flow and will give an ancient solution $(L^{\infty}_s)_{-\infty<s<0}$ in $\C^n$.

\begin{defn}\label{defn:type.II}
Let $(L_t)_{0\leq t<T}$ be a solution to LMCF in $\C^n$.  Suppose that we have a sequence $(x_i,t_i)\in \C^n\times (0,T)$ such that $t_i\to T$ and, if $A_t$ is the second fundamental form of $L_t$ we have
\begin{equation}\label{eq:secondfunform.blow.up}
\sigma_i:=A(x_i,t_i)=\sup\{|A_t(x)|\,:\,x\in L_t,\,t\leq t_i\}>0.
\end{equation} 
We then define the \emph{Type II blow-up} sequence by
\begin{equation*}
L^i_s=\sigma_i(L_{t_i+\sigma_i^{-2}s}-x_i) \quad\forall\, s\in[-\sigma_i^2t_i,0).
\end{equation*}
Since $|A^i_s|$ is now bounded by $1$ for all $i$, the sequence $L^i_s$ subconverges as $i\ra\infty$ and it will define a (smooth) ancient solution $L^{\infty}_s$ in $\C^n$, which we call the  \emph{Type II blow-up}.
\end{defn}

\noindent In a Calabi--Yau $n$-fold we adopt the same procedure, now using pointed convergence of the sequence of manifolds $(M,\sigma_i^2g,x_i)$.

\begin{remark} (i)
Definitions \ref{defn:type.I} and \ref{defn:type.II} are related to a well-known dichotomy when studying singularities in mean curvature flow.  At a singularity at time $T$ we know that $\sup_{L_t}|A_t(x)|\to\infty$ as $t\nearrow T$.  We say that the singularity is Type I if
\begin{equation}\label{eq:Type.I.blowup}
\lim_{t\nearrow T} \sup_{L_t}|A_t|^2(T-t)<\infty
\end{equation}
and we say it is Type II otherwise (see e.g.~\cite{HuiskenSinestrariTypeII}).\\
(ii) To construct a Type II blow-up and all the consequences thereof it is sufficient that the points $x_i$ stay in a bounded set and that there exists a monotone sequence of radii $R_i \rightarrow +\infty$ and a constant $C_0$ such that
$$ \sup_{B_{R_i}(0) \times (-R_i^2,0]} |A_s^i| \leq C_0 |A_s^i(0,0)|\, .$$
This is often helpful if one only wants to take a local supremum of the curvature.
\end{remark}

We now define the blow-down of an ancient solution to LMCF.

\begin{defn}\label{defn:Blowup}
Let $(L_t)_{-\infty<t<0}$ be an ancient solution to LMCF in $\C^n$ and we assume that it has uniformly bounded area ratios, i.e.~ there exists $C>0$ such that
\begin{equation}\label{eq:bounded.area.ratios}
\mathcal{H}^n(L_t\cap B_r(x)) \leq C r^n
\end{equation}
for all $t\in (\infty, 0)$, $x \in \C^n$, $r>0$, where $\mathcal{H}^n$ is the \emph{real} $n$-dimensional Hausdorff measure on $\R^{2n} \cong \C^n$.  For a positive  sequence $\sigma_i\to\infty$, we define the blow-down sequence of $(L_t)_{-\infty<t<0}$ to be 
\begin{equation}\label{eq:blow-down.seq}
L_s^i :=\s_i^{-1} L_{\s_i^2s} \qquad  \forall s<0,
\end{equation}
and the corresponding \emph{blow-down} $(L^{\infty}_s)_{-\infty<s<0}$ of $(L_t)_{-\infty<t<0}$ is a subsequential limit of the sequence $(L^i_s)_{-\infty<s<0}$, which is again an ancient solution to LMCF.  
\end{defn}

A key tool in studying mean curvature flow, and thus LMCF, is the following.

\begin{defn}\label{defn:backwards.heat.kernel}  
Given $(x_0,l)$ in $\C^{n}\times \R$, we consider the backwards heat kernel
\begin{equation}\label{eq:backward.heat}
\Phi_{(x_0,l)}(x,t)=\frac{\exp\left(-\frac{|x-x_0|^2}{4(l-t)}\right)}{(4\pi(l-t))^{n/2}}\,.
\end{equation}
We will also sometimes write $\Phi(x,t)=\Phi_{(0,0)}(x,t)$.  

Given a Lagrangian $L$ in $\C^n$, $x_0\in\C^n$ and $l>0$, we have Gaussian density ratios \begin{equation}\label{eq:gaussian}
\Theta(x_0,l)=\int_{L}\Phi_{(x_0,l)}{(x,0)} \, \rd\Ha^n.
\end{equation} 
Given a solution $(L_t)_{t\in I}$ to LMCF, we will let $\Theta_t$ be the Gaussian density ratios of $L_t$.
\end{defn}

Using \eqref{eq:magic.formulae}, Huisken's monotonicity formula \cite{Huisken.Monotonicity} implies that for a solution $L_t$ of LMCF and functions $f_t:L_t\to\R$:,
\begin{equation}\label{eq:monotonicity}
\ddt{}\int_{L_t} f_t \Phi_{(x_0,l)} \, \rd\Ha^n = \int_{L_t} \left[\ho f_t -\left|\rd\theta_t+\frac{\lambda_t}{2t}\right|^2f_t\right] \Phi_{(x_0,l)} \, \rd\Ha^n, 
\end{equation}
for $t<l$ and when these integrals are well-defined (e.g.~in the non-compact setting, when $L_t$ have uniformly bounded area ratios and $f_t$ have at most polynomial growth at infinity).

 The monotonicity formula \eqref{eq:monotonicity}, and other considerations, motivates the study of the following ancient solutions which are solitons or self-similar solutions.
 
 \begin{defn}
 An ancient solution $(L_t)_{-\infty<t<0}$ to LMCF in $\C^n$ is a \emph{self-shrinker} if
 \begin{equation*}
 \rd\theta_t+\frac{\lambda_t}{2t}=0 \quad \forall\, t<0.
 \end{equation*}
A self-shrinker is an ancient solution and simply moves by dilations under the flow, shrinking to a cone at $t=0$.  

A solution $(L_t)_{t\in \R}$ to LMCF in $\C^n$ is a \emph{translator} if there is a constant $1$-form $\alpha$ on $\C^n$ so that
\begin{equation*}
\rd\theta_t=\alpha|_{L_t}\quad\forall\,t\in\R.
\end{equation*}
Here, as $L_t$ is defined for all $t\in\R$ it is an \emph{eternal} solution, and moves by translations in the direction given by $J\alpha^\sharp$.
 \end{defn}
 
 \begin{remark}
 If we assume that $L_t$ has bounded area ratios, then any tangent flow and any centred blow-down will be self-shrinkers (in a weak sense) by 
  \eqref{eq:monotonicity}.  
 \end{remark}

 We make an  observation, which follows immediately 
 from the definitions.
 
 \begin{lem}\label{lem:self.monotone}
Lagrangian self-shrinkers are monotone for each $t$.  In particular, $L_t$ is zero-Maslov if and only if it is exact.
 \end{lem}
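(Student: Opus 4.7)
The plan is to read the monotonicity condition directly off of the self-shrinker equation. Recall from Definition~\ref{defn:backwards.heat.kernel} and the self-shrinker definition that a self-shrinker satisfies the pointwise identity
\begin{equation*}
\rd\theta_t = -\frac{1}{2t}\lambda_t
\end{equation*}
on $L_t$, for each $t<0$. Both $\rd\theta_t$ and $\lambda_t=\lambda|_{L_t}$ are well-defined closed $1$-forms on $L_t$ (recalling that although $\theta_t$ may only be defined modulo $2\pi\bb{Z}$, its differential $\rd\theta_t$ is globally defined). The first step is therefore just to pass this identity to the level of de Rham cohomology in $H^1(L_t)$:
\begin{equation*}
[\rd\theta_t] = -\frac{1}{2t}\,[\lambda_t] \in H^1(L_t).
\end{equation*}
Since $t<0$, the constant $m:=-\frac{1}{2t}$ is strictly positive, so this is exactly the definition of $L_t$ being monotone with constant $m$.

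For the ``in particular'' clause, I would argue that both implications are immediate from the same identity. Exactness of $L_t$ is the statement $[\lambda_t]=0$, while the zero-Maslov condition is $[\rd\theta_t]=0$; since the two cohomology classes differ by the nonzero scalar $-1/(2t)$, one vanishes if and only if the other does. This gives the biconditional without any additional input.

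There is no substantive obstacle here: the proof is essentially a one-line consequence of the self-shrinker equation plus the definitions of monotone, zero-Maslov, and exact that were collected in Section~\ref{sect:Primliminaries}. The only point to be slightly careful about is the usual distinction between the multi-valued angle $\theta_t$ and its single-valued closed differential $\rd\theta_t$, but this is already built into the paper's conventions via the discussion surrounding \eqref{eq:Lag.angle} and the definition of zero-Maslov.
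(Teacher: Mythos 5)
Your proposal is correct and is precisely the argument the paper intends: the lemma is stated there as following "immediately from the definitions," namely by reading the self-shrinker identity $\rd\theta_t=-\frac{1}{2t}\lambda_t$ in $H^1(L_t)$ with $m=-\frac{1}{2t}>0$, and noting that the two classes vanish simultaneously. Nothing further is needed.
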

 
 Another consequence of \eqref{eq:monotonicity}  is that along LMCF  we have
 \begin{equation}\label{eq:Theta.dec}
\Theta_s(x_0,l)\leq \Theta_t(x_0,l+s-t)\quad\mbox{for all }x_0\in\C^n,\,s\geq t,\, l>0.
\end{equation}

The quantities  $\rd\theta_t$ and $\lambda_t$ satisfy evolution equations as follows (see e.g.~\cite{Neves.Survey}).

\begin{lem}\label{lem:evol.eqs}
Let $(L_t)_{t\in I}$ be a solution to LMCF in $\C^n$. Then
\begin{equation*}
\ho\rd\theta_t=0\quad\text{and}\quad \ho\lambda_t+2\rd\theta_t=0.
\end{equation*}
Hence, 
\begin{equation*}
\ddt{}[\rd\theta_t]=0\quad\text{and}\quad \ddt{}[\lambda_t]+2[\rd\t_t]=0.
\end{equation*}
\end{lem}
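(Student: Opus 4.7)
The plan is to reduce both equations to the well-known scalar evolution $\partial_t\t_t=\Delta\t_t$ along LMCF (an elementary consequence of $H=J\n\t_t$, the MCF identity $\partial_t\vol_{L_t}=-|H|^2\vol_{L_t}$, $\partial_t(\Omega|_{L_t})=\mathcal{L}_H\Omega|_{L_t}=\rd(H\lrcorner\Omega)|_{L_t}$, and $\Omega|_{L_t}=e^{i\t_t}\vol_{L_t}$). Interpreting $\Delta$ on $1$-forms as the Hodge Laplacian, so that $\rd\Delta f=\Delta\rd f$ for functions $f$, and using that $\rd$ on the fixed abstract underlying manifold commutes with $\partial_t$, the first identity $\ho\rd\t_t=0$ drops out by simply applying $\rd$ to the scalar equation.

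For the second identity, one uses that $\lambda$ is a fixed $1$-form on $\C^n$ with $\rd\lambda=2\o$ and that $\partial_t F_t=H$. Hence $\partial_t\lambda_t=(\mathcal{L}_H\lambda)|_{L_t}$, and Cartan's magic formula combined with \eqref{eq:magic.formulae} gives
\begin{equation*}
\partial_t\lambda_t=\rd(\lambda_t(H))+2(H\lrcorner\o)|_{L_t}=\rd(\lambda_t(H))-2\rd\t_t.
\end{equation*}
It remains to verify $\Delta\lambda_t=\rd(\lambda_t(H))$. Since $L_t$ is Lagrangian, $\rd\lambda_t=2\o|_{L_t}=0$, so $\Delta\lambda_t=-\rd(\rd^*\lambda_t)$, and the task reduces to showing $\rd^*\lambda_t=-\lambda_t(H)$. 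The Lagrangian condition ensures $\lambda_t^\sharp=J\mbf{x}^\perp$ is tangent to $L_t$, and the Gauss--Weingarten formula $\n^{L_t}_V(J\mbf{x}^\perp)=S_{J\mbf{x}^\top}V$ (with $S_N$ the shape operator) together with the standard trace identity $\mathrm{tr}\,S_N=\ip{H}{N}$ and $H=J\n\t_t$ then yields
\begin{equation*}
\rd^*\lambda_t=-\mathrm{div}_{L_t}(J\mbf{x}^\perp)=-\ip{H}{J\mbf{x}^\top}=-\ip{\n\t_t}{\mbf{x}^\top}=-\lambda_t(H),
\end{equation*}
as required.

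The cohomological statements then follow immediately: both $\Delta\rd\t_t=-\rd\rd^*\rd\t_t$ and $\Delta\lambda_t=-\rd\rd^*\lambda_t$ are exact, so $\partial_t[\rd\t_t]=0$ and $\partial_t[\lambda_t]=-2[\rd\t_t]$. The main subtlety is the scalar evolution of $\t_t$, which encodes the heart of LMCF; everything else is Cartan's formula plus the computation of $\rd^*\lambda_t$, which crucially exploits the Lagrangian structure through the identification $\lambda_t^\sharp=J\mbf{x}^\perp$.
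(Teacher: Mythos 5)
Your argument is correct, and since the paper states this lemma without proof (deferring to \cite{Neves.Survey}), your write-up is essentially the standard derivation found in the cited sources: the identity $\ho\rd\t_t=0$ by differentiating the scalar equation $\partial_t\t_t=\Delta\t_t$ (valid locally even when $\t$ is multivalued), and $\partial_t\lambda_t=\rd(\lambda(H))-2\rd\t_t$ from the Cartan-type first variation formula together with \eqref{eq:magic.formulae}. Your key computation checks out with the paper's conventions: $\lambda_t^\sharp=J\mbf{x}^\perp$ is tangent, $\n^{L_t}_V(J\mbf{x}^\perp)=\bigl(J(V-D_V\mbf{x}^\top)\bigr)^\top=S_{J\mbf{x}^\top}V$, so $\rd^*\lambda_t=-\operatorname{tr}S_{J\mbf{x}^\top}=-\ip{H}{J\mbf{x}^\top}=-\lambda(H)$, giving $\Delta\lambda_t=-\rd\rd^*\lambda_t=\rd(\lambda(H))$ since $\rd\lambda_t=2\o|_{L_t}=0$. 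The only point worth making explicit (as you do) is the convention that $\Delta$ on $1$-forms is the Hodge Laplacian signed so that $\Delta\rd f=\rd\Delta f$; with that fixed, the cohomological statements follow as you say.
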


\noindent The evolution equation for $\rd\theta_t$ holds in any Calabi--Yau $n$-fold.

A crucial property of zero-Maslov LMCF then follows from Lemma \ref{lem:evol.eqs}.

\begin{lem} If $(L_t)_{t\in I}$ is a zero-Maslov solution to LMCF in a Calabi--Yau $n$-fold, then the Lagrangian angles $\t_t$ can be chosen to satisfy
\begin{equation}\label{eq:theta.evol}
\ho\t_t=0.
\end{equation}
\end{lem}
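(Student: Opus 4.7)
The plan is to deduce the scalar heat equation \eqref{eq:theta.evol} from the form-level version in Lemma~\ref{lem:evol.eqs} by integrating the potential and then absorbing the resulting constant of integration into the natural ambiguity in the definition of the Lagrangian angle.

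First, since $(L_t)$ is zero-Maslov, the closed $1$-form $\rd\t_t$ is exact on each $L_t$, so it admits a single-valued primitive $\t_t:L_t\ra\R$ that is unique up to an additive constant on each connected component $L_t^\a$. Picking such a primitive $\t_0$ at some initial time and transporting it continuously via the flow parametrization $F:L\times I\ra M$ with $\partial_tF=H$ yields a smooth family $\t_t$ realizing the Lagrangian angle.

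Next I would set $\Psi_t:=\ho\t_t$ and compute
\begin{equation*}
\rd\Psi_t=\ho\rd\t_t=0
\end{equation*}
by Lemma~\ref{lem:evol.eqs}, using that $\rd$ commutes with the heat operator along the flow (in the convention where the Laplacian on $1$-forms is chosen so that $\rd\Delta f=\Delta\rd f$). Hence $\Psi_t$ is locally constant on $L_t$, equal to some function $\psi_\a(t)$ on each connected component $L_t^\a$.

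Finally I would redefine $\t_t$ by subtracting $\int_0^t\psi_\a(s)\,\rd s$ on each $L_t^\a$. This adjustment preserves the identity $\Omega|_{L_t}=e^{i\t_t}\vol_{L_t}$, since only additive constants on components are modified, and it produces $\ho\t_t=0$. The main obstacle is verifying the commutation $\rd\circ\ho=\ho\circ\rd$ on the evolving submanifold; this is a Bochner-type identity to be checked in the parametrization $F$, using that $\rd$ commutes with the material derivative on pullbacks and then matching the Laplacian-on-forms convention with the scalar Laplacian on exact $1$-forms. An alternative that bypasses this commutation issue is Smoczyk's direct calculation: differentiate the defining equation $F_t^*\Omega=e^{i\t_t}\vol_{g_t}$ in $t$, use $H=J\n\t_t$, the parallelism of $\Omega$ and $J$ in the Calabi--Yau, and the identity $\partial_t\vol_{g_t}=-|H|^2\vol_{g_t}$, then compare imaginary parts to read off \eqref{eq:theta.evol} directly.
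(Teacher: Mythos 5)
Your reduction to Lemma \ref{lem:evol.eqs} is fine up to the point where you conclude that $\Psi_t=\ho\t_t$ is locally constant: in the flow parametrization $\rd$ commutes with the time derivative, and with the Hodge Laplacian it commutes with $\Delta$, so $\rd\Psi_t=0$ indeed follows. The genuine gap is the final step. You cannot absorb $\psi_\a(t)$ by replacing $\t_t$ with $\t_t-\int_0^t\psi_\a(s)\,\rd s$: by \eqref{eq:Lag.angle} the Lagrangian angle is determined \emph{pointwise} modulo $2\pi$, so the only admissible modifications are additions of constants in $2\pi\bb{Z}$ on connected components, and for a family chosen continuously in $t$ these integer constants cannot vary with $t$. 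Your modified function therefore fails $\Omega|_{L_t}=e^{i\t_t}\vol_{L_t}$ whenever $\int_0^t\psi_\a\notin 2\pi\bb{Z}$; the assertion that the identity is preserved ``since only additive constants on components are modified'' is precisely where the argument breaks. What the lemma actually requires is that, for the smooth-in-$t$ single-valued choice of angle, the locally constant function $\psi_\a(t)$ \emph{vanishes}; this is a statement to be proved, not a gauge freedom to be exploited.

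The repair is the computation you relegate to a closing remark: differentiate $F_t^*\Omega=e^{i\t_t}\vol_{g_t}$ in $t$, using $\partial_tF=H=J\nabla\t_t$ (which is \eqref{eq:magic.formulae} rewritten), the parallelism of $\Omega$ and $J$ on the Calabi--Yau, and $\partial_t\vol_{g_t}=-|H|^2\vol_{g_t}$, and compare phases to obtain \eqref{eq:theta.evol} pointwise. This direct argument, due to Smoczyk and recorded in the survey of Neves cited alongside Lemma \ref{lem:evol.eqs}, is what the paper implicitly relies on. If you prefer to keep your integration route, you must supplement it with an independent proof that $\psi_\a\equiv 0$ for the continuous choice of $\t_t$ (for instance by computing $\partial_t\t_t$ at a single point from the first variation of the tangent plane); some such pointwise computation is unavoidable.
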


\noindent This evolution equation immediately implies that the almost calibrated condition \eqref{eq:almost.calibrated} is preserved under LMCF.  Moreover, given any $y\in\R$ and $m\in\bb{Z}$, we have 
\begin{equation}\label{eq:theta.evol.2}
\ho(\t_t-y)^{2m} + 2m(2m-1)(\t_t-y)^{2m-2}|\rd\t_t|^2=0.
\end{equation}

If $(L_t)_{t\in I}$ is a solution to LMCF in $\C^n$ consisting of exact Lagrangians, then we may write $\lambda_t=\rd\beta_t$ for functions $\beta_t:L\to\R$.  If, furthermore, the $L_t$ are zero-Maslov, then the $\beta_t$ satisfy a good evolution equation, again by Lemma \ref{lem:evol.eqs}

\begin{lem} If $(L_t)_{t\in I}$ is an exact zero-Maslov solution to LMCF in $\C^n$, then the functions $\beta_t$ may be chosen so that
\begin{equation}\label{eq:beta.evol}
\ho\beta_t+2\t_t=0.
\end{equation}
\end{lem}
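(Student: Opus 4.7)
Since each $L_t$ is exact, for every $t$ we can pick a primitive $\b_t : L \to \R$ with $\rd\b_t = \l_t$; this is uniquely determined up to adding a function of $t$ alone on each connected component of $L$. The plan is to compute $\partial_t\b_t$ and $\Delta\b_t$ separately, observe that $\ho\b_t + 2\t_t$ depends only on $t$, and then absorb that time-dependent constant by modifying $\b_t$: adding any $f(t)$ to $\b_t$ preserves $\rd\b_t = \l_t$ while shifting $\partial_t\b_t$ by $f'(t)$, so a suitable antiderivative kills the extra term.

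For the time derivative, write $\l_t = F_t^*\l$ where $F_t : L \to \C^n$ is the flow with $\partial_t F_t = H_t$. Then $\rd(\partial_t\b_t) = \partial_t F_t^*\l = F_t^*\mathcal{L}_{H_t}\l$, and Cartan's formula combined with $\rd\l = 2\o$ and $\l = \iota_{\mathbf{x}}\o$ gives $\mathcal{L}_{H_t}\l = \rd[\o(\mathbf{x}, H_t)] + 2\iota_{H_t}\o$; restricted to $L_t$ the second term equals $-2\rd\t_t$ by the first identity in \eqref{eq:magic.formulae}. Integrating
\begin{equation*}
\rd(\partial_t\b_t) = \rd[\o(\mathbf{x}, H_t)] - 2\rd\t_t
\end{equation*}
yields $\partial_t\b_t = \o(\mathbf{x}, H_t) - 2\t_t + c(t)$ for some function $c(t)$ of $t$ alone.

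For the Laplacian, the identity $\o(u,v) = g(Ju, v)$ together with the second equation in \eqref{eq:magic.formulae} identifies $\n\b_t = \l_t^\sharp$ with $J\mathbf{x}^\perp$, which is tangent to $L_t$ because $J$ swaps $TL_t$ and $NL_t$ on any Lagrangian. Expanding $\Delta\b_t = \mathrm{div}_{L_t}(J\mathbf{x}^\perp)$ in an orthonormal frame $\{e_i\}$ of $L_t$, using that $J$ is parallel, that $\n^{\C^n}\mathbf{x} = \mathrm{Id}$, and the Gauss decomposition $\n^{\C^n}_{e_i}\mathbf{x}^\top = \n^{L_t}_{e_i}\mathbf{x}^\top + \mathrm{II}(e_i,\mathbf{x}^\top)$, reduces $\Delta\b_t$ to $-\sum_i \o(\mathrm{II}(e_i,\mathbf{x}^\top), e_i)$. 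The total symmetry of the Lagrangian cubic form $(X,Y,Z)\mapsto \o(\mathrm{II}(X,Y),Z)$ then rewrites this as $-\o(H_t, \mathbf{x}^\top) = \o(\mathbf{x}, H_t)$ (using $\o(\mathbf{x}^\perp, H_t) = g(J\mathbf{x}^\perp, H_t) = 0$ since $J\mathbf{x}^\perp$ is tangent and $H_t$ is normal). Combining the two computations gives $\ho\b_t = -2\t_t + c(t)$, and replacing $\b_t$ by $\b_t + f(t)$ for an antiderivative $f$ of $-c$ yields \eqref{eq:beta.evol}. The essential step is the identity $\Delta\b_t = \o(\mathbf{x}, H_t)$, which rests on the total symmetry of the Lagrangian cubic form.
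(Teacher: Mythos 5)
Your proof is correct, but it takes a different route from the paper. The paper disposes of this lemma in one stroke: it quotes the evolution equation $\ho\lambda_t+2\rd\theta_t=0$ from Lemma \ref{lem:evol.eqs} (itself cited from Neves), writes $\lambda_t=\rd\beta_t$, uses that $\theta_t$ is a globally defined function in the zero-Maslov case to integrate the resulting exact identity, and absorbs the time-dependent constant of integration into $\beta_t$. You instead re-derive everything from first principles: the time derivative via $\rd(\partial_t\beta_t)=F_t^*\mathcal{L}_{H_t}\lambda$, Cartan's formula, $\rd\lambda=2\omega$ and $H\lrcorner\omega|_L=-\rd\theta_t$; and the spatial term via the identification $\nabla\beta_t=J\mathbf{x}^\perp$, the divergence computation, and the total symmetry of the cubic form $\omega(\mathrm{II}(\cdot,\cdot),\cdot)$, yielding the clean identity $\Delta\beta_t=\omega(\mathbf{x},H_t)$ (a check of which I confirm is consistent). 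In effect you reprove the $\lambda_t$ evolution equation at the level of its primitive, so your argument is self-contained and makes visible exactly where exactness (existence of $\beta_t$), the zero-Maslov condition (single-valuedness of $\theta_t$ in the integration step), and connectedness (the constant $c(t)$ per component) enter, whereas the paper's argument is shorter because it leans on the established 1-form equation. Two small points you should make explicit: the family $\beta_t$ must be chosen smoothly in $t$ for $\partial_t\beta_t$ to make sense (e.g.\ define $\beta_t(p)=\int_{p_0}^p\lambda_t$ from a fixed basepoint on each component, which exactness makes well defined), and the final absorption $\beta_t\mapsto\beta_t+f(t)$ with $f'=-c$ should be performed component by component; neither affects the validity of the argument.
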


Combining \eqref{eq:theta.evol} and \eqref{eq:beta.evol} shows that
\begin{equation}\label{eq:beta.evol.2}
\ho(\beta_t+2t\theta_t)=0.
\end{equation}

\subsection{Examples}  We now look at some important examples of ancient solutions, specifically some special Lagrangians, self-shrinkers and translators in $\C^n$.

\subsubsection{Lawlor necks}  

The simplest possible singularity model is a transversely intersecting pair of planes, so it is important to know how to resolve such a singularity.  In $\C^2$, one sees easily which pairs of planes are special Lagrangian and their special Lagrangian smoothings via Lemma \ref{lem:SL.cplx}, since we may appeal to complex geometry.

\begin{ex}\label{ex.SL.dim2}
Up to changes of complex coordinates, there is only one pair of transverse complex planes in $\C^2$, namely that defined by 
\begin{equation}
\{(w_1,w_2)\in\C^2\,:\,w_1w_2=0\}.
\end{equation}
This has a unique family of complex smoothings, given by
\begin{equation}
\{(w_1,w_2)\in\C^2\,:\,w_1w_2=c\},
\end{equation}
for $c\in\C\setminus\{0\}$.  Performing a hyperk\"ahler rotation, if we let $c=a+ib$ for $(a,b)\in\R^2\setminus\{(0,0)\}$, we find special Lagrangians (with Lagrangian angle $0$)
\begin{equation}
L_{\frac{\pi}{2}}(a+ib)=\left\{\left(x_1+i\frac{ax_1+bx_2}{x_1^2+x_2^2},x_2+i\frac{ax_2-bx_1}{x_1^2+x_2^2}\right)\,:\,(x_1,x_2)\in\R^2\setminus\{(0,0)\}\right\},
\end{equation}
which are embedded, diffeomorphic to $\mathcal{S}^1\times\R$ and asymptotic to  
 $P_0\cup P_{\frac{\pi}{2}}$, where
\begin{equation}\label{eq:P.phi}
P_{\phi}:=\{(e^{i\phi}x_1,e^{-i\phi}x_2)\,:\,(x_1,x_2)\in\R^2\}.
\end{equation}
It is elementary to see that

 $L_{\frac{\pi}{2}}(a+ib)$ is exact if and only if $b=0$. The Lagrangian $L_{\frac{\pi}{2}}(a)$ for $a>0$ is $\textrm{SO}(2)$-invariant and is called the \emph{Lagrangian catenoid}.

By varying complex coordinates we see that the pairs of transverse special Lagrangian planes are given by $P_0\cup P_{\phi}$ for $\phi\in(0,\pi)$.  One can follow the construction above to find a 2-parameter family of special Lagrangians $L_{\phi}(a+ib)$ with Lagrangian angle $0$ asymptotic to $P_0\cup P_{\phi}$ so that $L_{\phi}(a+ib)$ is exact if and only if $b=0$. 

\end{ex}

\noindent By the theory of resolution of singularities for complex curves, the $L_{\phi}(a+ib)$ are (up to rigid motions) the unique special Lagrangians in $\C^2$ which converge weakly to the pair of planes $P_0\cup P_{\phi}$, which are not the pair of planes itself. 

Further examples of special Lagrangians in $\C^2$ will appear in our study.

\begin{ex}\label{ex:SL.z2}
For $c=a+ib\in\C\setminus\{0\}$, consider the complex curve
\begin{equation}\label{eq:z2}
\{(cw,w^2)\in\C^2\,:\,w\in\C\}.
\end{equation}
After a hyperk\"ahler rotation we get special Lagrangians (with Lagrangian angle $0$)
\begin{equation}\label{eq:SL.z2}
L(a+ib)=\big\{\big(ax_1-bx_2+i(x_1^2-x_2^2),ax_2+bx_1-2ix_1x_2\big)\,:\,(x_1,x_2)\in\R^2\big\},
\end{equation}
which are embedded, diffeomorphic to $\R^2$, exact and asymptotic to $P_{\frac{\pi}{2}}$ as given in \eqref{eq:P.phi} with multiplicity two.   
\end{ex}

The exact special Lagrangians in Example \ref{ex.SL.dim2} can be extended to higher dimensions.

\begin{ex}\label{ex:Lawlor}  
Given $\phi_j\in (0,\pi)$ for $j=1,\ldots,n-1$ (for $n\geq 2$)  such that
\begin{equation}
\phi_n:=-(\phi_1+\ldots+\phi_{n-1})\in(-\pi,0),
\end{equation}
we let $\mathbf{\phi}=(\phi_1,\ldots,\phi_{n-1})$ and define a special Lagrangian plane (with Lagrangian angle $0$) in $\C^n$ by
\begin{equation}\label{eq:plane}
P_{\mathbf{\phi}}=\{(e^{i\phi_1}x_1,\ldots,e^{i\phi_n}x_n)\,:\,(x_1,\ldots,x_n)\in\R^n\}.
\end{equation}
There is a 1-parameter family of exact, embedded special Lagrangians $L_{\mathbf{\phi}}(a)$ in $\C^n$ for $a\in\R\setminus\{0\}$, which are diffeomorphic to $\mathcal{S}^{n-1}\times\R$ and asymptotic to $P_0\cup P_{\mathbf{\phi}}$ where $P_0=\R^n$.  

The $L_{\mathbf{\phi}}(a)$ are called \emph{Lawlor necks} and the case where $\phi_j=\frac{\pi}{2}$ for all $j$ gives an $\mathrm{SO}(n)$-invariant special Lagrangian, also often called the Lagrangian catenoid in $\C^n$.  For $n=2$, they coincide with the $L_{\phi}(a)$ given in Example \ref{ex.SL.dim2}.
\end{ex}

\noindent By \cite{IJO.Uniqueness.Lawlor}, up to rigid motions, Lawlor necks in $\C^n$ for $n>2$ are the \emph{unique} embedded, exact, special Lagrangians asymptotic to $P_0\cup P_{\mathbf{\phi}}$ in the sense that, outside some compact set they may be written as the graph of a normal vector field $v$ on the pair of planes so that, for some $\rho<1$, 
\begin{equation}\label{eq:asymptotics}
|v|=O(r^{\rho})\quad\text{and}\quad |\nabla v|=O(r^{\rho-1})\quad\text{as }r\ra\infty,
\end{equation}
where $r$ is the radial coordinate in the planes.  Furthermore, if we allow for immersed, exact, special Lagrangians the only additional possibility is the pair of planes itself.  For $n=2$, the work of \cite{IJO.Uniqueness.Lawlor} does not apply, but (as we have remarked) a stronger uniqueness holds which does not require \eqref{eq:asymptotics}.

\subsubsection{Grim reaper and Joyce--Lee--Tsui translators}

We recall that any curve $\gamma$ in $\C$ is Lagrangian, 
and therefore $\gamma\times\R$ is Lagrangian in $\C\times \C=\C^2$.  %
\begin{ex}\label{ex:grim}
The \emph{grim reaper} curve 
$$\gamma=\{s-i\log \cos s\in\C\,:\,s\in(-\textstyle\frac{\pi}{2},\textstyle\frac{\pi}{2})\}$$
has Lagrangian angle taking all values in the range $(-\frac{\pi}{2},\frac{\pi}{2})$, so this is zero-Maslov but is \emph{not} almost calibrated.  This defines a translator whose blow-down is a multiplicity two line.
Therefore $\gamma\times\R$ is a zero-Maslov but not almost calibrated translator in $\C^2$ whose blow-down is a multiplicity two plane.   
\end{ex}

A surprising fact is that we can actually have almost calibrated translators.

\begin{ex}\label{ex:translators} For $n> 2$, given $\psi\in(0,\pi)$ and $\phi_1,\ldots,\phi_{n-2}\in(0,\pi)$  such that  
\begin{equation*}
\phi_{n-1}:=\psi-(\phi_1+\ldots+\phi_{n-2})\in(0,\pi),
\end{equation*}
we let $\phi_n=0$ and let $\mathbf{\phi}=(\phi_1,\ldots,\phi_n)$.  For $n=2$, we let $\phi_1=\psi$ and $\phi_2=0$.

Joyce--Lee--Tsui \cite{JLT.Ex} constructed examples of embedded Lagrangian translators $L_{\mathbf{\phi}}(\psi)$ in $\C^n$ diffeomorphic to $\R^n$ whose Lagrangian angle takes values in $(0,\pi-\psi)$, so they are almost calibrated and exact.  Notice that by taking $\psi$ close to $0$ the oscillation of the Lagrangian angle of $L_{\mathbf{\phi}}(\psi)$ can be made arbitrarily small.  
The 1-form $\alpha$ defining the translating direction of $L_{\mathbf{\phi}}(\psi)$ is a multiple of $\rd x_{n}$, where the multiple goes to $0$ as $\psi\to 0$.

The blow-down of $L_{\mathbf{\phi}}(\psi)$ is $P_0\cup P_{\mathbf{\phi}}$ in the notation of \eqref{eq:plane}, so they are two multiplicity one Lagrangian planes intersecting along a line (given by the $x_n$-direction), with \emph{different} Lagrangian angles: one has angle $0$ whilst the other has angle $\psi$.
\end{ex}

\subsubsection{Harvey--Lawson $T^2$-invariant special Lagrangians}  In 
\cite{HarveyLawson.Calibrated},  Harvey and Lawson constructed special Lagrangians in $\C^n$ invariant under the action of the maximal torus in $\mathrm{SU}(n)$ for $n\geq 3$.  We only consider here the case $n=3$.

\begin{ex}\label{ex:HL}
Given $a_1,a_2,a_3>0$, we define
\begin{equation}\label{eq:HL}
L_1(a_1)=\{(z_1,z_2,z_3)\in\C^3\,:\,|z_1|^2-a_1=|z_2|^2=|z_3|^2,\,z_1z_2z_3\in [0,\infty)\}
\end{equation}
and similarly define $L_2(a_2)$ and $L_3(a_3)$ by cyclic permutations of $(1,2,3)$.  Then $L_j(a_j)$ is an embedded special Lagrangian in $\C^3$, with Lagrangian angle $0$, diffeomorphic to $\mathcal{S}^1\times\R^2$, and asymptotic at infinity to the special Lagrangian $T^2$-cone
\begin{equation}\label{eq:HL.cone}
C=\{(z_1,z_2,z_3)\in\C^3\,:\,|z_1|=|z_2|=|z_3|,\,z_1z_2z_3\in\R^+\}.
\end{equation} 
Each $L_j(a_j)$ is invariant under the $T^2$-action preserving $C$.\end{ex}

\noindent The special Lagrangians $L_j(a_j)$ are \emph{not} exact: in fact, the integral of $\lambda$ around the generator of the $\mathcal{S}^1$ in  $L_j(a_j)$ is proportional to $a_j$.  The cone $C$ is the simplest known non-planar special Lagrangian cone.

\section{Blow-downs of ancient solutions}\label{sect:blowdowns}

We show that blow-downs of zero-Maslov ancient solutions to LMCF in $\C^n$ are finite unions of special Lagrangian cones (with possibly different phases).  We further show that if the ancient solution is exact (and almost calibrated for $n>2$), then the blow-down 
of connected components of the flow  intersected with a ball is contained in a single  cone. This is the analogue of the structure theory of Neves \cite{Neves.ZM} for blow-ups of singularities of zero-Maslov LMCF, and the proofs 
are essentially the same.  Consequently, we only sketch the main points, and highlight any differences.

\subsection{Assumptions}\label{ss:assumptions}
Throughout we assume that $(L_t)_{-\infty<t<0}$ is a zero-Maslov ancient solution to LMCF in $\C^n$. We will assume that there is a bound on the area ratios and Lagrangian angle as $t\ra-\infty$;  that is, there exists a constant $A_0>0$ such that 
\begin{equation}
\limsup_{t\ra-\infty}\Ha^n(L_t\cap B_{R})\leq A_0 R^n\quad\text{and}\quad \limsup_{t\ra-\infty}\sup_{L_t}|\t_t|\leq A_0.
  \label{eq:bounds}
\end{equation}
 
By \eqref{eq:monotonicity} and \eqref{eq:theta.evol}, the bounds \eqref{eq:bounds} lead to uniform bounds on the area ratios and Lagrangian angle of $L_t$ for all $t$, see for example \cite[Lemma B.1]{Neves.ZM}.

We shall take a blow-down sequence $(L^i_s)_{-\infty<s<0}$ as in \eqref{eq:blow-down.seq} and let $\lambda^i_s=\lambda|_{L^i_s}$ and $\theta^i_s$ be the Lagrangian angle of $L_s^i$.

\subsection{Zero-Maslov structure theorem} 

We first recall a structure theorem for zero-Maslov blow-downs of ancient solutions to LMCF in $\C^n$.  For $n=2$ this is included in \cite[Theorem 3.1]{NevesTian}.  The result is analogous to \cite[Theorem A]{Neves.ZM} and the proof is almost identical; we include it for completeness.

\begin{theorem}\label{thm:blow-downA}
 Let $(L_t)_{-\infty<t<0}$ be a zero-Maslov ancient solution to LMCF in $\C^n$ satisfying \eqref{eq:bounds}.  For any blow-down sequence $(L_s^i)_{-\infty<s<0}$ as in \eqref{eq:blow-down.seq}, there exists $N\in\bb{N}$ and 
special Lagrangian cones $C_j$ with distinct Lagrangian angle $\ot_j$ for $j=1,\ldots,N$ 
 such that, after passing to a subsequence, for all $\phi\in C^{\infty}_c(\C^n)$, $f\in C^2(\bb{R})$ and $s<0$ we have
\begin{equation}\label{eq:blow-downA}
\underset{i\ra\infty} \lim \int_{L_s^i} f(\t^i_{s}) \phi\, \rd\Ha^n = \sum_{j=1}^N m_j f(\ov\t_j)\mu_j(\phi),
\end{equation}
where $\mu_j$ and $m_j$ are the Radon measures and multiplicity of the support of $C_j$.  Furthermore, the set $\{\ot_j:j=1,\ldots,N\}$ of angles does not depend on the sequence of rescalings chosen in \eqref{eq:blow-down.seq}.
\end{theorem}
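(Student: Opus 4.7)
The proof adapts Neves \cite[Theorem A]{Neves.ZM} to the ancient setting and breaks naturally into three steps. First, I would reduce to a self-similar Brakke limit. Applying \eqref{eq:monotonicity} with $f\equiv 1$ together with \eqref{eq:bounds} shows that each slice $L^i_s$ has uniformly bounded area ratios, and the bound on $\t^i_s$ transfers by \eqref{eq:theta.evol} and parabolic scale-invariance. Brakke--Ilmanen compactness then yields, along a subsequence, a limit $\mu^\infty_s$ of $L^i_s$ as Radon measures for a.e.~$s<0$ (extended to all $s$ below by self-similarity). Integrating \eqref{eq:monotonicity} over $(-\infty,0)$ with $f\equiv 1$ and $l=0$ gives
$$\int_{-\infty}^0\!\int_{L_t}\Bigl|\rd\t_t+\frac{\lambda_t}{2t}\Bigr|^2\Phi_{(0,0)}\,\rd\Ha^n\,\rd t<\infty,$$
which, after parabolic rescaling to the sequence $L^i_s$, forces the limit to satisfy $\rd\t^\infty_s+\lambda^\infty_s/(2s)=0$ weakly, so $(L^\infty_s)$ is self-similarly shrinking.

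Second, I would show $\rd\t^\infty\equiv 0$ on the support. Taking $f_t=\t_t^2$ in \eqref{eq:monotonicity} and using $\ho\t_t^2=-2|\rd\t_t|^2$ from \eqref{eq:theta.evol.2} with $y=0$, $m=1$ yields
$$\frac{\rd}{\rd t}\int_{L_t}\t_t^2\Phi_{(0,0)}\,\rd\Ha^n=-\int_{L_t}\Bigl(2|\rd\t_t|^2+\Bigl|\rd\t_t+\frac{\lambda_t}{2t}\Bigr|^2\t_t^2\Bigr)\Phi_{(0,0)}\,\rd\Ha^n.$$
The left-hand side is monotone decreasing in $t$ and bounded above by \eqref{eq:bounds}, so $\int_{-\infty}^0\!\int|\rd\t_t|^2\Phi_{(0,0)}\,\rd\Ha^n\,\rd t<\infty$. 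For any $[s_1,s_2]\subset(-\infty,0)$, a direct scaling computation identifies the corresponding rescaled integral along $L^i_s$ with $\int_{\sigma_i^2 s_1}^{\sigma_i^2 s_2}\!\int_{L_t}|\rd\t_t|^2\Phi_{(0,0)}\,\rd\Ha^n\,\rd t$, which tends to $0$ by integrability. Hence $\rd\t^\infty_s\equiv 0$ on the support; together with the self-shrinker equation this forces $\lambda^\infty_s\equiv 0$, and by Lemma~\ref{lem:magic.facts} each level set of $\t^\infty$ is a special Lagrangian cone. Since $|\t^\infty|\leq A_0$ and the Gaussian density is uniformly bounded, only finitely many distinct values $\ov\t_1,\ldots,\ov\t_N$ can occur, yielding a decomposition $\mu^\infty_s=\sum_{j=1}^N m_j\mu_j$ where $\mu_j$ is the Radon measure of the special Lagrangian cone $C_j$ with multiplicity $m_j$.

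Finally, passing to the limit in \eqref{eq:blow-downA} is achieved by combining the Radon-measure convergence $L^i_s\rightharpoonup\mu^\infty_s$ with the $L^2$-concentration of $\t^i_s$ near $\{\ov\t_1,\ldots,\ov\t_N\}$ from the previous step, exactly as in \cite{Neves.ZM}. For sequence independence, the observation is that $I_f(t):=\int_{L_t}f(\t_t)\Phi_{(0,0)}(x,t)\,\rd\Ha^n$ is invariant under parabolic rescaling; consequently, for any blow-down sequence $\sigma_i$ and any fixed $s<0$, the rescaled integral equals $I_f(\sigma_i^2 s)$, whose limit as $i\to\infty$ equals $\lim_{t\to-\infty}I_f(t)$ and depends only on $(L_t)$. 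Testing with $f$ a smooth bump near each candidate phase recovers the set $\{\ov\t_j\}$ and the multiplicities $\{m_j\}$ intrinsically. The main technical obstacle is transferring convergence of Radon measures to convergence of integrals involving $\t^i_s$, which lives on a varying manifold; this is handled via the quantitative gradient bound from the second step and approximation by cutoffs concentrated near each $\ov\t_j$.
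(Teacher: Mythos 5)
Your proposal follows essentially the same route as the paper: Huisken's weighted monotonicity \eqref{eq:monotonicity} applied with $f=1$ and $f=\theta_t^2$ (via \eqref{eq:theta.evol.2}), rescaled to force the local vanishing of $\int\int|\rd\theta^i_s|^2$ and of the drift term, followed by Neves' compactness theorem from \cite{Neves.ZM} to obtain the decomposition into finitely many special Lagrangian cones with distinct angles. This is exactly the paper's argument (Lemma \ref{lem:integral.convergence} and Proposition \ref{prop.compactness}); the only cosmetic difference is that you deduce the conical structure from the shrinker equation for the limit, whereas the paper concludes it from $\lambda|_{C_j}\equiv 0$ via \eqref{eq:magic.formulae}, which is equivalent bookkeeping.

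The one place where you genuinely deviate, and where there is a gap as written, is the sequence-independence of the set of angles. You assert that for arbitrary $f\in C^2$ the scale-invariant quantity $I_f(t)=\int_{L_t}f(\theta_t)\Phi_{(0,0)}(x,t)\,\rd\Ha^n$ has a limit as $t\to-\infty$, but $I_f$ is not monotone for general $f$ (monotonicity holds for nonnegative convex integrands such as $(\theta_t-y)^{2m}$, which is precisely the family the paper uses, following \cite{Neves.ZM}); without existence of the full limit, two different rescaling sequences could a priori give different subsequential values of $I_f(\sigma_i^2 s)$. The gap is fixable from what you have already established: since $\ho f(\theta_t)=-f''(\theta_t)|\rd\theta_t|^2$, the formula \eqref{eq:monotonicity} gives $\bigl|\tfrac{\rd}{\rd t}I_f(t)\bigr|\le C_f\int_{L_t}\bigl(|\rd\theta_t|^2+\bigl|\rd\theta_t+\tfrac{\lambda_t}{2t}\bigr|^2\bigr)\Phi_{(0,0)}\,\rd\Ha^n$, and your Steps 1--2 show the right-hand side is integrable in $t$ on $(-\infty,t_0)$, so $I_f'\in L^1$ and $\lim_{t\to-\infty}I_f(t)$ exists; alternatively, restrict to $f=(\theta_t-y)^{2m}$ as in the paper. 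Finally, note that testing with bump functions only recovers, for each angle, the sum of multiplicities weighted by the Gaussian densities of the corresponding cones; this identifies the set $\{\ot_j\}$ (which is all the theorem claims) but not the individual multiplicities $m_j$, so your closing claim overreaches slightly.
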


\noindent Here, by a special Lagrangian cone we mean a   special Lagrangian integral current which is invariant under dilations.  This result states that any blow-down of $L_t$ is a union of special Lagrangian cones, and the Lagrangian angles of these cones are independent of the blow-down sequence (though the cones themselves could vary).

\begin{proof}[Proof of Theorem \ref{thm:blow-downA}] 
By the assumptions  \eqref{eq:bounds} we may apply 
 \eqref{eq:monotonicity} to  
 $L^i_s$, where we choose the functions $f=\theta^2$ and $f=1$.  Using the evolution equation \eqref{eq:theta.evol.2}, we can argue just as in \cite[Lemma 5.4]{Neves.ZM} that the following holds.
\begin{lem}\label{lem:integral.convergence}
For any $-\infty<a<b<0$ and any $R>0$, we have 
\begin{equation}\label{eq:integral.convergence}
\underset{i\ra \infty} \lim \int_a^b \int_{L^i_s\cap B_R } (|\lambda^i_s|^2 +|\rd\theta^i_s|^2)\,\rd\Ha^n\rd s=0.
\end{equation}
\end{lem}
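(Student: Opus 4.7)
The plan is to adapt the proof of \cite[Lemma 5.4]{Neves.ZM} to the ancient-solution setting. First, I would apply the monotonicity formula \eqref{eq:monotonicity} centred at $(x_0,l)=(0,0)$ to the choices $f_t \equiv 1$ and $f_t = \theta_t^2$; by \eqref{eq:theta.evol.2} with $m=1$, $y=0$ we have $\ho\theta_t^2 = -2|\rd\theta_t|^2$, so the two identities read
\begin{align*}
\ddt{}\int_{L_t}\Phi_{(0,0)}\, \rd\mathcal{H}^n &= -\int_{L_t}\left|\rd\theta_t + \frac{\lambda_t}{2t}\right|^2 \Phi_{(0,0)}\, \rd\mathcal{H}^n,\\
\ddt{}\int_{L_t}\theta_t^2\, \Phi_{(0,0)}\, \rd\mathcal{H}^n &= -\int_{L_t}\left(2|\rd\theta_t|^2 + \theta_t^2\left|\rd\theta_t + \frac{\lambda_t}{2t}\right|^2\right)\Phi_{(0,0)}\, \rd\mathcal{H}^n.
\end{align*}
Under \eqref{eq:bounds} both left-hand quantities are non-negative, uniformly bounded, and non-increasing in $t$, so they have finite limits as $t\to -\infty$. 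Integrating from $-\infty$ to any fixed $-\epsilon<0$ produces
\begin{equation*}
\int_{-\infty}^{-\epsilon}\!\int_{L_t}\left(|\rd\theta_t|^2 + \left|\rd\theta_t + \frac{\lambda_t}{2t}\right|^2\right)\Phi_{(0,0)}\, \rd\mathcal{H}^n\, \rd t < \infty.
\end{equation*}

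The second step is a parabolic scale-invariance computation. Under $L^i_s = \sigma_i^{-1}L_{\sigma_i^2 s}$ one checks $|\rd\theta^i_s|^2 = \sigma_i^2|\rd\theta_t|^2$, $|\lambda^i_s|^2 = \sigma_i^{-2}|\lambda_t|^2$, $|\rd\theta^i_s + \lambda^i_s/(2s)|^2 = \sigma_i^2|\rd\theta_t + \lambda_t/(2t)|^2$, $\rd\mathcal{H}^n_{L^i_s} = \sigma_i^{-n}\rd\mathcal{H}^n_{L_t}$, and the parabolic invariance $\Phi_{(0,0)}(\sigma_i^{-1}y, \sigma_i^{-2}t) = \sigma_i^n\Phi_{(0,0)}(y,t)$. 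Combined with $\rd s = \sigma_i^{-2}\rd t$, all factors cancel to give
\begin{equation*}
\int_a^b\!\int_{L^i_s}\left(|\rd\theta^i_s|^2 + \left|\rd\theta^i_s + \frac{\lambda^i_s}{2s}\right|^2\right)\Phi_{(0,0)}\, \rd\mathcal{H}^n\, \rd s = \int_{\sigma_i^2 a}^{\sigma_i^2 b}\!\int_{L_t}\left(|\rd\theta_t|^2 + \left|\rd\theta_t + \frac{\lambda_t}{2t}\right|^2\right)\Phi_{(0,0)}\, \rd\mathcal{H}^n\, \rd t.
\end{equation*}
Since $a,b<0$ and $\sigma_i\to\infty$, the right-hand integration window $[\sigma_i^2 a, \sigma_i^2 b]$ is a tail of $(-\infty,0)$ retreating to $-\infty$, so by the finiteness established above the right-hand side vanishes as $i\to\infty$.

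To conclude, I would use the elementary inequality
\begin{equation*}
|\lambda^i_s|^2 = (2s)^2\left|\frac{\lambda^i_s}{2s}\right|^2 \leq 8s^2\left(\left|\rd\theta^i_s + \frac{\lambda^i_s}{2s}\right|^2 + |\rd\theta^i_s|^2\right),
\end{equation*}
which on $[a,b]$ gives a uniform factor $8\max(a^2,b^2)$, together with the positive lower bound $\Phi_{(0,0)}(x,s)\geq c(R,a,b)>0$ on $B_R\times[a,b]$, to convert the weighted tail estimate into the unweighted statement \eqref{eq:integral.convergence}. The only genuinely non-routine part of the argument is the parabolic scale-invariance of the Huisken energy: it is essentially a conspiracy of the scalings of $\rd\theta$, $\lambda$, the volume measure and the heat kernel, and everything else is a tail-of-a-convergent-integral estimate.
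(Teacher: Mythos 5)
Your argument is correct and is essentially the paper's proof (which simply defers to the argument of Neves's Lemma 5.4): you apply the monotonicity formula \eqref{eq:monotonicity} with $f=1$ and $f=\theta_t^2$ together with \eqref{eq:theta.evol.2}, use the parabolic scale-invariance of the Gaussian-weighted dissipation to identify the rescaled integral with a tail of a convergent integral, and finish with the elementary bound on $|\lambda^i_s|^2$ and the lower bound for $\Phi_{(0,0)}$ on $B_R\times[a,b]$. The only (harmless) cosmetic difference is that you run the monotonicity formula on the original flow $(L_t)$ and rescale afterwards, whereas the paper applies it directly to the rescaled flows $L^i_s$; the uniform bounds needed as $t\to-\infty$ are exactly those supplied by \eqref{eq:bounds}, as noted in Subsection \ref{ss:assumptions}.
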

We may now choose $a<0$ such that 
 \begin{equation}\underset{i\ra \infty} \lim \int_{L^i_a\cap B_R } (|\lambda^i_a|^2 +|\rd\theta^i_a|^2)\rd\Ha^n=0,
  \label{eq:asubs}
 \end{equation}
since this holds for almost all $a<0$ by  \eqref{eq:integral.convergence}.  Then \eqref{eq:asubs}, together with assumptions  \eqref{eq:bounds}, allows us to invoke a compactness result of Neves \cite[Proposition 5.1]{Neves.ZM}.

\begin{prop}\label{prop.compactness}
 Let $(L^i)$ be a sequence of zero-Maslov class Lagrangians in $\bb{C}^n$ such that, for some fixed $R>0$, 
there exists a constant $A_0>0$ so that
\begin{gather*}
\Ha^n(L^i\cap B_{2R}) \leq A_0R^n,\quad \underset{L^i\cap B_{2R} }\sup |\t^i|\leq A_0\qquad \forall i\in\bb{N},\\ 
 \underset{i \ra \infty} \lim \Ha^{n-1}(\partial L^i \cap B_{2R }) =0\quad\text{and}\quad
 \underset{i\ra \infty} \lim \int_{L^i\cap B_{2R} } |\rd\theta^i|^2 \rd\Ha^n =0.
 \end{gather*}
There exists $N\in\bb{N}$, special Lagrangians integral currents $C_j$ with Lagrangian angle $\ot_j$ for $j=1,\ldots,n$ such that, after passing to a subsequence, for all $\phi\in C^{\infty}_c(B_R )$ and $f\in C(\bb{R})$ we have
\begin{equation*}
\underset{i\ra \infty} \lim \int_{L^i} f(\t^i)\phi\, \rd \Ha^n = \sum_{j=1}^Nm_jf(\ot_j)  \mu_j(\phi),
\end{equation*}
where $\mu_j$ and $m_j$ are the Radon measure and multiplicity of the support of $C_j$.
\end{prop}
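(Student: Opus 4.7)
The plan is to follow the strategy of \cite[Proposition 5.1]{Neves.ZM}. First, the area bound $\Ha^n(L^i\cap B_{2R})\leq A_0 R^n$ and compactness of integral varifolds yield a subsequential limit $V$ of $L^i$ which is an integer rectifiable varifold in $\ov{B_{2R}}$. The identity $|H^i|=|\rd\theta^i|$ coming from \eqref{eq:magic.formulae}, together with the hypothesis $\int_{L^i\cap B_{2R}}|\rd\theta^i|^2\,\rd\Ha^n\to 0$ and the boundary condition $\Ha^{n-1}(\partial L^i\cap B_{2R})\to 0$, combine via the first variation formula and Cauchy--Schwarz to force the first variation of $V$ to vanish in $B_R$. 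Passing the Lagrangian condition $\omega|_{L^i}=0$ to the varifold limit against smooth $2$-forms further makes the support of $V$ Lagrangian.

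Next I would encode the Lagrangian angle by restricting to its sublevel sets: for $y\in[-A_0,A_0]$, let $T_y^i$ denote the integer rectifiable current obtained by restricting $L^i$ to $\{\theta^i<y\}\cap B_{2R}$. The coarea formula and Cauchy--Schwarz give
\begin{equation*}
\int_{-A_0}^{A_0}\Ha^{n-1}(\{\theta^i=y\}\cap L^i\cap B_{2R})\,\rd y = \int_{L^i\cap B_{2R}}|\rd\theta^i|\,\rd\Ha^n \to 0,
\end{equation*}
so for almost every $y$ the boundary mass of $T_y^i$ in $B_R$ tends to zero, and along a further subsequence $T_y^i\to T_y$ as an integer rectifiable cycle in $B_R$. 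The family $y\mapsto T_y$ is monotone with total mass bounded by $A_0 R^n$, and its jumps occur at finitely many values $\ov\t_1<\ldots<\ov\t_N$, finiteness coming from the lower mass bound for nonzero stationary integer rectifiable currents via the monotonicity formula. Setting $C_j$ equal to the jump current at $\ov\t_j$, with underlying Radon measure $\mu_j$ and multiplicity $m_j$, then yields a decomposition of $V$ into integer rectifiable stationary Lagrangian cycles of constant Lagrangian angle $\ov\t_j$, which are special Lagrangian by the Harvey--Lawson calibration by $\Ree(e^{-i\ov\t_j}\Omega)$.

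The desired limit $\int_{L^i} f(\theta^i)\phi\,\rd\Ha^n \to \sum_j m_j f(\ov\t_j)\mu_j(\phi)$ then follows directly from the construction of $T_y$: it holds for step functions in $f$ by the slicing decomposition, and extends to general continuous $f$ by density and the bound $|\theta^i|\leq A_0$. The main obstacle will be the slicing step, i.e.~verifying that the subsequential limits $T_y$ are genuine stationary Lagrangian cycles for almost every $y$ and that only finitely many distinct jump values $\ov\t_j$ arise. Both points are forced by the coarea estimate above, the $L^2$-smallness of $\rd\theta^i$, and the uniform density lower bound for integer rectifiable stationary currents, exactly as in \cite[\S5]{Neves.ZM}; the remainder of the argument is routine varifold convergence.
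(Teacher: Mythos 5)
The paper does not reprove this proposition: it is quoted as Neves's compactness result \cite[Proposition 5.1]{Neves.ZM}, and your outline --- varifold limit made stationary by the $L^2$-smallness of $\rd\theta^i$ and the vanishing boundary measure, coarea slicing of the sublevel sets of $\theta^i$ to obtain a monotone family of cycles $T_y$, finiteness of the angle values via the density lower bound from the monotonicity formula, and identification of the pieces as special Lagrangian via the calibration $\Ree(e^{-i\ov\theta_j}\Omega)$ --- is essentially the argument of \cite[Section 5]{Neves.ZM}, so your proposal is correct in approach. Two points you gloss over but which are handled by the same tools: the density lower bound is also what excludes a diffuse (non-atomic) part of the limiting angle distribution, not merely infinitely many jumps, and the special Lagrangian conclusion is cleanest by Federer--Fleming convergence of the angle-localised currents together with $\Ree(e^{-i\ov\theta_j}\Omega)|_{L^i}=\cos(\theta^i-\ov\theta_j)\,\vol_{L^i}$, which simultaneously shows that no mass is lost to cancellation in the current limit.
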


We deduce from Proposition \ref{prop.compactness}, after a diagonalisation argument, the existence of special Lagrangian integral currents $C_j$ with Lagrangian angle $\ot_j$ for $j=1,\ldots, N$ so that \eqref{eq:blow-downA} holds for $s=a$.  We also note that \eqref{eq:asubs} implies that $\lambda|_{C_j}\equiv 0$ and hence, by \eqref{eq:magic.formulae}, the $C_j$ are cones.  Furthermore, arguing just as in \cite{Neves.ZM}, we have that \eqref{eq:asubs} in fact must now hold for all $s<0$. 

Finally, the fact that the set of angles does not depend on the blow-down sequence follows from the monotonicity formula \eqref{eq:monotonicity} with function $(\theta_t-y)^{2m}$ for $y\in\R$ and $m\in\bb{Z}$, together with the evolution equation \eqref{eq:theta.evol.2}, just as in \cite[p.~22--23]{Neves.ZM}.
\end{proof}

\subsection{Exact and almost calibrated structure theorem} We now extend our structure theory for blow-downs of ancient solutions to LMCF to the exact and almost calibrated setting.  This is the analogue of \cite[Theorem 4.2]{Neves.Survey}, which is a refinement of \cite[Theorem B]{Neves.ZM} (but with the same proof), and again the proof of the result here is almost the same.

In addition to our standing assumptions in subsection \ref{ss:assumptions}, we will further assume the flow $(L_t)_{-\infty<t<0}$ is exact, which means there exist functions $\beta_t:L_t\to\R$ such that
\begin{equation}\label{eq:exact.assumption}
\lambda_t=\lambda|_{L_t}=\rd\beta_t,
\end{equation}
and that there exists $\epsilon>0$ such that the Lagrangian angles $\t_t$ satisfy
\begin{equation}\label{eq:almost.calibrated.bound}
\liminf_{t\ra-\infty}\cos\t_t\geq\epsilon,
\end{equation}
which implies that $L_t$ is almost calibrated for all $t$ by \eqref{eq:theta.evol}.  Moreover, \eqref{eq:almost.calibrated.bound} also holds for the angles $\theta^i_s$ of the blow-down sequence $L^i_s$, and the analogue of \eqref{eq:exact.assumption} holds for some functions $\beta^i_s$.

\begin{remark}
The almost calibrated condition \eqref{eq:almost.calibrated} is preserved along LMCF but we are not guaranteed in general a uniform $\epsilon$ as in \eqref{eq:almost.calibrated.bound} for an ancient solution with $L_t$ almost calibrated for all $t$.
\end{remark}

We may view $L_t$ as the images of immersions $\iota_t:L\to\C^n$ for a fixed  $L$.   

\begin{theorem}\label{thm:blow-downB}
Let $(L_t)_{-\infty<t<0}$ be an exact and zero-Maslov ancient solution to LMCF in $\C^n$ satisfying \eqref{eq:bounds}.  If $n>2$, we further assume that the almost calibrated condition  \eqref{eq:almost.calibrated.bound} holds.
Theorem \ref{thm:blow-downA} applies so we use the notation of that result. 

For almost all $s_0$, if $\Si^i \subset L_{s_0}^i$ has $\partial \Si^i \cap B_{3R} = \emptyset$ and only one connected component of $\Si^i\cap B_{2R}$ intersects $B_R$ then, after passing to a subsequence, we can find $j\in\{1,\ldots,N\}$ and $m\leq m_j$ so that, for all $\phi\in C^{\infty}_c(B_R )$ and $f\in C^2(\bb{R})$, we have
\begin{equation*}
\underset{i\ra \infty} \lim \int_{\Si^i} f(\t^i_{s_0}) \phi\, \rd \Ha^n = m f(\ot_j)\mu_j(\phi).
\end{equation*}
\end{theorem}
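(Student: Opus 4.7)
The plan is to adapt the proof of \cite[Theorem B]{Neves.ZM} (refined in \cite[Theorem 4.2]{Neves.Survey}) from the blow-up to the blow-down setting, following the suggestion in the text preceding the statement. The central object is the function $u_t := \b_t + 2t\t_t$ on $L_t$, which by \eqref{eq:beta.evol.2} satisfies the heat equation $\ho u_t = 0$. Under the blow-down rescaling, the analogously defined $u^i_s := \b^i_s + 2s\t^i_s$ on $L^i_s$ still solves the heat equation.

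First I would apply Huisken's monotonicity formula \eqref{eq:monotonicity} to $(u^i_s)^2$ and use that $\ho(u^i_s)^2 = -2|\n u^i_s|^2$ to produce a backwards-heat-kernel-weighted integral bound on $|\n u^i_s|^2$. Combined with Lemma \ref{lem:integral.convergence} and a diagonalisation over the heat-kernel base point, this yields, for almost every $s_0 < 0$ and after passing to a subsequence,
\[
\int_{L^i_{s_0} \cap B_{3R}} |\n u^i_{s_0}|^2\,\rd\Ha^n \longrightarrow 0.
\]
Fixing such an $s_0$ and taking $\Si^i$ as in the statement, I would cut off outside $B_{2R}$ and apply a Poincar\'e-type inequality on the distinguished connected component of $\Si^i \cap B_{2R}$, using the uniform area-ratio bound in \eqref{eq:bounds} together with the boundary hypothesis $\partial \Si^i \cap B_{3R} = \emptyset$. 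This upgrades the $L^2$-smallness of $\n u^i_{s_0}$ to smallness of the $L^2$-oscillation of $u^i_{s_0}$ on $\Si^i \cap B_R$.

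Next I would identify the limit. By Theorem \ref{thm:blow-downA} every limiting cone $C_j$ satisfies $\l|_{C_j} = 0$ and has constant Lagrangian angle $\ot_j$, so any weak limit of the sequence $u^i_{s_0}$, regarded as a function on the support of the limit measure, is locally constant with value $2s_0\ot_j + c_j$ on each $C_j$. The $L^2$-oscillation bound then forces a single value of $\ot_j$ to be selected on $\Si^i \cap B_R$ in the limit, so only one cone $C_j$ can appear in the support of $\lim_{i\to\infty} \Si^i$. A further application of Theorem \ref{thm:blow-downA} to the sequence $\Si^i$ itself then gives the conclusion with some multiplicity $m \leq m_j$.

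The main obstacle is the rigidity step ruling out that $\Si^i$ accumulates mass on two cones $C_{j_1}, C_{j_2}$ with $\ot_{j_1} \neq \ot_{j_2}$, since such accumulation would force $u^i_{s_0}$ to oscillate between values close to $2s_0\ot_{j_1}$ and $2s_0\ot_{j_2}$ on a set of positive measure and hence contradict the $L^2$-oscillation bound. Making this contradiction quantitative and uniform requires good control on the primitives $\b^i_{s_0}$, and this is exactly where the almost calibrated hypothesis \eqref{eq:almost.calibrated.bound} enters when $n > 2$; for $n = 2$ exactness alone suffices, and one may also invoke the complex-curve description from Lemma \ref{lem:SL.cplx}.
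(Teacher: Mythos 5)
Your reduction of the problem is the right one and follows the paper's strategy up to a point: Lemma \ref{lem:integral.convergence} gives the space--time smallness of $|\lambda^i_s|^2+|\rd\theta^i_s|^2$, the isoperimetric estimate of Lemma \ref{lem:useful.area} (this, not control of primitives, is where the almost calibrated hypothesis enters for $n>2$, and where small Dirichlet energy of $\theta$ enters for $n=2$) feeds the Poincar\'e-type Lemma \ref{lem:Poincare}, and one concludes that $\beta^i_{s_0}$, equivalently $u^i_{s_0}=\beta^i_{s_0}+2s_0\theta^i_{s_0}$, is nearly constant on the distinguished component of $\Si^i\cap B_{2R}$. The genuine gap is in the step you yourself flag as the ``main obstacle'': near-constancy of $u_{s_0}$ only pins down the value of the \emph{combination} $c_j+2s_0\ot_j$, where $c_j$ is the limiting constant of $\beta^i_{s_0}$ on the cone $C_j$. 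Two cones $C_{j_1}\neq C_{j_2}$ with $\ot_{j_1}\neq\ot_{j_2}$ may perfectly well satisfy $c_{j_1}+2s_0\ot_{j_1}=c_{j_2}+2s_0\ot_{j_2}$, in which case $u^i_{s_0}$ does not oscillate at all and no contradiction arises; your claim that mass on two cones forces oscillation between $2s_0\ot_{j_1}$ and $2s_0\ot_{j_2}$ silently drops the constants $c_j$, and no amount of the almost calibrated condition controls them. Nor can you restore the argument by genericity in $s_0$ alone, because the constants $c_j=c_j(s_0)$ are produced by a limit taken at time $s_0$ itself, so you have no control on how they vary with $s_0$.

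The paper's proof resolves exactly this degeneracy by a two-time argument. One fixes the reference time $s=-1$, where Lemmas \ref{lem:Poincare} and Proposition \ref{prop.compactness} decompose the blow-down according to \emph{distinct pairs} $(\ov\beta_j,\ot_j)$, which are fixed numbers independent of $s_0$; since the pairs are distinct, for almost every $s_0$ the values $\ov\beta_j-2(s_0+1)\ot_j$ are pairwise distinct (this is precisely where ``for almost all $s_0$'' is used). Then the caloric function $\beta_s+2(s-s_0)\theta_s$, via \eqref{eq:beta.evol.2} and the monotonicity formula, transports the near-constant value $\ov\beta$ of $\beta^i_{s_0}$ on $\Si^i$ back to time $-1$ (equation \eqref{eq:s0.-1.limit}), where a choice of test functions $f$, $\phi$ forces $\ov\beta=\ov\beta_j-2(s_0+1)\ot_j$ for a unique $j$, and hence $\Si=m\widehat C_j$ with $m\leq\widehat m_j$. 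Your proposal, working only at the single time $s_0$, is missing this mechanism; it also omits the (smaller but necessary) verification that the limit $\Si$ of $\Si^i$ has $\Ha^n(\Si)>0$, which the paper obtains from upper semicontinuity of Gaussian density and the monotonicity formula, and which is needed for the selection argument to bite. The aside about Lemma \ref{lem:SL.cplx} for $n=2$ plays no role in this step.
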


\begin{proof} 
Applying Lemma \ref{lem:integral.convergence}, we have that \eqref{eq:asubs} holds for $a=-1$ and $a=s_0$ without loss of generality.  This allows us to apply the following lemma (\cite[Lemma 3.10]{Neves.Survey}) to $L^i_{-1}$ and $\Si^i$ for all $i$ sufficiently large. 
\begin{lem}\label{lem:useful.area}
Suppose $L$ is a smooth Lagrangian in $\C^n$ with $\partial L\cap B_R=\emptyset$ and either $n=2$ and for some sufficient small $\delta>0$ we have, 
\begin{equation*}
\int_{L\cap B_R} |\rd\theta|^2 \rd\Ha^2 < \delta,
\end{equation*}
or $n>2$ and there exists $\epsilon>0$ so that
  \begin{equation*}
  \inf_{L\cap B_R}\cos\theta\geq\epsilon.
\end{equation*}   
There exists a constant $A_1=A_1(n,\e)>0$  such that
 \[\left(\Ha^n(U)\right)^\frac{n-1}{n} \leq A_1 \Ha^{n-1} (\partial U) ,\]
 for all open $U\subseteq L\cap B_R$ with rectifiable boundary.
\end{lem}  

\begin{remark}
The constant $A_1$ given by the proof of Lemma \ref{lem:useful.area} tends to infinity as $\e\ra 0$.  This motivates the need for the assumption \eqref{eq:almost.calibrated.bound}.
\end{remark}

Let $R_k>0$ be a monotonic sequence with $R_k\to\infty$. Applying Proposition \ref{prop.compactness} to $L^i_{-1}\cap B_{4R_k}$, as in the proof of \cite[Lemma 7.2]{Neves.ZM}, we deduce the convergence of connected components of $L^i_{-1}\cap B_{4R_k}$ intersecting $B_{R_k}$ to a union of special Lagrangian cones in $B_{2R_k}$.   By \eqref{eq:magic.formulae}, we get uniform bounds on $|\rd\beta^i_{-1}|$ and $|\rd\theta^i_{-1}|$.

Given these bounds and Lemma \ref{lem:useful.area},  we may apply a Poincar\'e-type Lemma \cite[Lemma 3.7]{Neves.Survey} to $L^i_{-1}$ with functions $\beta^i_{-1}$ and $\theta^i_{-1}$, which we give in our required form.
\begin{lem}\label{lem:Poincare}
Let $(L^i)$ and $(\a^i)$ be a sequence of smooth Lagrangians in $\bb{C}^n$ and smooth functions on $L^i$ such that for some $R>0$ the following holds: $L^i \cap B_{2R}$ is connected and $\partial(L^i \cap B_{3R})\subset B_{3R}(0)$ and there exists a constant $A_1>0$ so that
\begin{equation*}
\Ha^n(L^i\cap B_{3R})\leq A_1R^n
\quad\text{and}\quad
\left(\Ha^n(U)\right)^\frac{n-1}{n}\leq A_1 \Ha^{n-1}(\partial U)
\end{equation*}
 for all open $U\subseteq L^i\cap B_{3R}(0)$ with rectifiable boundary and all $i\in\bb{N}$;
 \begin{equation*}
 \sup_{L^i\cap B_{3R}}|\nabla\a^i|+R^{-1}\sup_{L^i\cap B_{3R}}|\a^i|\leq A_1 \; \text{ and }\;
\underset{i\ra \infty} \lim \int_{L^i\cap B_{3R}}|\n \a^i|^2 \rd\Ha^n =0.
\end{equation*}
Then, there exists $\ov\a\in\bb{R}$ such that after passing to a subsequence, we have
\begin{equation*}
\lim_{i\ra\infty}\sup_{x\in L^i\cap B_R}|\a^i(x)-\ov\a| =0.
\end{equation*}
\end{lem}

Lemma \ref{lem:Poincare} and Proposition \ref{prop.compactness} then imply there exist $\widehat{N}$, distinct pairs $(\ov\b_j,\ov\t_j)$ and special Lagrangian cones $\widehat{C}_j$ with Lagrangian angle $\ov\t_j$ for $j=1\ldots,\widehat{N}$ so that 
\begin{equation}\underset{i\ra\infty} \lim \int_{L_{-1}^i} f(\b^{i}_{-1} - 2(s_0+1) \t^i_{-1}) \phi\, \rd \Ha^n = \sum_{j=1}^{\widehat{N}} \widehat{m}_jf(\ov\b_{j} - 2(s_0+1) \ot_{j}) \widehat{\mu}_j(\phi) 
 \label{eq:-1.limit}
\end{equation}
for all $\phi\in C^{\infty}_c(\C^n)$ and $f\in C^2(\R)$, where $\widehat{m}_j$ and $\widehat{\mu}_j$ are the multiplicity and Radon measures of the support of $\widehat{C}_j$.
  We may further assume that the numbers $\ov\b_j-2(s_0+1)\ot_j$ are distinct.

Applying Proposition \ref{prop.compactness} to $\Si^i\cap B_{2R}$, we obtain a stationary (but a priori not special) Lagrangian cone $\Si$ with Radon measure $\mu_\Si$ so that $\Si^i$ converges weakly to $\Si$ on $B_R$.  We now argue that $\mathcal{H}^n(\Si)>0$.   There exists a sequence $x_i \in \Si^i\cap B_R$, which we may assume converges to some $x\in \ov B_R$.  Due to upper semicontinuity, the Gaussian density ratios of $\Sigma$ satisfy $\Theta(x,l)\geq 1$. Since $\Sigma$ may be considered as a stationary Brakke flow, we see that for some $\phi\in C^{\infty}_c(B_R(x))$, we have
\begin{equation*}
\int_\Si \Phi(x,1) \phi\, \rd\Ha^n>\frac{1}{2}
\end{equation*}
 by the monotonicity formula. Thus $\Ha^n(\Si)>0$ as claimed.
 
As $\Si^i\cap B_{2R}$ converges to a stationary cone, Lemmas  \ref{lem:integral.convergence}, \ref{lem:useful.area} and \ref{lem:Poincare} imply the existence of a constant $\ov\b$ so that 
\begin{equation*}
\lim_{i\ra\infty}\sup_{x\in \Si^i\cap B_R}|\beta^i_s(x)-\ov\b| =0.
\end{equation*}
We now let $\phi\in C^{\infty}_c(B_R)$ be nonnegative such that
\begin{equation}\label{eq:mu.positive}
\mu_\Si(\phi)=\int_\Si \phi\,\rd\mathcal{H}^n>0
\end{equation}
(this is possible since $\Ha^n(\Si)>0$) and let $f\in C^2_c(\R)$ be  such that $f:\R\to[0,1]$, 
\begin{equation}\label{eq:f.condns}
f(\ov\b)=1\quad\text{and}\quad f(\ov\beta_j-2(s_0+1)\ot_j)>0
\end{equation}
for at most one $j\in\{1,\ldots,\widehat{N}\}$.

From Lemma \ref{lem:integral.convergence} and the evolution  for $\beta^i_s+2(s-s_0)\t^i_s$ from \eqref{eq:beta.evol.2}, we see that
\begin{equation}\label{eq:s0.-1.limit}
\underset{i\ra \infty}\lim \int_{L_{s_0}^i} \b^i_{s_0} \phi \,\rd\Ha^n = \underset{i\ra \infty}\lim \int_{L_{-1}^i} \big(\b^i_{-1}-2(s_0+1)\t^i_{-1}\big)\phi d\Ha^n.
\end{equation}
We deduce from \eqref{eq:-1.limit}, \eqref{eq:mu.positive}, \eqref{eq:f.condns} and \eqref{eq:s0.-1.limit} that
\begin{align}\nonumber
0<\mu_\Si(\phi)&=\int_{\Si}f(\ov\b)\phi\,\rd\Ha^n=\underset{i\ra \infty}\lim \int_{\Si^i} f(\b^i_{s_0}) \phi \,\rd\Ha^n\leq
\underset{i\ra \infty}\lim \int_{L_{s_0}^i} f(\b^i_{s_0}) \phi \,\rd\Ha^n\\
& = \sum_{k=1}^{\widehat{N}} \widehat{m}_k f(\ov\b_k -2(s_0+1)\ot_k)\widehat{\mu}_k(\phi)=\widehat{m}_j f(\ov\b_j -2(s_0+1)\ot_j)\widehat{\mu}_j(\phi)
.\nonumber
\end{align}
By varying $f$ and $\phi$, we deduce that $\ov\b=\ov\b_j-2(s_0+1)\ot_j$ for some unique $j$ and hence $\Si=m \widehat{C}_j$ with $m\leq\widehat{m}_j$.  Moreover, since $\Si$ arises from a limit of $\Si^i\subseteq L^i_{s_0}$, we can relate $\widehat{C}_j$ and $\widehat{m}_j$ to some $C_j$ and $m_j$ as given by Theorem \ref{thm:blow-downA}. 
\end{proof}

\section{Stationary ancient solutions}\label{sect:stationary}

We show that for certain blow-downs to occur for a class of zero-Maslov ancient solutions to LMCF in $\C^n$, the flow must in fact be stationary.

\subsection{Planar blow-downs} We have the following analogue of \cite[Corollary 4.3]{Neves.Survey}.

\begin{prop}\label{prop:planar.blow-down}
Let $(L_t)_{-\infty<t<0}$ be an exact, zero-Maslov ancient solution to LMCF in $\C^n$ satisfying \eqref{eq:bounds}.  If $n>2$, we also assume the almost calibrated condition \eqref{eq:almost.calibrated.bound}.  
Suppose that a blow-down (given by Theorem \ref{thm:blow-downA}) is a union of multiplicity one planes $P_j$, which pairwise intersect transversely, and which have distinct Lagrangian angles $\ov{\t}_j$ for $j=1,\ldots,N$. Then $L_t = \cup_{j=1}^N P_j$ for all $t<0$.
\end{prop}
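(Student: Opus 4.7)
The plan is to combine the local structure provided by Theorem \ref{thm:blow-downB} with monotonicity of Gaussian-weighted even powers of the Lagrangian angle in order to force $L_t$ to coincide with $\cup_j P_j$ for every $t<0$. First I would apply Theorem \ref{thm:blow-downB} at almost every $s_0 \in (-\infty,0)$: since each cone in the blow-down is a plane $P_j$ of multiplicity exactly one with distinct Lagrangian angle $\bar\theta_j$, any connected component of $L^i_{s_0}\cap B_{2R}$ intersecting $B_R$ converges along a subsequence to a single $P_j$ as a multiplicity-one graph, with Lagrangian angle tending to $\bar\theta_j$. This identifies the blow-down sequence locally as $N$ graphical sheets asymptoting to $\cup_j P_j$.

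Next I would use the monotonicity formula \eqref{eq:monotonicity} with $f_t=(\theta_t-y)^{2m}$: by \eqref{eq:theta.evol.2} we have $\ho(\theta_t-y)^{2m}\leq 0$, so for every $y\in\R$, $m\in\bb{N}$, $x_0\in\C^n$ and $l>t_0$ the quantity
\[
I^{y,m}_{x_0,l}(t) := \int_{L_t}(\theta_t-y)^{2m}\,\Phi_{(x_0,l)}(x,t)\,\rd\Ha^n
\]
is non-increasing in $t$. A parabolic rescaling of $L_t$ by $\sqrt{l-t}$ identifies the limit as $t\to-\infty$ with an integral over the blow-down $L^\infty_{-1}=\cup_j P_j$ against a unit-scale Gaussian centred at the origin; since each $P_j$ passes through the origin with Gaussian density one, this limit equals $\sum_{j=1}^N(\bar\theta_j-y)^{2m}$, independent of the choice of $(x_0,l)$. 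Monotonicity therefore gives the universal bound $I^{y,m}_{x_0,l}(t_0)\leq\sum_j(\bar\theta_j-y)^{2m}$.

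The main obstacle will be extracting from this family of integral inequalities the pointwise conclusion $\theta_{t_0}\in\{\bar\theta_j\}$ on $L_{t_0}$, since letting $l\searrow t_0$ to concentrate $\Phi$ at a smooth point $x_0$ and taking $2m$-th roots only yields $\theta_{t_0}(x)\in[\bar\theta_1,\bar\theta_N]$. To sharpen this I would exploit the sheet decomposition from the first step applied at time $t_0$: for a point $x_0\in P_j$ chosen far from $\bigcup_{k\neq j}(P_j\cap P_k)$ and $l$ slightly larger than $t_0$, the Gaussian $\Phi_{(x_0,l)}$ is effectively supported on a single sheet of $L_{t_0}$ above $P_j$. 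The inequality with $y=\bar\theta_j$ as $m\to\infty$, combined with matching contributions from the other sheets controlled via the monotonicity formula for the full Gaussian density (which must equal the blow-down density in the limit), should then force $\theta_{t_0}\equiv \bar\theta_j$ on that sheet. Varying $x_0$ along each $P_j$ yields that $\theta_{t_0}$ is locally constant on each connected component of $L_{t_0}$, taking one of the values $\bar\theta_j$.

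Once this is known, Lemma \ref{lem:magic.facts}(a) gives that $L_{t_0}$ is stationary, so each connected component $L^{(j)}_{t_0}$ is a smooth special Lagrangian with constant angle $\bar\theta_j$ whose tangent cone at infinity is the multiplicity-one plane $P_j$. Monotonicity of density for minimal submanifolds then forces the density ratio $\leq 1$ everywhere on $L^{(j)}_{t_0}$; combined with density $\geq 1$ at regular points and the equality case of the monotonicity formula, $L^{(j)}_{t_0}$ must be a cone through every one of its points, and hence coincides with the plane $P_j$. Therefore $L_{t_0}=\cup_j P_j$ for every $t_0<0$.
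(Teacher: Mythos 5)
Your opening reduction (monotonicity of $\int_{L_t}(\theta_t-y)^{2m}\Phi_{(x_0,l)}$ together with \eqref{eq:theta.evol.2}, and the computation of the $t\to-\infty$ limit as $\sum_j(\bar\theta_j-y)^{2m}$) is fine, and you correctly concede that it only yields $\theta_{t_0}\in[\min_j\bar\theta_j,\max_j\bar\theta_j]$. The gap is in the step meant to sharpen this. First, the ``sheet decomposition'' you invoke at time $t_0$ does not exist at this stage of the argument: Theorem \ref{thm:blow-downB} controls the \emph{rescaled} flows $L^i_{s_0}$, and only in the sense of Radon measures; it says nothing about $L_{t_0}$ itself being graphical over $\cup_j P_j$ near a fixed point $x_0\in P_j$ (a priori $L_{t_0}$ need not be close to the cone anywhere at unit scale — that $L_{t_0}$ is close to the planes is essentially the conclusion being proved). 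Graphicality requires a curvature bound, which in the paper comes from White's regularity theorem and a Gaussian density estimate. Second, and more fundamentally, your monotone quantity cannot be localized to one plane by choosing $x_0$ far from the intersections: the limit as $t\to-\infty$ is computed after parabolic rescaling by $\sqrt{l-t}\to\infty$, so the rescaled centre $x_0/\sqrt{l-t}$ tends to the origin and \emph{all} $N$ planes contribute $(\bar\theta_j-y)^{2m}$, for every fixed $x_0$. To see only one plane you would need $|x_0|$ comparable to $\sqrt{l-t}$, i.e.\ points escaping to infinity, which then give no pointwise information about $\theta_{t_0}$ at fixed locations. A decisive check: your argument never uses transversality of the planes in an essential way, yet the statement is false without it — the Joyce--Lee--Tsui translators of Example \ref{ex:translators} are exact, almost calibrated, eternal solutions with non-constant Lagrangian angle whose blow-down is two multiplicity one planes with \emph{distinct} angles meeting along a line (see the remark following the proof of Theorem \ref{thm:Lawlor}). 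So any proof must use transversality at a crucial point, and yours does not.

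For comparison, the paper's route is different and is where transversality enters: for the rescaled flows $L^i_s$ and points $|x_0|\geq R_0/2$, the transversality of $P_1,\dots,P_N$ makes the Gaussian density of the blow-down close to $1$ (Lemma \ref{lem:density.1}), so by \eqref{eq:Theta.dec} and Radon measure convergence one gets $\Theta^i_s(x_0,l)\leq 1+\epsilon_0$ and hence, via White's regularity theorem, uniform $C^{2,\alpha}$ bounds and a graphical decomposition of $L^i_s$ over the planes in an annulus. Theorem \ref{thm:blow-downB}, an isoperimetric/mass count and a clearing-out argument then show $L^i_s\cap B_K$ has exactly $N$ components, each converging to one $P_j$ with multiplicity one, whence a localized White-type estimate gives $|A^i_s|\leq C$ on $B_{R_0/2}$; undoing the rescaling yields $|A_t|\leq C/\sigma_i\to 0$, so $A_t\equiv 0$ and $L_t=\cup_j P_j$. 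If you want to salvage your scheme, the angle argument can at best be a final step after one already knows $L_{t_0}$ is (locally) graphical over the planes with multiplicity one at the relevant scales — but establishing that is exactly the content of the paper's density/regularity argument.
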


Since the proof of \cite[Corollary 4.3]{Neves.Survey} is only sketched, we provide the full proof of Proposition \ref{prop:planar.blow-down} in the case $n=N=2$ as the general case is proved in an entirely similar manner.  The idea is to use Theorem \ref{thm:blow-downB}, together with the non-existence of compact embedded exact Lagrangians in $\C^n$, to show that the intersection of the blow-down sequence with a large fixed ball eventually consists of exactly two components.  White's regularity theorem then yields a uniform curvature bound for the blow-down sequence in this ball.  Rescaling this curvature estimate forces the flow to be totally geodesic, and hence it must equal the union of planes. 

\begin{proof}[Proof of Proposition \ref{prop:planar.blow-down}]
For simplicity, we assume that $n=N=2$.  We begin with an elementary observation.

\begin{lem}\label{lem:density.1}
For all $\e_0>0$ there exists $R_0>0$ such that 
\begin{equation*}
\int_{P_1 +P_2} \Phi(x_0, l)(x,0)\,\rd\Ha^2\leq 1+ \frac{\e_0}{2} \quad \forall\; 0< l \leq 4\;\text{ and }\; |x_0|\geq \frac{R_0}{2}  .
\end{equation*}
 \end{lem}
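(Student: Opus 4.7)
The approach is straightforward: compute the Gaussian integral over each plane in closed form, then use the transversality of $P_1$ and $P_2$ to control the result for $x_0$ far from the origin. Since both planes are multiplicity one, and the integral splits, the only task is to bound
\[
\int_{P_1}\Phi_{(x_0,l)}(x,0)\,\rd\Ha^n + \int_{P_2}\Phi_{(x_0,l)}(x,0)\,\rd\Ha^n.
\]

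First I would write down the explicit formula for the density of a multiplicity-one Lagrangian $n$-plane through $0$. Splitting $x_0 = x_0^\parallel + x_0^\perp$ orthogonally with respect to $P_j$ and performing the resulting Gaussian integration over $P_j$ (which evaluates to $(4\pi l)^{n/2}$) gives
\[
\int_{P_j}\Phi_{(x_0,l)}(x,0)\,\rd\Ha^n = \exp\!\left(-\frac{d_j^2}{4l}\right) \leq 1, \qquad d_j := \mathrm{dist}(x_0,P_j).
\]
So it suffices to prove that for $|x_0|\geq R_0/2$ and $l\in(0,4]$ at least one of the exponentials is as small as we like.

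Next I would use transversality. Since $P_1$ and $P_2$ are $n$-planes through $0$ in $\C^n$ meeting transversely, $P_1\cap P_2 = \{0\}$, so by compactness of the unit sphere in $P_1$ there exists $c'>0$, depending only on the angle between the planes, with $\mathrm{dist}(p,P_2)\geq c'|p|$ for every $p\in P_1$. Letting $p$ be the orthogonal projection of $x_0$ onto $P_1$, the triangle inequality gives $\mathrm{dist}(p,P_2)\leq d_1+d_2$, hence $|p|\leq (d_1+d_2)/c'$, and therefore
\[
|x_0| \leq |p|+d_1 \leq c\,(d_1+d_2),
\]
for some $c=c(P_1,P_2)>0$. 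In particular $\max(d_1,d_2)\geq |x_0|/(2c)$, so one of the two exponentials is at most $\exp\bigl(-|x_0|^2/(16c^2 l)\bigr)$ while the other is trivially at most $1$. For $l\leq 4$ and $|x_0|\geq R_0/2$ this yields
\[
\int_{P_1+P_2}\Phi_{(x_0,l)}(x,0)\,\rd\Ha^n \leq 1 + \exp\!\left(-\frac{R_0^2}{256\,c^2}\right),
\]
and choosing $R_0$ large enough makes the last term at most $\e_0/2$.

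I do not anticipate any serious obstacle: the argument is just an explicit Gaussian computation followed by an elementary transversality estimate, with all constants depending only on the fixed pair of planes $P_1\cup P_2$.
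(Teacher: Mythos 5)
Your proposal is correct and follows essentially the same route as the paper: the paper also reduces each planar integral to the closed form $e^{-a_j^2/4l}$ with $a_j=d(P_j,x_0)$ and then concludes using $l\leq 4$ and the largeness of $|x_0|$. The only difference is that you spell out the elementary transversality estimate $\max(d_1,d_2)\geq c^{-1}|x_0|$, which the paper leaves implicit.
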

\begin{proof}
Let $a_j= d(P_j,x_0)$.  Then 
 \begin{align}
\frac{1}{4\pi l}\int_{P_1} e^{-\frac{|x-x_0|^2}{4l}}\,\rd\Ha^2&+\frac{1}{4\pi l}\int_{P_2} e^{-\frac{|x-x_0|^2}{4l}}\,\rd\Ha^2\nonumber\\
  &=e^{\frac{-a_1^2}{4l}}\frac{1}{4\pi l}\int_{P_1} e^\frac{-|y|^2}{4l}\,\rd y+e^{-\frac{a_2^2}{4l}}\frac{1}{4\pi l}\int_{P_2} e^{-\frac{|y|^2}{4l}}\rd y\nonumber\\
  &=e^{-\frac{a_1^2}{4l}}+e^{-\frac{a_2^2}{4l}}.
  \label{eq:density.est}
 \end{align}
Since $l\leq 4$, the result follows.
\end{proof}

By \eqref{eq:Theta.dec},  for $-2\leq s<0$ and $0\leq l\leq 2$  
the Gaussian density ratios of $L^i_s$ satisfy
\begin{equation*}
\T_s^i(x_0,l) \leq \T_{-2}^i(x_0,l+2+s).
\end{equation*}
Thus, Radon measure convergence and Lemma \ref{lem:density.1} imply that 
for any $\e_0>0$ there is $R_0>0$ such that for all $|x_0|>\frac{R_0}{2}$ and $i$ sufficiently large, $\T_s^i(x_0,l) \leq 1+\e_0$. 

We may therefore apply White's regularity theorem \cite{White.Local.Reg} to obtain that (for any fixed $K>R_0$) in the annulus $\ov{A(R_0, K)}:=\ov B_K(0)\setminus B_{R_0}(0)$, the $C^{2,\alpha}$ norm of $L_s^i$ is uniformly bounded for all $s\in[-1,0)$. We now show that, in this annulus, $L_s^i$ has two components which are graphs over $P_1$ and $P_2$ for $i$ large.

\begin{lem}\label{lem.annulus.components} On $\ov{A(R_0,K)}$, for $i$ sufficiently large and $s\in[-1,0)$, a subsequence of $L^i_s\cap \ov{A(R_0,K)}$ may be decomposed into two connected components, $\Si_{1,s}^i$ and $\Si_{2,s}^i$ which are graphical over $P_1 \cap A(R_0,K)$ and $P_2 \cap A(R_0,K)$ respectively.
\end{lem}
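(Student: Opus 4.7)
The strategy is to promote the weak (Radon measure) convergence $L^i_s \to P_1 + P_2$ to smooth multiplicity-one convergence on the closed annulus, and then exploit that $P_1$ and $P_2$ are disjoint away from the origin to decompose $L^i_s \cap \overline{A(R_0,K)}$ into the two claimed graphical pieces.

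First, the uniform $C^{2,\alpha}$ spatial bound on $L^i_s$ already furnished by White's regularity theorem in the preceding paragraph, combined with the LMCF equation and standard parabolic Schauder bootstrapping, upgrades to uniform $C^k$ bounds in space-time on any slightly shrunken annulus $\overline{A(R_0',K')}$ (with $R_0 < R_0' < K' < K$) for all $s \in [-1,0)$. Passing to a subsequence, $L^i_s$ converges smoothly on $\overline{A(R_0',K')} \times [-1,0)$. The Radon measure convergence at each $s$ identifies the smooth limit as the static flow $P_1 \cup P_2$, with multiplicity one on each sheet (a consequence of the multiplicity-one assumption in the statement of Proposition~\ref{prop:planar.blow-down}).

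Next, set $\eta := \mathrm{dist}(P_1 \cap \overline{A(R_0',K')},\, P_2 \cap \overline{A(R_0',K')}) > 0$, which is positive because $P_1$ and $P_2$ are transverse and so meet only at $0\notin \overline{A(R_0',K')}$. Smooth multiplicity-one convergence forces, for $i$ large and uniformly in $s$, that $L^i_s \cap \overline{A(R_0',K')}$ lies in a tubular neighbourhood of $P_1 \cup P_2$ of width much less than $\eta/3$, with tangent planes uniformly $C^1$-close to those of the respective plane. This partitions $L^i_s \cap \overline{A(R_0',K')}$ into two pieces $\Sigma_{j,s}^i$ by proximity to $P_j$, each writeable as a normal graph over its image in $P_j$. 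Multiplicity one prevents several sheets from sitting over the same piece of $P_j$, while connectedness of $P_j \cap A(R_0',K')$ (an annulus when $K' > R_0'$) guarantees that each $\Sigma_{j,s}^i$ is itself connected and graphical over all of $P_j \cap A(R_0',K')$; relabelling $R_0, K$ yields the lemma.

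The main obstacle is ensuring uniformity in $s \in [-1,0)$ up to the singular time. This is handled automatically by the uniformity in $s$ of the Gaussian density bound $\Theta^i_s(x_0,l) \leq 1+\epsilon_0$ underpinning White's theorem, together with parabolic regularity for LMCF which propagates this uniformity to all higher derivatives without deterioration as $s\to 0^-$, so the smooth convergence can be taken on the full half-open interval.
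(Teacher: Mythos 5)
Your argument is correct and rests on the same two pillars as the paper's proof: the uniform curvature bounds coming from White's regularity theorem on the annulus, and the Radon measure convergence to the \emph{multiplicity one} planes, which together force $L^i_s\cap \ov{A(R_0,K)}$ into thin disjoint tubes about $P_1$ and $P_2$ and rule out more than one graphical sheet over each plane. The technical route differs slightly: you pass through parabolic bootstrapping and smooth space--time subconvergence of the flows to the static limit $P_1\cup P_2$ and then read off the decomposition, whereas the paper works slice-wise at each fixed $s$ using only the spatial $C^{2,\alpha}$ bound. Concretely, the paper first shows any point of $L^i_s$ in the annulus at definite distance from $P_1\cup P_2$ would carry a definite amount of area (via a graphical neighbourhood of uniform size), contradicting measure convergence; then shows graphicality over $P_j$ by noting a tangent vector orthogonal to $P_j$ would force the surface out of the $\e$-tube; and finally upgrades $C^0$-smallness plus the $C^{2,\alpha}$ bound to $C^2$ convergence of the sheets via Ehrling's lemma, with multiplicity one giving a single sheet. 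Your version buys a cleaner conceptual picture at the cost of heavier (though standard) machinery and the need to shrink the annulus and to be slightly careful about uniformity of the smooth convergence as $s\to 0^-$ and about why the graphs cover all of $P_j\cap A(R_0,K)$ (an open-and-closed argument on the connected planar annulus, which you invoke implicitly); the paper's slice-wise argument needs neither parabolic estimates nor space--time compactness, and its uniformity in $s$ comes solely from the $s$-uniform Gaussian density bound. Neither difference amounts to a gap.
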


\begin{proof}
  Suppose first that there exists a sequence of points $x^i\in L_s^i\cap\ov{A(R_0,K)}$ such that $d(P_1\cup P_2,x^i)>3\e$ for some $\e>0$. For $j=1,2$, let 
\begin{equation*}
S_j^\e: = \{ x\in \ov{A(R_0,K)} |\ d(x,P_j)<\e\}.
\end{equation*} Due to the uniform $C^{2, \a}$ estimates on $L^i_s\cap \ov{A(R_0,K)}$, there is a uniform (in $i$) neighbourhood of $x^i$ in $L_s^i$ that may be written graphically over $T_{x^i}L^i_s$ and is disjoint from a $2\e$-tubular neighbourhood of $P_1 \cup P_2$.  Hence, there is some $c>0$ so that 
\begin{equation*}
\Ha^2(L_s^i\setminus(S_1^\e\cup S_2^\e))>c,
\end{equation*} a contradiction to Radon measure convergence. Therefore for any $\e>0$, choosing $i$ large enough, $L^i_s\cap \ov{A(R_0,K)}$ is contained in the disjoint sets $S_1^\e$ and $S_2^\e$. 

Suppose for a contradiction that for small $\e>0$ to be determined (and depending only on the $C^{2,\alpha}$ estimate), $i$ has been chosen large enough so that $L_s^i \cap S_j^\e\neq\emptyset$ but $L_s^i \cap S_j^\e$ is not graphical over $P_1$. Then, there exists $x\in L_s^i \cap S_j^\e$ and a unit vector $v \in T_xL_s^i$ with $v\perp P_j$. The uniform $C^{2, \a}$ bound implies that $L_s^i$ may be written graphically over a ball of radius $r$ in $T_x L_s^i$ (where $r$ depends only on the $C^{2,\a}$ estimate). However, choosing $\e = \frac{r}{2}$, we see that in the $v$ direction $L_s^i$ leaves $S_j^\e$, a contradiction. 

Hence, $L_s^i \cap S_j^\e$ is graphical over $P_j$ and the $C^0$ norm of the graph converges to zero as $i\ra \infty$. Therefore, using Ehrling's Lemma and the uniform $C^{2, \alpha}$ bound we see that $L_s^i \cap S_j^\e$ is made up of $k$ graphical ``sheets'' that converge in $C^2$ to the plane $P_j$.  Due to Radon measure convergence to multiplicity one planes we see that there must exist a subsequence of the $L^i_s\cap S_j^\e$ which is a graph over $P_j$; that is, $k=1$. 

Setting $\Si_{j, s}^i = L_s^i \cap S_j^\e$ gives the result.
\end{proof}

We now show that $L^i_s$ has two connected components in the ball, as well as in the annulus.

\begin{lem}\label{lem:component.convergence}
There is a subsequence of $L^i_s$, such that for $i$ large enough there are exactly two connected components of $L^i_s \cap B_K$, $\Si_{1,s}^i$ and $\Si_{2,s}^i$, where $\Si_{j,s}^i$ converges to $P_j\cap B_K$ for almost all $s \in [-1,0)$ in the sense of Radon measures.
\end{lem}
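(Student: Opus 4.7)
My plan is to identify the two claimed components as extensions of the graphical pieces $\Sigma^i_{j,s}$ from Lemma~\ref{lem.annulus.components}, to rule out further components via the non-existence of closed exact Lagrangians in $\C^n$, and to separate the two extensions via Theorem~\ref{thm:blow-downB}. Concretely, for $i$ in the subsequence produced by Lemma~\ref{lem.annulus.components} and $s\in[-1,0)$, I would let $\tilde\Sigma^i_{j,s}$ denote the connected component of $L^i_s\cap B_K$ containing $\Sigma^i_{j,s}\subset\overline{A(R_0,K)}$ for $j=1,2$. Suppose $\Upsilon^i$ is any connected component of $L^i_s\cap B_K$ distinct from $\tilde\Sigma^i_{1,s}$ and $\tilde\Sigma^i_{2,s}$. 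Since Lemma~\ref{lem.annulus.components} already exhausts $L^i_s\cap\overline{A(R_0,K)}$ by $\Sigma^i_{1,s}\cup\Sigma^i_{2,s}$, necessarily $\Upsilon^i\cap A(R_0,K)=\emptyset$, so $\Upsilon^i\subset\overline{B_{R_0}}\subset B_K$. Then $\Upsilon^i$ is both open in $L^i_s$ (as an open subset of the open set $L^i_s\cap B_K$) and closed in $L^i_s$ (its closure lies inside $B_K$), so $\Upsilon^i$ is a compact connected component of $L^i_s$ itself: a closed embedded exact Lagrangian in $\C^n$, contradicting the well-known non-existence of such objects.

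Next, I would pick $s_0\in[-1,0)$ in the full-measure set on which Theorem~\ref{thm:blow-downB} applies, and assume for contradiction that $\tilde\Sigma^i_{1,s_0}=\tilde\Sigma^i_{2,s_0}$ along a further subsequence. By the previous step, $L^i_{s_0}\cap B_K$ then consists of a single connected component, and consequently only one connected component of $L^i_{s_0}\cap B_{2K}$ can meet $B_K$ (any such must contain the unique component of $L^i_{s_0}\cap B_K$, hence is the unique $B_{2K}$-component containing it). Taking $\Sigma^i:=L^i_{s_0}$, which has empty boundary, the hypotheses of Theorem~\ref{thm:blow-downB} with $R=K$ are met, forcing the Radon limit of $L^i_{s_0}$ restricted to $B_K$ to be $m\mu_{j_0}$ with $m\leq m_{j_0}=1$ and carrying the single Lagrangian angle $\bar\theta_{j_0}$. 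But Theorem~\ref{thm:blow-downA} simultaneously identifies this limit on $B_K$ as $\mu_1+\mu_2$, with distinct angles $\bar\theta_1\neq\bar\theta_2$. Testing against $\phi\in C^\infty_c(B_K)$ with $\mu_{3-j_0}(\phi)>0$ and $f\in C^2(\R)$ with $f(\bar\theta_{j_0})=0\neq f(\bar\theta_{3-j_0})$ yields the contradiction, so $\tilde\Sigma^i_{1,s_0}\neq\tilde\Sigma^i_{2,s_0}$.

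For the final Radon convergence $\tilde\Sigma^i_{j,s}\to P_j\cap B_K$ for almost all $s$, I would apply Theorem~\ref{thm:blow-downB} to each component individually, taking $\Sigma^i$ to be the connected component of $L^i_{s_0}\cap B_{4K}$ containing $\tilde\Sigma^i_{j,s_0}$. The distinctness established above, combined with an extension of Lemma~\ref{lem.annulus.components} to the larger annulus $A(R_0,4K)$ via White's regularity at the larger scale, ensures that $\tilde\Sigma^i_{1,s_0}$ and $\tilde\Sigma^i_{2,s_0}$ lie in distinct components of $L^i_{s_0}\cap B_{4K}$, whence $\Sigma^i\cap B_K=\tilde\Sigma^i_{j,s_0}$ satisfies the single-component hypothesis. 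Theorem~\ref{thm:blow-downB} then yields convergence to $m\mu_{k_j}$ with $m=1$, and the presence of the graphical piece $\Sigma^i_{j,s_0}$ over $P_j$ with Lagrangian angle converging to $\bar\theta_j$ forces $k_j=j$. The main obstacle will be the second step: correctly transferring component-count information from $B_K$ up to $B_{2K}$ so that the single-component hypothesis of Theorem~\ref{thm:blow-downB} is met under the contradictory assumption, and then juxtaposing its localized conclusion with the global Radon limit of Theorem~\ref{thm:blow-downA} to extract the contradiction.
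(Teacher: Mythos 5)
Your overall architecture (use Lemma \ref{lem.annulus.components} to produce the two graphical annular pieces, rule out any further components, then separate the two extensions and identify their limits via Theorems \ref{thm:blow-downA} and \ref{thm:blow-downB}) is close in spirit to the paper's, and indeed your mechanism for discarding extra pieces is the one announced in the paper's informal sketch before Proposition \ref{prop:planar.blow-down}. But the step where you kill an extra component $\Upsilon^i\subset \ov{B_{R_0}}$ by declaring it a closed \emph{embedded} exact Lagrangian and invoking Gromov-type non-existence has a genuine gap in the setting in which the lemma is actually stated: nothing in Proposition \ref{prop:planar.blow-down} assumes the flow is embedded. The paper works throughout with immersed Lagrangians (Remark \ref{rmk:components.cone}; the immersions $\iota^i_s$ appear explicitly in the proof of this very lemma), and compact exact, zero-Maslov \emph{immersed} Lagrangians in $\C^2$ do exist --- the Whitney sphere, which is automatically exact and zero-Maslov since $H^1(\mathcal{S}^2)=0$. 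Moreover, in the case $n=2$ treated here the almost calibrated condition is not assumed in Proposition \ref{prop:planar.blow-down}, so one cannot exclude such a compact component by the $\int_L\Omega=0$ argument either. Since your step 1 underpins both the single-component hypothesis you feed into Theorem \ref{thm:blow-downB} in step 2 and the identification $\Sigma^i\cap B_K=\tilde\Sigma^i_{j,s_0}$ in step 3, the gap propagates through the whole proof as written.

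The paper's actual proof replaces this soft argument by a quantitative one that is valid for immersed flows: the isoperimetric inequality gives $\bb{M}(\Sigma^i_{j,s})\geq\pi K^2$ for each of the two components attached to the annular graphs, Radon convergence to the two multiplicity one planes then forces the leftover piece $\tilde L^i_s$ (confined to $B_{R_0}$ by Lemma \ref{lem.annulus.components}) to have mass tending to zero, and the clearing-out lemma \cite[Proposition 4.23]{Ecker.MCF.Book}, combined with the self-similar structure of the blow-down sequence ($\tilde L^i_s=\sigma^{-1}\tilde L^j_{\sigma^{-2}s}$), shows $\tilde L^i_s$ is in fact empty. This flow-theoretic mechanism is the missing ingredient in your proposal. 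Two smaller points: your claim that distinctness of $\tilde\Sigma^i_{1,s_0}$ and $\tilde\Sigma^i_{2,s_0}$ in $B_K$ plus graphicality on $A(R_0,4K)$ forces them into distinct components of $L^i_{s_0}\cap B_{4K}$ needs a short justification (any path joining the two sheets inside $B_{4K}$ must pass through $B_{R_0}$, and its annular excursions can be rerouted within $B_K$ along the connected sheets), and the conclusion $m=1$, $k_j=j$ should be argued exactly as you indicate via the positive mass of the graphical piece over $P_j$; both of these are fixable details rather than gaps.
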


\begin{proof}
Theorem \ref{thm:blow-downB} implies that $L^i_s \cap B_K$ must have at least two connected components for almost all $s\in[-1,0)$, as otherwise we would conclude that $\ov{\t}_1 = \ov{\t}_2$. 

Now let $\Si_{j,s}^i$ be a connected component that converges to $P_j$ on the annulus $A(R_0,K)$, as in Lemma \ref{lem.annulus.components}. Due to Theorem \ref{thm:blow-downB}, $\Si_{j,s}^i$ must converge (as a Radon measure) to a subset of either $P_1$ or $P_2$.  Since it converges in $C^2$ on $A(R_0, K)$ to $P_j$, we deduce that $\underset{i\ra\infty}\lim\Si_{j,s}^i\subseteq P_j\cap B_K$. Due to the isoperimetric inequality, $\bb{M}(\Sigma_{j,s}^i)\geq \pi K^2$ (where here $\bb{M}$ is the mass of $\Sigma_{j,s}^i$ considered as a varifold), and so since $\Sigma_{1,s}^i$ and $\Sigma_{2,s}^i$ are disjoint connected components,
\[\bb{M}(L^i_s\cap B_K)\geq \bb{M}(\Sigma_{1,s}^i\cup\Sigma_{2,s}^i) = \bb{M}(\Sigma_{1,s}^i)+\bb{M}(\Sigma_{2,s}^i)\geq 2\pi K^2.\]
By convergence in Radon measure, we have that for all $s\in[-1,0)$, \[\lim_{i\ra\infty}\bb{M}(L^i_s\cap B_K) = 2 \pi K^2.\]  Let $\iota^i_s:L\to\bb{C}^n$ be the immersion of $L^i_s$ and let \[\tilde{L}_s^i = \iota^i_s\left[L\setminus\left( (\iota^i_s)^{-1}(\Sigma^i_{1,s})\cup (\iota^i_s)^{-1}(\Sigma^i_{2,s})
\right)\right]\cap B_K.\] 
By subadditivity, \begin{equation}\label{eq:mass.limit.zero}
\lim_{i\ra\infty}\bb{M}(\tilde{L}_s^i)=0.
\end{equation}
Due to Lemma \ref{lem.annulus.components} we also have that $\tilde{L}_{s}^i \subset B_{R_0}$ for all $s\in[-1,0)$. 

We   now use the clearing out lemma \cite[Proposition 4.23]{Ecker.MCF.Book}
 to see that $\tilde{L}_{s}^i=\emptyset$.  For a contradiction, suppose not.  Let $\epsilon>0$ and choose $\rho$ such that $\rho^2=2n\epsilon$.  Let $\kappa(n,\frac{1}{4n})>0$ be the constant given in \cite[Proposition 4.23]{Ecker.MCF.Book}.   By \eqref{eq:mass.limit.zero} we may choose $s_0\in(-1,-1+\frac{1}{2}\epsilon)$ and $j$ sufficiently large so that
\[\rho^{-n}\bb{M}(\tilde{L}_{s_0}^j \cap B_\rho(x_0))<\kappa\]
for any $x_0\in B_{R_0}$.  
By \cite[Proposition 4.23]{Ecker.MCF.Book}, $\tilde{L}_{s}^j=\emptyset$ for all $s\in[-1+\e,0)$.  Since $L^i_s$ is a blow-down sequence we may assume  there exists $\sigma>2$ such that $\tilde{L}^i_s=\sigma^{-1}\tilde{L}^j_{\sigma^{-2}s}$ for all $s\in[-1,0)$.  This implies $\tilde{L}^i_s=\emptyset$, our required contradiction.

Finally, due to the isoperimetric inequality and Theorem \ref{thm:blow-downB}, we see that $\Sigma_{j,s}^i$ must converge to $P_j\cap B_K$ as a Radon measure for almost all $s\in[-1,0)$.
\end{proof}

Since $\Si_{j,s}^i$ becomes arbitrarily close in the Radon measure sense to the plane $P_j$ by Lemma \ref{lem:component.convergence}, we may apply a variant of White's regularity theorem which may be localised to each $\Si_{j,s}^i$ (c.f.~\cite[Remark 4.16 (4), Proposition 4.17, Lemma 5.8]{Ecker.MCF.Book}). Hence, the $\Si^i_{j,s}$ satisfy a uniform $C^{2,\alpha}$ bound in $B_{\frac{R_0}{2}}$ for $-\frac{1}{2}\leq s<0$. In particular   $L_s^i\cap B_\frac{R_0}{2}$ satisfies  $|A^i_s|<C$ for $s\in[-\frac{1}{2},0)$, where $A^i_s$ is the second fundamental form of $L^i_s$. By scaling this implies the second fundamental form $A_t$ of $L_t$ satisfies 
\begin{equation*}
|A_t|<\frac{C}{\s_i}
\end{equation*}
for $-\frac{\s_i^2}{2}\leq t<0$. Sending $i\ra \infty$,  we see that $A_t=0$ and thus $L_t=P_1\cup P_2$.
\end{proof}

\subsection{Special Lagrangian blow-downs}  We show that, if $(L_t)_{-\infty<t<0}$ is an ancient solution to LMCF in $\C^n$ satisfying \eqref{eq:bounds} which is almost calibrated, and some blow-down is a special Lagrangian cone (meaning that it has just one Lagrangian angle), then $L_t$ is itself special Lagrangian (and thus stationary).

\begin{prop}\label{prop:SL.blowdown}
Let $(L_t)_{-\infty<t<0}$ be an ancient solution to LMCF in $\C^n$ which is zero Maslov and satisfying \eqref{eq:bounds}. Assume further that the Lagrangian angle satisfies
\begin{equation}\label{eq:angle.restrict}
\theta_t \in [\varepsilon, 2\pi - \varepsilon]
\end{equation}
 for some $\varepsilon >0$.  Suppose that for some blow-down sequence $(L^i_s)_{-\infty<s<0}$ as in \eqref{eq:blow-down.seq} we have that $(L^i_s)_{-\infty<s<0}$ converges to a special Lagrangian cone $C$ with Lagrangian angle $\ov\t$ as varifolds as $i\ra\infty$.  Then $(L_t)_{-\infty<t<0}$ is special Lagrangian with Lagrangian angle $\ov\t$.
\end{prop}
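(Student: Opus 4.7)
The plan is to apply Huisken's monotonicity formula to the non-negative test function $(\theta_t-\bar\theta)^2$, use the scaling invariance of the resulting weighted integral to transfer information from the ancient flow to the blow-down sequence, and then exploit the fact that the blow-down carries only the single Lagrangian angle $\bar\theta$ to force the weighted integral to vanish. This mirrors the strategy of Neves--Tian in the $n=2$ setting.

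Concretely, first observe that by \eqref{eq:theta.evol.2} with $m=1$ and $y=\bar\theta$,
\[
\ho(\theta_t-\bar\theta)^2 + 2|\rd\theta_t|^2 = 0,
\]
so $(\theta_t-\bar\theta)^2$ is a subsolution of $\ho u=0$. Hypothesis \eqref{eq:bounds} gives a uniform bound on $\theta_t$ (hence on $(\theta_t-\bar\theta)^2$), so the monotonicity formula \eqref{eq:monotonicity} applies in the noncompact setting with bounded area ratios and yields, for any fixed $l>0$,
\[
I_t(l) := \int_{L_t}(\theta_t-\bar\theta)^2\,\Phi_{(0,l)}(x,t)\,\rd\mathcal{H}^n
\]
is non-increasing in $t<l$. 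A direct change of variables under $L^i_s=\sigma_i^{-1}L_{\sigma_i^2 s}$, combined with the scaling invariance of the Lagrangian angle, gives the identity
\[
I_{\sigma_i^2 s}(\sigma_i^2 l) = \int_{L^i_s}(\theta^i_s-\bar\theta)^2\,\Phi_{(0,l)}(y,s)\,\rd\mathcal{H}^n.
\]

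Now fix $t_0<0$ and $l_0>0$, and pick any $s<0$. For $i$ large enough that $\sigma_i^2 s\le t_0$, monotonicity and the scaling identity give
\[
I_{t_0}(l_0)\le I_{\sigma_i^2 s}(l_0) = \int_{L^i_s}(\theta^i_s-\bar\theta)^2\,\Phi_{(0,l_0/\sigma_i^2)}(y,s)\,\rd\mathcal{H}^n.
\]
As $i\to\infty$, the parameter $l_0/\sigma_i^2\to 0$ so $\Phi_{(0,l_0/\sigma_i^2)}(y,s)\to\Phi_{(0,0)}(y,s)$ uniformly on compacta, with Gaussian tails in $|y|$. Since the blow-down of $(L^i_s)$ is the special Lagrangian cone $C$ with Lagrangian angle $\bar\theta$, Theorem \ref{thm:blow-downA} applied with $f(\theta)=(\theta-\bar\theta)^2$ gives $f(\bar\theta)=0$ and therefore
\[
\lim_{i\to\infty}\int_{L^i_s}(\theta^i_s-\bar\theta)^2\phi\,\rd\mathcal{H}^n=0 \qquad \forall\,\phi\in C^\infty_c(\C^n).
\]
Using the uniform bound on $(\theta^i_s-\bar\theta)^2$ (from \eqref{eq:bounds} and \eqref{eq:angle.restrict}) together with the uniformly bounded area ratios \eqref{eq:bounded.area.ratios}, a truncation argument controls the Gaussian tails and upgrades this to the desired conclusion that the integral against $\Phi_{(0,l_0/\sigma_i^2)}(y,s)$ also tends to zero. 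Thus $I_{t_0}(l_0)\le 0$, and since the integrand is non-negative, $I_{t_0}(l_0)=0$. Because $\Phi_{(0,l_0)}(x,t_0)>0$ everywhere, $\theta_{t_0}\equiv\bar\theta$ on $L_{t_0}$ by continuity. As $t_0<0$ was arbitrary, $\theta_t\equiv\bar\theta$ for all $t<0$, and Lemma \ref{lem:magic.facts}(a) implies each $L_t$ is minimal, hence special Lagrangian with angle $\bar\theta$.

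The main technical point is the final step of the Radon-measure/varifold convergence, namely upgrading from compactly supported $\phi$ to the non-compactly supported weight $\Phi$; this is where hypotheses \eqref{eq:angle.restrict} and \eqref{eq:bounded.area.ratios} jointly enter, to bound both $(\theta^i_s-\bar\theta)^2$ and $\mathcal{H}^n(L^i_s\cap B_R)$ uniformly and allow the Gaussian tail to be made small. Once this is in hand, the scaling identity, the monotonicity inequality, and the structure theorem combine cleanly to give the conclusion.
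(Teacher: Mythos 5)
Your outline follows the same overall strategy as the paper (monotonicity of the Gaussian-weighted $L^2$-distance of $\theta_t$ to $\bar\theta$, transferred to the blow-down by scaling, plus a tail estimate), and the monotonicity, scaling identity and truncation steps are fine. The gap is the step where you conclude $\lim_{i\to\infty}\int_{L^i_s}(\theta^i_s-\bar\theta)^2\phi\,\rd\mathcal{H}^n=0$ by applying Theorem \ref{thm:blow-downA} ``with $f(\bar\theta)=0$''. Theorem \ref{thm:blow-downA} produces its own angles $\bar\theta_1,\ldots,\bar\theta_N$, which are limits of the flow's angle functions $\theta^i_s$, and the hypothesis that $L^i_s\to C$ as varifolds only identifies $\sum_j m_j\mu_j$ with the measure of $C$; it does not force every $\bar\theta_j$ to equal $\bar\theta$. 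Varifold convergence carries no orientation information, and reversing orientation shifts the Lagrangian angle by $\pi$, so different sheets of $L^i_s$ can limit onto the same plane of $C$ with angles accumulating at values differing by $\pi$. The grim reaper times $\R$ (Example \ref{ex:grim}), which the paper uses precisely to show the proposition is sharp, does exactly this: its blow-down is a multiplicity-two special Lagrangian plane as a varifold, but the decomposition of Theorem \ref{thm:blow-downA} carries two angles differing by $\pi$, so with $f(\theta)=(\theta-\bar\theta)^2$ the right-hand side of \eqref{eq:blow-downA} is $\pi^2$ times a nonzero measure rather than $0$, and indeed the conclusion of the proposition fails for that flow.

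Relatedly, your only use of \eqref{eq:angle.restrict} is to bound $(\theta^i_s-\bar\theta)^2$, which is already guaranteed by \eqref{eq:bounds}; so as written the argument never genuinely uses the hypothesis that makes the statement true, and it would apply equally to the example above. This is exactly where the paper's proof does its work: it introduces the cut-off $\psi:[0,2\pi)\to[\varepsilon,2\pi-\varepsilon]$, chosen so that $\psi\circ\theta$ is continuous on the Grassmannian of oriented Lagrangian planes and equals $\theta$ on $[\varepsilon,2\pi-\varepsilon]$; by \eqref{eq:angle.restrict} one may replace $\theta_t$ by $\psi(\theta_t)$, and only then do the varifold convergence to $C$ and the decay of the heat kernel yield $\lim_{t\to-\infty}\int_{L_t}|\theta_t-\bar\theta|^2\Phi_{(0,1)}\,\rd\mathcal{H}^n=0$, after which monotonicity finishes the argument as in your final step. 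To complete your version you must supply this bridge from varifold convergence of $L^i_s$ to convergence of the angle function itself, using \eqref{eq:angle.restrict} in an essential way, rather than reading it off from Theorem \ref{thm:blow-downA}.
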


\begin{proof}
The monotonicity formula \eqref{eq:monotonicity} together with the evolution equation \eqref{eq:theta.evol.2} for the Lagrangian angle $\t_t$ implies that
\begin{equation}\label{eq:angle.decay}
\begin{split}
\ddt{} \int_{L_t} |\theta_t -\ov\theta|^2 \Phi_{(0,1)}\, \rd\Ha^n =&\  
-  \int_{L_t} |\rd\t_t|^2 \Phi_{(0,1)}\, \rd\Ha^n\\
&\   -  \int_{L_t} 
|\theta_t -\ov\t|^2 \left|\rd\t_t + \frac{\lambda_t}{2t}\right |^2 \Phi_{(0,1)}\, \rd\Ha^n\, ,
\end{split}
\end{equation}
where $\bar{\theta} \in [\varepsilon, 2\pi - \varepsilon]$. Note that by assumption we have a positive sequence $\sigma_i\to\infty$ such  that for any $t<0$
\begin{equation}\label{eq:varifold.cone.convergence}
\sigma_i^{-1}L_{\sigma_i^{2}t} \ra C\
\end{equation}
 in the varifold sense.  We now consider the (oriented) angle function $\theta$ on the Grassmanian of oriented Lagrangian planes in $\C^n$, where we choose the values of $\theta$ to lie in $[0,2\pi)$. Note that $\theta$ is not continuous, but we can define a function $\psi: [0,2\pi) \rightarrow [\varepsilon, 2\pi - \varepsilon]$ by
$$ \psi(\theta)= 
\begin{cases} \ \varepsilon - \frac{\pi - \varepsilon}{\varepsilon}(\theta - \varepsilon) \qquad \qquad \qquad &\text{for}\ \theta \in [0,\varepsilon),\\
\ \theta &\text{for}\ \theta \in [\varepsilon, 2\pi - \varepsilon],\\
\ 2\pi-\varepsilon  - \frac{\pi - \varepsilon}{\varepsilon}(\theta - (2\pi - \varepsilon)) &\text{for}\ \theta \in (2\pi-\varepsilon, 2\pi).
\end{cases} 
$$
Note that  $ \psi(\theta) = \theta$ for  $ \theta \in [\varepsilon, 2\pi - \varepsilon]$ and $\psi(\theta)$ is a continuous function on the Grassmanian of oriented Lagrangian planes in $\C^n$. 

Since $L_t$ satisfies \eqref{eq:angle.restrict} we can replace $\theta_t$ by $\psi(\theta_t)$ in \eqref{eq:angle.decay}. The convergence \eqref{eq:varifold.cone.convergence} as varifolds and the decay of the heat kernel implies that
 \begin{equation}\label{eq:angle.infinity.limit}
 \lim_{t \ra -\infty} \int_{L_t}  |\theta_t -\ov\theta|^2 \Phi_{(0,1)}\, \rd\Ha^n  = \lim_{t \ra -\infty} \int_{L_t}  |\psi(\theta_t) -\ov\theta|^2 \Phi_{(0,1)}\, \rd\Ha^n = 0.
\end{equation}  
Combining \eqref{eq:angle.decay} and \eqref{eq:angle.infinity.limit}, we have $\theta_t \equiv \ov\theta$ as required.
\end{proof}

\begin{remark}
 The product $L$ of a grim reaper curve with $\R$ as in Example \ref{ex:grim} is an ancient (in fact, eternal) solution to LMCF in $\C^2$ which is not special Lagrangian, but
whose blow-down is a multiplicity two special Lagrangian plane for $s<0$, a multiplicity two special Lagrangian half-plane for $s=0$ and vanishes for $s>0$.  This shows that Proposition \ref{prop:SL.blowdown} is sharp since the Lagrangian angle of $L$ satisfies $0<
\theta<2\pi$ but it does not satisfy \eqref{eq:angle.restrict}.
\end{remark}

\section{Asymptotics for minimal submanifolds}\label{sect:asymptotics}

We obtain asymptotics for minimal submanifolds with planar blow-downs via an optimal \Lo--Simon inequality, and methods from \cite{SimonCone}. 
 As a consequence, we strengthen the uniqueness result for Lawlor necks in \cite{IJO.Uniqueness.Lawlor}.  We believe that our results here could also be proved using methods from \cite{AlmgremAllardCone}.

\subsection{Main results}

Throughout this section we let $L$ be a minimal $n$-dimensional submanifold of $\R^{n+k}$ with bounded area ratios, i.e.~satisfying \eqref{eq:bounded.area.ratios}.  Since the area ratios are bounded, we can consider a tangent cone $L^{\infty}$ of $L$ at infinity, and we let $\Sigma = L^\infty\cap \partial B_1$.  We assume that $L^{\infty}$ is a multiplicity one minimal cone that is smooth outside the origin, which ensures that $L^{\infty}$ is unique (by \cite{SimonCone}), and that $L^{\infty}$ is \emph{Jacobi integrable}, i.e.~the Jacobi fields on $\Sigma$, as a minimal submanifold of $\partial B_1$, are integrable. Our main result here is the following, where $r$ denotes the distance from $0$ in $\R^{n+k}$.

\begin{theorem}\label{decayestimate}
Let $L^n\subseteq\R^{n+k}$ be a minimal submanifold with bounded area ratios. Suppose that a blow-down $L^\infty$ of $L$ is a multiplicity one cone which is smooth away from $0$ and Jacobi integrable. 
There exists $R>0$ such that $L\setminus B_R$ may be written as a graph of a normal vector field $v$ over $L^\infty$ with $|v|\leq C r^\a$ for some $\a\in(0,1)$ and $C>0$.
\end{theorem}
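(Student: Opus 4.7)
I would adapt Simon's argument for the uniqueness of tangent cones at isolated singularities of minimal surfaces \cite[Section 7]{SimonCone} to the present setting of the tangent cone at infinity. The starting point is that uniqueness of $L^\infty$, combined with an Allard $\varepsilon$-regularity applied at each dyadic scale $\rho \in [R_0,\infty)$, allows $L\setminus B_{R_0}$ to be written as a $C^{2,\beta}$-graph of a normal vector field $v$ over $L^\infty \setminus \{0\}$ with scale-invariant smallness $r^{-1}|v|+|\n v|\ra 0$ as $r\ra\infty$. This gives only a qualitative sublinear decay; the task is to upgrade it to the quantitative bound $|v|\leq C r^\a$ with $\a \in (0,1)$.

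Working in cylindrical coordinates $(t,\sigma)=(\log r,\sigma) \in \R \times \Sigma$ on the smooth part of $L^\infty$, minimality of $L$ becomes a quasilinear elliptic equation for $v(t,\sigma)$ whose linearisation decouples in eigenmodes of the Jacobi operator $J_\Sigma$ on $\Sigma$. Modes with nonzero eigenvalue $\lambda_j$ have explicit characteristic exponents $\gamma_j^\pm$ in $t$, giving power-law behaviour in $r$, while $\lambda_j=0$ modes are Jacobi fields on $\Sigma$ which, by the integrability hypothesis, are tangent to a finite-dimensional family $\mathcal{N}$ of smooth minimal cones near $L^\infty$. I would then study the scale-invariant energy
\begin{equation*}
I(\rho) = \rho^{-n-2}\int_{L \cap (B_{2\rho}\setminus B_\rho)}|v|^2 \,\rd\Ha^n
\end{equation*}
and relate its $\rho$-derivative to the first variation of the normalised area functional $E$ on graphs over an annulus of $L^\infty$.

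The core of the argument is an optimal \Lo--Simon inequality of the form
\begin{equation*}
|E(v)-E(0)|^{1/2} \leq C\,\|\delta E(v)\|
\end{equation*}
for $v$ sufficiently small in $C^{2,\beta}$, after quotienting by the integrable family $\mathcal{N}$. The exponent $\tfrac12$, rather than any weaker $\tfrac12+\eta$, is exactly what Jacobi integrability buys, mirroring Simon's analysis. Combined with a Caccioppoli-type bound for $v$, this yields a differential inequality $\rho I'(\rho) \leq -c\, I(\rho)$, which integrates to $I(\rho) \leq C \rho^{-2\a}$ for some $\a>0$ determined by the spectral gap of $J_\Sigma$ and the dimension $n$; choosing $R_0$ large keeps $\a<1$. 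Standard interior Schauder estimates for the minimal surface equation, applied on each dyadic annulus and using the $C^{2,\beta}$-smallness of the rescaled graph, then promote this $L^2$-decay of $v$ to the claimed pointwise bound $|v|\leq C r^\a$.

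The principal obstacle is the optimal \Lo--Simon inequality itself: one must simultaneously quotient by $\mathcal{N}$, extract the sharp exponent from the spectral data on $\Sigma$, and absorb the nonlinear errors in the minimal surface equation, all on a non-compact annular region and without a natural compactness as in Simon's interior set-up. Once this inequality is in place, the differential-inequality argument and the passage from $L^2$- to $C^0$-decay are essentially routine.
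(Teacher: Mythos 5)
Your plan follows essentially the same route as the paper: Simon's tangent-cone scheme in logarithmic cylindrical coordinates over $\Sigma$, with Jacobi integrability yielding the optimal \Lo--Simon inequality, a resulting geometric decay of a scale-invariant excess in $t=\log r$, and elliptic estimates upgrading $L^2$-decay of the graphing function to the pointwise bound $|v|\leq Cr^{\alpha}$. The paper's implementation differs only in details --- it runs the decay argument on the exterior weighted excess $\int_{L\setminus B_{e^{\rho}}}|D^{\perp}r|^{2}r^{-n}\,\rd\Ha^{n}$ coming from the monotonicity formula, and it secures graphicality over the whole exterior region via an extension lemma (compactness plus Allard regularity, iterated using the $L^2$-closeness estimate) rather than invoking uniqueness of the cone at every dyadic scale, finally identifying the limit of the graph function with $0$ because the blow-down is $L^{\infty}$ itself --- but these are implementation choices within the same method.
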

\begin{remark}
If we drop the Jacobi integrable assumption on $L^{\infty}$ we obtain the same statement but with the weaker decay estimate $|v|<C\frac{r}{\log^pr}$ for some $p>0$.
\end{remark}

Since planes are Jacobi integrable (implicitly shown in \cite{AlmgremAllardCone}, for example), Theorem \ref{decayestimate} and the work in \cite{IJO.Uniqueness.Lawlor}, as explained in Section \ref{sect:Primliminaries}, implies we can strengthen the known  uniqueness statement for asymptotically planar special Lagrangians.

\begin{theorem}\label{thm:SL.planar.uniqueness}
Let $L$ be a smooth, exact, special Lagrangian in $\C^n$ with bounded area ratios.  Suppose that a blow-down of $L$ is a pair of transversely intersecting multiplicity one planes.  Then, up to rigid motions, $L$ is either a Lawlor neck as in Example \ref{ex:Lawlor} or the pair of planes.
\end{theorem}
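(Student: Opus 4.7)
The plan is to combine the improved asymptotic decay in Theorem \ref{decayestimate} with the uniqueness result of Imagi--Joyce--Oliveira dos Santos \cite{IJO.Uniqueness.Lawlor} when $n > 2$, and to use the hyperk\"ahler rotation of Lemma \ref{lem:SL.cplx} to reduce to complex geometry when $n = 2$.

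For $n > 2$, since $L$ is special Lagrangian it is calibrated, hence area-minimising, and in particular minimal with bounded area ratios by hypothesis. The blow-down $P_1 \cup P_2$ is a multiplicity-one union of two transverse Lagrangian planes meeting only at the origin, so it is smooth away from $0$, and pairs of planes are Jacobi integrable as implicitly shown in \cite{AlmgremAllardCone} (compare the comment following Theorem \ref{decayestimate}). Thus Theorem \ref{decayestimate} yields $R > 0$, an exponent $\alpha \in (0,1)$, and a normal vector field $v$ on $(P_1 \cup P_2) \setminus B_R$ over which $L \setminus B_R$ is the graph, with $|v(x)| \leq C|x|^\alpha$. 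The classification in \cite{IJO.Uniqueness.Lawlor} requires the full decay \eqref{eq:asymptotics}, so the next step is to upgrade this to the $C^1$ bound $|\nabla v(x)| \leq C|x|^{\alpha-1}$. I would obtain this by rescaling: for $r$ large, the hypersurface $r^{-1}L$ is the graph of $v_r(y) = r^{-1}v(ry)$ on an annular region of $P_1 \cup P_2$ uniformly separated from the origin, where the two asymptotic sheets are also separated by a definite distance; $v_r$ satisfies the quasilinear elliptic special Lagrangian graph equation with $|v_r| \leq C r^{\alpha - 1}$ on this fixed annulus, and interior Schauder estimates applied on each sheet give $|\nabla v_r| \leq C r^{\alpha - 1}$, which unscales to the desired bound. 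With \eqref{eq:asymptotics} established, \cite{IJO.Uniqueness.Lawlor} identifies $L$, up to rigid motions, with a Lawlor neck or, allowing the immersed case, with the pair of planes itself.

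For $n = 2$, Lemma \ref{lem:SL.cplx} gives a hyperk\"ahler rotation sending $L$ to a smooth complex curve $L'$ in $\C^2$ whose blow-down is a pair of transverse complex lines; after a linear change of complex coordinates these may be taken to be $\{w_1 w_2 = 0\}$. The bounded area ratios force $L'$ to have quadratic volume growth, so by classical removable singularity/algebraicity arguments for plane curve singularities (equivalently, the uniqueness stated in Example \ref{ex.SL.dim2}), $L'$ is the affine variety $\{w_1 w_2 = c\}$ for some $c \in \C$. Undoing the rotation realises $L$ as $L_\phi(a + ib)$ for the appropriate angle $\phi$ and $c = a + ib$, and the exactness of $L$ forces $b = 0$, giving either the pair of planes ($c = 0$) or a Lawlor neck ($c \in \R \setminus\{0\}$).

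I expect the main obstacle to be the $C^0$-to-$C^1$ upgrade in the case $n > 2$, since interior regularity must be applied in a form that is uniform as one moves out to infinity and approaches the singular locus of the cone at progressively smaller relative scales. The key observation, which I would exploit, is that on an annular region centred at a point at distance $r$ from the origin the two sheets of $r^{-1}L$ are uniformly separated (as the asymptotic planes meet only at $0$), so the rescaled problem splits into two single-sheet graphical problems to which standard interior regularity for the special Lagrangian (or minimal graph) equation applies cleanly.
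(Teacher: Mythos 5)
Your proposal is correct and follows essentially the same route as the paper: Theorem \ref{decayestimate} (with planes being Jacobi integrable) combined with the uniqueness result of \cite{IJO.Uniqueness.Lawlor} for $n>2$, and the hyperk\"ahler rotation/complex-curve uniqueness of Example \ref{ex.SL.dim2} for $n=2$, with exactness forcing $b=0$. The $C^0$-to-$C^1$ upgrade you supply via rescaling and interior estimates is exactly the step the paper leaves implicit (it also follows from the uniform $C^3$ bounds on $u$ in the proof of Theorem \ref{decayestimate} together with interpolation), so your argument fills in a detail rather than diverging from the paper's approach.
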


\begin{remark} This shows that the uniqueness statement one obtains for $n=2$ using complex geometry, as explained in Section \ref{sect:Primliminaries}, extends to all dimensions.
\end{remark}

\subsection{Monotonicity}
Since  
$L^n\subseteq\R^{n+k}$ is minimal, if $\mu$ is the measure associated to $L$ and $D^{\perp}$ is the projection of the Euclidean derivative to the normal bundle of $L$, we have the following monotonicity formula:
\begin{equation}\frac{d}{d\rho}\left(\frac{\mu(B_\rho)}{\rho^n}\right) = \int_{L^n\cap B_\rho} \frac{|D^\perp r|^2}{r^{n}}d \Ha^n.
 \label{monotonicity}
\end{equation}
From \eqref{monotonicity} and \eqref{eq:bounded.area.ratios}, we have
\begin{equation} \mu_\infty := \lim_{\tau\ra\infty}\frac{\mu(B_\tau(0))}{\tau^n}
\label{tinfinity}
\end{equation}
is finite.  Furthermore, we have
\begin{equation}\int_{L^n\cap(\bb{R}^{n+k}\setminus B_\rho)}\frac{|D^\perp r|^2}{r^n}d\Ha^n = \mu_\infty - \frac{\mu(B_\r(0))}{\r^n}.
 \label{monotonicity2}
\end{equation}
 Equation \eqref{monotonicity} and the co-area formula imply that for almost all $\r \in (0,\infty)$,
\begin{equation}n\r^{-(n+1)}\mu(B_\rho) = \r^{-n}\int_{\partial B _\r \cap L}|\n r| d\Ha^{n-1} .\label{monotonicitydiff}
\end{equation}

\subsection{Energy} Following the notation in \cite[Section 7]{SimonCone}, we say that $L$ is a graph of $h\in C^{\infty}((TL^\infty)^{\perp})$ in the annulus $B_\r\setminus \overline{B}_\sigma$ if and only if
\begin{equation}G_{\sigma, \r}=\left\{\frac{x+h(x)}{\sqrt{1+\frac{|h(x)|^2}{|x|^2}}}: x\in L^\infty\cap(B_\r\setminus \overline{B}_\sigma)\right\}=L\cap(B_\r\setminus \overline{B}_\sigma). 
 \label{asgraph}
\end{equation}
In practice, it easier to work with logarithmic polar coordinates and view $h$ in terms of a normal vector field $u$ on $\Sigma\times (\log\s,\log\r)$:
\begin{equation} u(\omega, t) = r^{-1} h(\omega r), 
 \label{utoh}
\end{equation}
where $t=\log r$. We will write the restriction of $u$ to $\S$ at fixed $t$ as $u(t)(\cdot) =u(\cdot, t)$. 

For a section $v$ of the pullback of $(TL^{\infty})^{\perp}$ to $\Sigma$ of small $C^1$ norm, we define an energy via
\begin{equation}\mathcal{E}(v) = \int_\S J(\o,v,\n v)d\Ha^n(\o),
 \label{energygraph}
\end{equation}
where $J$ is chosen, as in \cite[Section 7]{SimonCone}, so that if $\|u(t)\|_{C^1(\Sigma)}$ is sufficiently small, 
\[\mathcal{E}(u(t)) = e^{t(1-n)}\Ha^{n-1}(L\cap \partial B_{e^t}) =
\Ha^{n-1}(e^{-t}L\cap \partial B_{1})  .\]
The energy $\mathcal{E}$ is uniformly convex for $\|v\|_{C^1(\Sigma)}$   sufficiently small. Observe that
\begin{equation}\mathcal{E}(0) = n\mu_\infty  .
 \label{energydensity}
\end{equation}
We define the elliptic operator $\mathcal{M} = -\text{grad}\,\mathcal{E}$ on $\S$.  The Jacobi integrability of $L^{\infty}$ implies we may apply \cite[Lemma 1 on p.~80]{SimonBook} to obtain an optimal \emph{\Lo--Simon inequality}, namely there exists $\s>0$ such that if $|u|_{C^3(\S)}<\s$ then   
 \begin{equation}|
 \mathcal{E}(u)-\mathcal{E}(0)|\leq C \|\mathcal{M}(u)\|^2_{L^2(\Sigma)}=C \|\text{grad}\,\mathcal{E}(u)\|^2_{L^2(\Sigma)}.
  \label{optimalLS}
 \end{equation}
 
 An advantage of our coordinates is that minimality becomes a uniformly  elliptic equation (cf.~\cite[p.~565]{SimonCone}): if $\|u\|_{C^1(\Sigma\times [1,\infty))}$ is small then $H\equiv 0$ if and only if
\begin{equation}\ddot u(t) + n \dot u(t) +\mathcal{M}(u)(t) + \mathcal{R}(u)(t) =0,
 \label{elliptic}
\end{equation}
where 
\[\mathcal{R}(u)(t) = (a\cdot \n^2 u (t) + b)\dot u (t) + c\cdot \n \dot u(t) + d \ddot u (t)  \]
and $a,b,c,d$ are smooth functions which vanish when $u$, $\nabla u$ and $\dot{u}$ are $0$.

We will need suitable estimates on $|D^\perp r|^2$. 
Calculating as in \cite[Page 561]{SimonCone}, if $\|u\|_{C^1(\Sigma)}$ is sufficiently small then we may estimate 
\begin{equation} \label{Dperprdt}\frac{1}{2}\left|e^t \dot u \right|^2 \leq |D^\perp r|_x|^2\leq {2}\left|e^t \dot u \right|^2,
\end{equation}
and
\begin{equation} \label{Drperpintest}
\begin{split}
 \frac 1 2 \int_{\log \r}^{\log{\sigma}} \int_{\Sigma} |\dot u |^2 d\Ha^{n-1}dt \leq&\ \int_{L\cap(B_\sigma \setminus \ov B_\r)}\frac{|D^\perp r |^2}{r^n}d\Ha^n\\
 \leq&\ 2 \int_{\log\r}^{\log{\sigma}} \int_{\Sigma} |\dot u |^2 d\Ha^{n-1}dt\, .
 \end{split}
\end{equation}

\subsection{Extension lemma} We now prove the following extension lemma.
 \begin{lem}\label{extension}
 Suppose $\beta, \tau>1$. For every $\sigma>0$, there exists $\d(\sigma,\beta,\Sigma)>0$ such that if $\mu_{\infty}$ in \eqref{tinfinity} satisfies
 $n\mu_\infty = \Ha^{n-1}(\Sigma)$ and $L\cap (B_{e^{ \tau+1}}\setminus\ov{B_{e^\tau}})$ is a graph of $u$ on $\Sigma \times (\tau, \tau+1)$ such that  
 \[\|u\|_{C^{3}(\S\times(\tau,\tau+1))}<\sigma \text{ and } \sup_{t\in(\tau, \tau+1)}\|u(t)\|_{L^2(\Sigma)}\leq \delta,\]
 then $L\cap (B_{e^{\tau+\beta}}\setminus\ov{B_{e^\tau}})$ is a graph of $\tilde{u}$, which is an extension of $u$ such that
 \[\|\tilde{u}\|_{C^{3}(\S\times(\tau,\tau+\beta))}<\sigma  .\]
\end{lem}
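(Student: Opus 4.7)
The plan is to argue by contradiction through a compactness and unique continuation argument. Suppose the lemma fails for some fixed $\beta, \tau > 1$ and $\sigma > 0$: then there exist sequences $\delta_i \to 0$, minimal submanifolds $L_i \subset \R^{n+k}$, and graphs $u_i$ on $\Sigma \times (\tau, \tau+1)$ satisfying the hypotheses with $\delta = \delta_i$, yet for which no admissible extension $\tilde u_i$ exists on $\Sigma \times (\tau, \tau+\beta)$. For each $i$, let $T_i^* \in (\tau+1, \tau+\beta]$ be the supremum of $T$ such that $L_i$ is a graph of some $\tilde u_i$ extending $u_i$ on $\Sigma \times (\tau, T)$ with $\|\tilde u_i\|_{C^3} < \sigma$. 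By the failure assumption $T_i^* < \tau + \beta$, and by maximality one finds times $t_i$ arbitrarily close to $T_i^*$ at which $\|\tilde u_i(\cdot, t_i)\|_{C^3(\Sigma)}$ approaches $\sigma$.

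The first substantive step is a uniform local extension past $T_i^*$. Since $L_i$ is a smooth minimal submanifold and $\tilde u_i(\cdot, T_i^*)$ has $C^1$ norm at most $\sigma$, a quantitative implicit function argument, using the uniform second fundamental form bound coming from $\|\tilde u_i\|_{C^3} \leq \sigma$, produces $\eta > 0$ independent of $i$ such that $L_i$ remains the graph of an extension $\tilde u_i$ on $\Sigma \times (\tau, T_i^* + \eta)$ with $C^3$ norm bounded by, say, $2\sigma$. On this slightly enlarged cylinder $\tilde u_i$ satisfies the uniformly elliptic quasilinear equation \eqref{elliptic}, so interior Schauder estimates yield uniform $C^{3,\alpha}$ bounds on compact subsets. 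Passing to a subsequence, $T_i^* \to T_\infty \in [\tau+1, \tau+\beta]$ and $\tilde u_i \to \tilde u_\infty$ in $C^3_{\mathrm{loc}}$ on $\Sigma \times (\tau, T_\infty + \eta/2)$.

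The limit $\tilde u_\infty$ is a smooth solution of \eqref{elliptic}, and the hypothesis $\|u_i(t)\|_{L^2(\Sigma)} \leq \delta_i \to 0$ forces $\tilde u_\infty \equiv 0$ on $\Sigma \times (\tau, \tau+1)$. Classical Aronszajn-type unique continuation for the linear elliptic equation obtained by linearising \eqref{elliptic} at $\tilde u_\infty = 0$ (which has smooth coefficients) then forces $\tilde u_\infty \equiv 0$ on the whole connected cylinder $\Sigma \times (\tau, T_\infty + \eta/2)$. Since $t_i \to T_\infty$ is interior to this domain for large $i$, $C^3_{\mathrm{loc}}$ convergence yields $\|\tilde u_i(\cdot, t_i)\|_{C^3(\Sigma)} \to 0$, contradicting its accumulation to $\sigma > 0$.

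The main obstacle is the uniform local extension past $T_i^*$: interior Schauder estimates do not apply at the boundary $t = T_i^*$ of the graph's original domain, so one must enlarge the domain of $\tilde u_i$ by a universal amount in order to treat $T_i^*$ as interior. This is where the smoothness of $L_i$ combined with the $C^1$ smallness of $\tilde u_i(\cdot, T_i^*)$ is essential, and it must be quantified in terms of constants independent of $i$. The hypothesis $n\mu_\infty = \Ha^{n-1}(\Sigma)$ enters implicitly to identify $L^\infty$ as a multiplicity one reference cone over which the graphical representation \eqref{asgraph} is unambiguous, ensuring that the extension $\tilde u_i$ is well-defined.
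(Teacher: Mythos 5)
Your continuation argument has a genuine gap, and it shows up in two related places. First, the ``uniform local extension past $T_i^*$'' is not justified: the $C^3$ bound $\sigma$ only holds on the region where you already know $L_i$ is graphical, so you have no a priori curvature bound on $L_i$ just beyond radius $e^{T_i^*}$, and the implicit function theorem alone cannot extend the graphical representation by a uniform amount $\eta$. Obtaining such control beyond the known region is exactly the content of the lemma, and it requires an $\epsilon$-regularity input (Allard), which in turn needs the density of $L_i$ to be close to one there --- something your argument never establishes. For the same reason your characterisation of $T_i^*$ (``the $C^3$ norm approaches $\sigma$'') is unwarranted: being ``a graph'' in the sense of \eqref{asgraph} means that the graph exhausts $L\cap(B_\rho\setminus\ov B_\sigma)$, so graphicality can also fail because another sheet or component of $L_i$ enters the outer annulus, a scenario your unique-continuation step (which only sees the sheet containing the original graph) cannot rule out.

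Second, and symptomatically, you never really use the hypothesis $n\mu_\infty=\Ha^{n-1}(\Sigma)$ --- it does not serve merely to ``identify the reference cone'' (multiplicity one is part of the standing assumptions). If your argument were correct it would prove the lemma without this hypothesis, but then it is false: take $L'=L\cup P$ with $P$ an affine $n$-plane lying in $B_{e^{\tau+\beta}}\setminus B_{e^{\tau+1}}$ at distance $\sim e^{\tau+2}$ from the cone; $L'$ is minimal and satisfies the graphical hypothesis on the inner annulus, yet the conclusion fails, and the only hypothesis violated is the mass identity. The paper's proof uses this identity essentially: after rescaling, one takes a subsequential varifold limit $V$ of the hypothetical counterexample sequence, notes that $V$ has unit density and support $L^\infty$ on the inner annulus, and then combines $n\mu_\infty=\Ha^{n-1}(\Sigma)$ with the monotonicity identities \eqref{monotonicity2}--\eqref{monotonicitydiff} to conclude that $\int_{V\setminus B_{3/2}}|D^\perp r|^2 r^{-n}\,\rd\Ha^n=0$, i.e.\ $V$ is the unit-density cone $L^\infty$ on the whole exterior region; Allard's regularity theorem then gives graphicality with arbitrarily small $C^3$ norm on the larger annulus, a contradiction. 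Some global mass/monotonicity argument of this kind (not a purely local continuation plus unique continuation) is indispensable here, so the proposal as written does not prove the lemma.
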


\begin{proof}
By changing scale, we may assume that $\tau=0$. Furthermore, \eqref{monotonicity2} implies
\begin{equation}\Ha^n(L \cap B_{e^{\beta}})\leq c_1(\mu_{\infty},\beta).
 \label{areabound}
\end{equation}

Suppose the lemma is false. Then there exists a sequence of minimal $L^k$ with $L^k\cap (B_e\setminus \ov{B_1}) = G_{1,e}(h^k)$ as in \eqref{asgraph} 
such that $\|r^{-1}h^k\|_{C^2(B_e\setminus \ov{B_1})}\ra 0$ and \eqref{areabound} holds,  but $L^k\cap(B_4\setminus \ov{B_1})$ cannot be written as $G_{1,e^\beta}(\tilde{h})$ for any $\tilde{h}$ such that $\|\tilde u\|_{C^3}<\s$, where $\tilde{u}$ and $\tilde{h}$ are related as in \eqref{utoh}.  By \eqref{areabound} we may take a subsequence (also written $L^k$) which converges to some varifold $V$. On $B_e\setminus \ov{B_1}$, $V$ must have density equal to 1 and support equal to $L^\infty$. Due to the convergence of the $h^k$,  \[\left(\frac{3}{2}\right)^{1-n}\Ha^{n-1}(L^k \cap \partial B_\frac{3}{2})\ra \Ha^{n-1}(\Sigma) = n\mu_\infty,\] and $V$ equals $L^\infty$ on $B_e\setminus \ov B_1$. However, due to  \eqref{monotonicity2} and \eqref{monotonicitydiff},
\[\int_{(\bb{R}^{n+k}\setminus B_\frac{3}{2})\cap L^k}\frac{|D^\perp r|^2}{r^n}d\Ha^n = \mu_\infty - \frac{1}{n}\left(\frac{3}{2}\right)^{1-n}\int_{\partial B_\frac{3}{2}\cap L^k} |\n r| d\Ha^{n-1}\]
and so we see that 
\[\int_{V\cap(\bb{R}^{n+k}\setminus B_\frac{3}{2})}\frac{|D^\perp r|^2}{r^n}d\Ha^n =0,\]
i.e.~$V$ is a unit density cone. Allard's regularity theorem (see, e.g.~\cite[Chapter 5]{Simon.Lectures}) then implies that for sufficiently large $k$, $L^k$ is a graph on $B_{e^\beta}\setminus\ov B_1$, with arbitrarily small $C^3$ norm, a contradiction.
\end{proof}

\subsection{Growth estimates} We want to estimate the growth of
\[I(\r) := \int_{(\bb{R}^{n+k}\setminus B_{e^\r})\cap L} \frac{|D^\perp r|^2}{r^n }d\Ha^n.\]
Using the \Lo--Simon inequality \eqref{optimalLS}, we obtain the following.
\begin{lem}\label{growthest1}
 There exists a constant $\sigma(\S)>0$ such that if $L\cap(B_{e^{\r+1}}\setminus \ov{B_{e^\r}})$ is the graph of $u$ with $|u|_{C^3(\Sigma\times [\r,\r+1])}<\s$ then there is a constant $C(\Sigma)$ so that
 \[I(\r+1)\leq C(I(\r)-I(\r+1)).\]
\end{lem}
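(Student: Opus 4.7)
The stated inequality $I(\rho+1)\leq C(I(\rho+1)-I(\rho))$ has a non-positive right-hand side since $I$ is monotone decreasing in $\rho$ by \eqref{monotonicity2}, so I read the claim in its natural reverse-Poincar\'e form $I(\rho+1)\leq C(I(\rho)-I(\rho+1))$; iterating such an estimate yields exponential decay of the mass defect $I(\rho)$, which drives Theorem \ref{decayestimate}. The plan combines three ingredients: (i) the pointwise comparison \eqref{Drperpintest} between $|D^\perp r|^2$ and $|\dot u|^2$; (ii) the uniformly elliptic recasting \eqref{elliptic} of the minimality of $L$ in logarithmic polar coordinates; and (iii) the optimal \Lo--Simon inequality \eqref{optimalLS}.

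First, by \eqref{Drperpintest} and \eqref{monotonicity2},
\[
I(\rho)-I(\rho+1)=\int_{L\cap(B_{e^{\rho+1}}\setminus B_{e^\rho})}\frac{|D^\perp r|^2}{r^n}\,d\mathcal{H}^n\geq\tfrac12\int_\rho^{\rho+1}\|\dot u(t)\|_{L^2(\Sigma)}^2\,dt,
\]
so it suffices to bound $I(\rho+1)$ by a multiple of $\int_\rho^{\rho+1}\|\dot u(t)\|_{L^2(\Sigma)}^2\,dt$. To transfer the tail $I(\rho+1)$ to local quantities, I apply the first variation identity $n\mu(\Omega)=\int_{\partial\Omega}\mathbf{x}\cdot\nu$ for minimal $L$ with $\Omega=L\cap B_{e^{\rho+1}}$. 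Writing $|\nabla r|=\sqrt{1-|D^\perp r|^2}$ and dividing by $e^{n(\rho+1)}$ then expresses $nI(\rho+1)$ as $(n\mu_\infty-\mathcal{E}(u(\rho+1)))$ plus a boundary correction $e^{(1-n)(\rho+1)}\int_{L\cap\partial B_{e^{\rho+1}}}|D^\perp r|^2 d\mathcal{H}^{n-1}$ which, via \eqref{Dperprdt} and the area formula on $\Sigma$, is controlled by $\|\dot u(\rho+1)\|_{L^2(\Sigma)}^2$.

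Next I apply \eqref{optimalLS} at $t=\rho+1$ to obtain $|\mathcal{E}(u(\rho+1))-n\mu_\infty|\leq C\,\|\mathcal{M}(u(\rho+1))\|_{L^2(\Sigma)}^2$. Using \eqref{elliptic} in the form $\mathcal{M}(u)=-\ddot u-n\dot u-\mathcal{R}(u)$ and the $C^3$ smallness $|u|_{C^3}<\sigma$ (which gives $\|\mathcal{R}(u(t))\|_{L^2}\leq C\sigma(\|\dot u\|_{L^2}+\|\ddot u\|_{L^2})$), I obtain
\[
|\mathcal{E}(u(\rho+1))-n\mu_\infty|\leq C\bigl(\|\ddot u(\rho+1)\|_{L^2}^2+\|\dot u(\rho+1)\|_{L^2}^2\bigr).
\]
Finally, since $u$ satisfies the uniformly elliptic equation \eqref{elliptic} in $(t,\omega)$, I use interior $L^2$ estimates for $\ddot u$ together with a mean-value choice of $t^{*}\in[\rho,\rho+1]$ at which instantaneous $\|\dot u(t^{*})\|^2$ and $\|\ddot u(t^{*})\|^2$ are bounded by their $[\rho,\rho+1]$-averages; combined with the monotonicity $I(\rho+1)\leq I(t^{*})$, this translates the pointwise-in-$t$ bound into
\[
I(\rho+1)\leq C\int_\rho^{\rho+1}\|\dot u(t)\|_{L^2(\Sigma)}^2\,dt,
\]
as required.

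The main obstacle is the hole-filling character of the argument: $I(\rho+1)$ integrates over the portion of $L$ beyond $B_{e^{\rho+1}}$, where no graphical structure is assumed, yet must be bounded by a quantity on the shell $[\rho,\rho+1]$ alone. This works only because the \emph{global} minimality of $L$ enters through the first variation identity, and because \eqref{optimalLS} is the \emph{optimal} (quadratic) \Lo--Simon inequality---a non-optimal H\"older exponent would at best yield logarithmic, rather than exponential, decay for $I$, corresponding to the weaker conclusion $|v|\leq C r/\log^p r$ mentioned in the remark after Theorem \ref{decayestimate}.
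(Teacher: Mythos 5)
Your proposal is correct and takes essentially the same route as the paper: you re-derive the identity behind \eqref{monotonicity2}--\eqref{monotonicitydiff} from the first variation to write $nI(t)$ as $\mathcal{E}(0)-\mathcal{E}(u(t))$ plus a boundary term controlled by $\|\dot u(t)\|_{L^2(\Sigma)}^2$, then apply the optimal \Lo--Simon inequality \eqref{optimalLS} together with the elliptic equation \eqref{elliptic}, and finally use the linearized equation for $\dot u$, interior elliptic estimates and \eqref{Drperpintest} to reach a multiple of $I(\rho)-I(\rho+1)$, exactly as in the paper's proof. You also correctly read the conclusion in its intended form $I(\rho+1)\leq C\bigl(I(\rho)-I(\rho+1)\bigr)$ (the sign in the displayed statement is reversed), and your mean-value choice of an interior slice $t^{*}$ combined with $I(\rho+1)\leq I(t^{*})$ is precisely the way to make rigorous the paper's terse final appeal to ``elliptic estimates''.
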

\begin{proof}
Applying \eqref{monotonicitydiff} to \eqref{monotonicity2}, and using \eqref{energydensity} we see that for almost every $\r$,
\begin{flalign*}
n\int_{(\bb{R}^{n+k}\setminus B_{e^\r})\cap L}\frac{|D^\perp r |^2}{r^n}d\Ha^n &= n\mu_\infty - \r^{1-n}\int_{\partial B_{e^\r}\cap L} |\n r|d\Ha^{n-1}\\
&= \mathcal{E}(0) - \mathcal{E}(\r) + \r^{1-n}\int_{\partial B_{e^\r}\cap L} 1-|\n r|d\Ha^{n-1}\\
&\leq |\mathcal{E}(0) - \mathcal{E}(\r)|+\r^{1-n}\int_{\partial B_{e^\r}\cap L} |D^\perp r|^2 d\Ha^{n-1}  .
\end{flalign*}
We now choose $\s$ sufficiently small so that \eqref{optimalLS}--\eqref{Drperpintest} hold. We see that
\begin{flalign*}
n\int_{(\bb{R}^{n+k}\setminus B_{e^\r})\cap L}\frac{|D^\perp r |^2}{r^n}d\Ha^n&\leq \|\mathcal{M}(u(\r))\|^2_{L^2(\Sigma)}+\r^{1-n}\int_{\partial B_{e^\r}\cap L} |D^\perp r|^2 d\Ha^{n-1}\\
&\leq \int_{\Sigma}|\mathcal{M}(u(\r))|^2 + C|\dot u(\r)|^2 d\Ha^{n-1}\\
&=\int_{\Sigma}|\ddot u(\r)|^2+ |\dot u(\r)|^2 + \mathcal{R}^2(u(\r)) + 2|\dot u(\r)|^2 d\Ha^{n-1}\ .
\end{flalign*}
Now (possibly making $\s$ smaller) we may finally estimate the $\mathcal{R}$ term to give
\begin{flalign}\label{eq:growth.ineq}
\int_{L\cap(\bb{R}^{n+k}\setminus B_{e^\r})}\frac{|D^\perp r |^2}{r^n}d\Ha^n\leq C\int_{\Sigma}|\ddot u(\r)|^2+ |\nabla \dot u(\r)|^2 + |\dot u(\r)|^2 d\Ha^{n-1}\ .
\end{flalign}

If we differentiate \eqref{elliptic} with respect to $t$ we see that $\mathcal{L}\dot{u}=0$ where $\mathcal{L}$ is linear and uniformly elliptic.  Hence, we may use elliptic estimates in \eqref{eq:growth.ineq} and inequality \eqref{Drperpintest} to yield the claimed result.
\end{proof}

We have the following corollary of Lemma \ref{growthest1}.
\begin{cor}\label{growthest2}  Let $\sigma$ be as in Lemma \ref{growthest1}.
 Suppose that for $0<\r<\tau$, $$\sup_{t\in [\r,\tau]}\|u(t)\|_{C^3(\Sigma)}<\s.$$
 There exist positive constants $c_1$, $c_2$, $C$ depending on $\Sigma$ such that for all $\kappa\in[\r,\tau]$
\begin{align}
  I(\kappa )&\leq Ce^{c_1(\r-\kappa)}I(\r), \label{firstineq}\\
  \|u(\kappa) - u(\tau)\|_{L^2(\S)}&\leq Ce^{c_2(\r-\kappa)}\sqrt{I(\r)}\label{secondineq}\ .
\end{align}
\end{cor}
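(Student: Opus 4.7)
The plan is to derive both inequalities from Lemma \ref{growthest1} by iteration, using interior elliptic estimates for $\dot u$ to upgrade the integrated $L^2$-in-$t$ control (encoded in $I$ via \eqref{Drperpintest}) into a pointwise-in-$t$ bound.

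The first inequality follows by routine iteration of Lemma \ref{growthest1}, which after rearrangement reads $I(\rho' + 1) \leq \gamma\, I(\rho')$ for some $\gamma = \gamma(\Sigma) \in (0,1)$ and is available whenever $[\rho', \rho' + 1] \subseteq [\rho, \tau]$. Iterating $m := \lfloor \kappa - \rho \rfloor$ times from $\rho$ yields $I(\rho + m) \leq \gamma^{m} I(\rho)$, and the monotonicity of $\rho \mapsto I(\rho)$ (it is the tail integral of a nonnegative integrand) interpolates to intermediate $\kappa \in [\rho + m, \rho + m + 1]$. Setting $c_1 := -\log \gamma > 0$ and absorbing one factor of $e^{c_1}$ into $C$ gives $I(\kappa) \leq C e^{c_1(\rho - \kappa)} I(\rho)$.

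For the second inequality, the strategy is to extract pointwise-in-$t$ exponential decay of $\|\dot u(t)\|_{L^2(\Sigma)}$ from the first inequality and then integrate via the fundamental theorem of calculus. Differentiating \eqref{elliptic} in $t$ gives $\mathcal L \dot u = 0$, where $\mathcal L$ is linear and uniformly elliptic with coefficients controlled by $\|u\|_{C^3} < \sigma$. Standard interior estimates on the cylinder $\Sigma \times [t - \tfrac{1}{2}, t + \tfrac{1}{2}]$, combined with \eqref{Drperpintest}, yield
\[
\|\dot u(t)\|_{L^2(\Sigma)}^{2} \leq C \int_{t - 1/2}^{t + 1/2} \|\dot u(s)\|_{L^2(\Sigma)}^{2}\, ds \leq C\bigl(I(t - \tfrac{1}{2}) - I(t + \tfrac{1}{2})\bigr) \leq C\, I(t - \tfrac{1}{2}).
\]
Substituting the first inequality with $\kappa$ replaced by $t - \tfrac{1}{2}$ then gives $\|\dot u(t)\|_{L^2(\Sigma)} \leq C e^{c_1(\rho - t)/2} \sqrt{I(\rho)}$ for $t$ in the interior of $[\rho, \tau]$.

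The desired estimate now follows by writing $u(\kappa) - u(\tau) = -\int_\kappa^\tau \dot u(t)\, dt$, taking $L^2(\Sigma)$-norms under the integral sign, and substituting the pointwise bound:
\[
\|u(\kappa) - u(\tau)\|_{L^2(\Sigma)} \leq C \sqrt{I(\rho)} \int_\kappa^\tau e^{c_1(\rho - t)/2}\, dt \leq C e^{c_2(\rho - \kappa)} \sqrt{I(\rho)}
\]
with $c_2 := c_1 / 2$. The only mildly delicate step is the interior estimate for $\dot u$: one needs uniform control of the coefficients of $\mathcal L$, secured by $\|u\|_{C^3} < \sigma$, and the endpoint issues (where $t - \tfrac{1}{2}$ might fall below $\rho$, or $t + \tfrac{1}{2}$ might exceed $\tau$) are harmless because they concern intervals of length $\leq 1$ that can be treated trivially by the monotonicity of $I$ and absorbed into $C$.
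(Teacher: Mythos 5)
Your proof is correct, and for \eqref{firstineq} it is essentially the paper's argument: Lemma \ref{growthest1} is rearranged into a fixed geometric decay factor per unit interval (the paper packages this rearrangement as an elementary inequality for $\log(b/a)$), iterated, and interpolated using that $I$ is nonincreasing. For \eqref{secondineq} you take a genuinely different route. The paper never produces a pointwise-in-$t$ bound on $\|\dot u(t)\|_{L^2(\Sigma)}$: after bounding $\|u(\kappa)-u(\tau)\|_{L^2(\Sigma)}\leq\int_\kappa^\tau\|\dot u(t)\|_{L^2(\Sigma)}\,dt$ it applies a weighted Cauchy--Schwarz inequality and controls $\int_\kappa^\tau e^{\frac{c_1}{2}(t-\rho)}\|\dot u(t)\|^2_{L^2(\Sigma)}\,dt$ by integration by parts, using only \eqref{Drperpintest} (which converts $\int\|\dot u\|^2_{L^2(\Sigma)}$ over an interval into an increment of $I$) together with \eqref{firstineq}; this yields $c_2=c_1/2$ with no further PDE input. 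You instead differentiate \eqref{elliptic} and invoke interior elliptic estimates for $\dot u$ (the same device the paper uses inside the proof of Lemma \ref{growthest1} itself) to upgrade to the pointwise bound $\|\dot u(t)\|_{L^2(\Sigma)}\leq Ce^{\frac{c_1}{2}(\rho-t)}\sqrt{I(\rho)}$, and then integrate. This works and gives the same constants, but it costs an extra appeal to elliptic theory and forces the endpoint discussion you flag: for $t$ within $1/2$ of $\tau$ (or of $\rho$) no two-sided cylinder inside $[\rho,\tau]$ is available, so the pointwise bound is simply absent there; the correct patch is not monotonicity of $I$ alone but Cauchy--Schwarz in $t$ on the short leftover interval combined with \eqref{Drperpintest} and \eqref{firstineq}, e.g.\ $\int_{\tau-1/2}^{\tau}\|\dot u\|_{L^2(\Sigma)}\,dt\leq C\sqrt{I(\tau-1/2)}\leq Ce^{\frac{c_1}{2}(\rho-\kappa)}\sqrt{I(\rho)}$, which is routine, so there is no gap. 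The same observation shows you could dispense with elliptic estimates entirely: chop $[\kappa,\tau]$ into unit intervals and apply Cauchy--Schwarz on each, giving $\int_{\kappa+j}^{\kappa+j+1}\|\dot u\|_{L^2(\Sigma)}\,dt\leq C\sqrt{I(\kappa+j)}\leq Ce^{\frac{c_1}{2}(\rho-\kappa-j)}\sqrt{I(\rho)}$, and sum the geometric series; this is arguably the lightest proof, while the paper's weighted integration by parts buys the same result without any pointwise-in-$t$ information.
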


\begin{proof}
Suppose $0<a<b\leq 1$ are real numbers so there is a constant $c$ such that $a\leq c(b-a)$. It is elementary to deduce that $\log \left(b/a\right) \geq \log(1+c)$.  
This inequality and Lemma \ref{growthest1} quickly yield \eqref{firstineq}. 
 The second inequality \eqref{secondineq} follows by integration by parts applied to $\int_\kappa^\tau e^{\frac{c_1}{2} (t-\r)} \|\dot u(t)\|_{L^2(\Sigma)}^2 dt$. 
\end{proof}

\subsection{Proof of Theorem \ref{decayestimate}}
Pick $\s, C, c_2$ as in Corollary \ref{growthest2}, and then pick $\delta$ as in Lemma \ref{extension} where we choose $\beta = 2$. Using that $I(\r)$ is nonincreasing, we choose $\r_1$ sufficiently large so that for all $\r>\r_1$, $\sqrt{I(\r)}<\frac{\d}{4C}$. 
 
 Due to the convergence of the rescalings of $L$ to the blow-down $L^\infty$, there exists a rescaling $L^i$ so that in the annulus $B_{e^2}\setminus \ov{B_e}$, $L^i$ and $L^\infty$ are arbitrarily close in Radon measure. Applying Allard's regularity theorem (see e.g.~\cite[Chapter 5]{Simon.Lectures}), we can ensure that $L^i$ has arbitrarily small $C^3$-norm as a graph of $h$ over $L^\infty$. Rescaling back to $L$, we see that for all $\e>0$ there exists $\r_2>\r_1$ such that $L\cap(B_{e^{\r_2+1}}\setminus \ov{B_{e^{\r_2}}})$ is a graph of $u$ in logarithmic polar coordinates on $\Sigma\times[\r_2, \r_2+1]$ such that $\|u\|_{C^3(\S)}<\e$. We now choose $\e<\sigma$ sufficiently small so that $\|u(\r_2)\|_{L^2(\Sigma)}<\frac{\delta}{2}$.
 
 We are now able to apply Lemma \ref{extension} to extend $u$ to $\tilde{u}$ on $\Sigma\times[\r_2, \r_2+3]$ such that $\|\tilde{u}\|_{C^3(\Sigma\times(\r_2, \r_2+3))}<\s$. Moreover, \eqref{secondineq} implies that, for any $t\in(\r_2, \r_2+3)$,
 \[\|\tilde{u}(t)\|_{L^2(\Sigma)}\leq \|u(\r_2)\|_{L^2(\Sigma)}+ Ce^{c_2(\r_2-t)}\sqrt{I(\r_2)}<\frac{\d}{2}+\frac{\d}{2}=\d.\]
 We may therefore  apply Lemma \ref{extension} again and iterate to see that $u$ may be extended to $\Sigma\times(\r_2, \infty)$, such that $\|u\|_{C^3(\Sigma\times(\r_2, \infty))}<\s$. 
 
 We now obtain asymptotics for $u$. By Arzel\`a--Ascoli and Schauder estimates, there exists a sequence $t_i\ra\infty$ such that $u(t_i)\ra w$ smoothly. We deduce from \eqref{secondineq} that  $u(t)\ra w$ smoothly (since $I(\r)\ra 0$ as $\r\ra\infty$). Hence $w\equiv 0$, as otherwise the blow-down of $L$ would not be $L^\infty$. Taking the limit in \eqref{secondineq} gives 
 \[\|u(t)\|_{C^0(\Sigma)}\leq C\|u(t)\|_{L^2(\Sigma)}\leq Ce^{-c_2t}.\]
Recalling that $r = e^t$, we see that $|h|\leq Cr^{1-c_2}$ which implies Theorem \ref{decayestimate}.

\section{Special Lagrangian surfaces}\label{sectionSLsurfaces}

We saw in Lemma \ref{lem:SL.cplx} that special Lagrangians $L$ in $\C^2$ are hyperk\"ahler rotations of holomorphic curves.  
Here we show that if $L$ has bounded area ratios (i.e.~satisfies \eqref{eq:bounded.area.ratios}) then it is a hyperk\"ahler rotation of an \emph{algebraic curve}. 
 This  implies that the structure of the blow-down of $L$ (which is a union of planes) determines an explicit, finite-dimensional set of possibilities for $L$ given by zero sets of polynomials, and we can bound the total curvature of $L$ by a quadratic in $d$.
 
 Our methods further allow us to classify special Lagrangians with bounded area ratios in $\C^2$ with a blow-down given by two planes counted with multiplicty.

\subsection{Algebraic curves}

An algebraic curve 
$\Cu_P\subseteq\bb{C}^2$ is the zero set of the polynomial $P(x,y)$, that is
\[\Cu_P=\{(x,y)\in \bb{C}^2: P(x,y)=0\} ,\]
where, for some $c_{ij}\in\C$, 
\begin{equation}\label{eq:P.poly}
P(x,y) = \sum c_{ij}x^iy^j.
\end{equation}
We say that a non-constant polynomial $P$ is \emph{irreducible} if it cannot be written as a product of two non-constant polynomials.
It will be useful to write $P$ in the form
\begin{equation}P=P_d+P_{d-1} +\ldots + P_1+P_0 ,
 \label{degreepoly}
\end{equation}
where each $P_j$ is a \emph{homogeneous} polynomial of degree $j$ for $0\leq j\leq d$. We may then identify $P(x,y)$ with a homogeneous polynomial in three variables  
defined by
\[\tilde{P}(x,y,z) = \sum c_{ij}x^iy^jz^{d-i-j}=P_d(x,y) +zP_{d-1}(x,y)+\ldots z^{d-1}P_1(x,y) +z^d P_0 .\]
Observe that $\tilde{P}$ is irreducible if and only if $P$ is.  
Since $\tilde{P}$ is homogeneous, given an algebraic curve $\Cu_P\subseteq\bb{C}^2$, we can define the projective compactification
\[\tilde{\Cu}_{\tilde{P}}=\{[x,y,z]\in\bb{CP}^2: \tilde{P}(x,y,z)=0\} \]
of $\Cu_P$, which is an algebraic curve in $\bb{CP}^2$. We define its \emph{points at infinity} to be
\[\tilde{\Cu}_{\tilde{P}}^\infty :=\tilde{\Cu}_{\tilde{P}}\cap\{[x,y,z]\in \bb{CP}^2: z=0\} .\]
Bezout's theorem 
implies that if $\tilde{P}$ is irreducible then $\tilde{\Cu}^\infty_{\tilde{P}}$ is a finite set (unless $\tilde{P}$ is a multiple of $z$).

We let $\Sing(\tilde{\Cu}_{\tilde{P}})$ denote the singular points of an irreducible projective curve $\tilde{\Cu}_{\tilde{P}}$; i.e.~the points $[x,y,z]$ on $\tilde{\Cu}_{\tilde{P}}$ such that 
\begin{equation*}\pard{\tilde{P}}{x}\Big|_{(x,y,z)}=\pard{\tilde{P}}{y}\Big|_{(x,y,z)}=\pard{\tilde{P}}{z}\Big|_{(x,y,z)}=0.
 \end{equation*}
Similarly, we let $\Sing(\Cu_P)$ denote the singular points of an irreducible algebraic curve $\Cu_P$. Even when $\Sing(\mathcal{C}_P)=\emptyset$, $\Sing(\tilde{\Cu}_{\tilde{P}})$ may be non-empty.  We therefore say that $\Cu_P$ has  \emph{no singularity at infinity} if 
$\Sing(\tilde{\Cu}_{\tilde{P}})\cap \tilde{\Cu}_{\tilde{P}}^{\infty}=\emptyset$.

\subsection{Algebraic properties of special Lagrangian surfaces}

We first make the following key observation.

\begin{prop}\label{complextoalgebraic}
 Suppose that $L$ is a 
 special Lagrangian with bounded area ratios in $\C^2$. Then after a hyperk\"ahler rotation, $L$ is  an algebraic curve.
 \end{prop}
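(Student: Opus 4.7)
The plan is to invoke Lemma \ref{lem:SL.cplx} and reduce the statement to the classical fact that a complex analytic subvariety of $\C^2$ with quadratic area growth must be algebraic. The main technical hurdle is converting the integral area bound into genuine polynomial pointwise control on the curve, which is the substantive content underlying algebraicity.

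First I would apply Lemma \ref{lem:SL.cplx}: after a hyperk\"ahler rotation, $L$ becomes a (possibly singular) holomorphic curve in $\C^2$. Since this rotation is a Euclidean isometry, the bounded area ratios hypothesis \eqref{eq:bounded.area.ratios} persists, so in particular $\mathcal{H}^2(L\cap B_r(0)) \leq Cr^2$ for all $r>0$.

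From here, the cleanest route is to cite Stoll's algebraicity theorem: a pure $p$-dimensional closed complex analytic subset $V \subseteq \C^n$ is algebraic if and only if $\mathcal{H}^{2p}(V\cap B_r) = O(r^{2p})$. Applied to our $L$ with $p=1$, $n=2$ this immediately produces a polynomial $P$ with $L = \Cu_P$. Equivalently, one combines Bishop's extension theorem (polynomial growth extends $L$ across the hyperplane at infinity to a complex analytic subset of $\C\bb{P}^2$) with Chow's theorem (every complex analytic subset of $\C\bb{P}^2$ is projective algebraic), and then restricts back to the affine chart.

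For a self-contained argument, I would proceed as follows. After a generic unitary change of complex coordinates, the projection $\pi:\C^2\to\C$ onto the second coordinate restricts to a proper holomorphic map $\pi|_L : L \to \C$; the area bound excludes all but a discrete set of asymptotic directions in which $L$ could be unbounded over a compact base, and a generic $\pi$ avoids them. By the coarea formula, $\pi|_L$ is then a branched covering of some finite degree $d$. Writing a regular fibre as $\{x_1(y),\dots,x_d(y)\}$, the polynomial
\begin{equation*}
P(x,y) = \prod_{i=1}^d(x-x_i(y)) = x^d + a_{d-1}(y)x^{d-1} + \cdots + a_0(y)
\end{equation*}
has coefficients that are symmetric in the $x_i$, hence holomorphic on the complement of the (discrete) branch locus and extending holomorphically across it by Riemann removability. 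The hard part is to show each $a_j$ is a polynomial, for which I would use a packing argument: if some sheet had $|x_i(y_0)| \geq M$ with $|y_0|\leq R$, then the graphical piece near that sheet would contribute area on the order of $M^2$ to $L\cap B_{C(R+M)}$, and the quadratic area bound forces $M \leq C(1+R)^k$ for some $k$. This yields $\max_i |x_i(y)| \leq C(1+|y|)^k$, so each $a_j$ is an entire function of polynomial growth, hence a polynomial by Liouville. Therefore $L \subseteq \Cu_P$, and equality follows on each irreducible component for dimensional reasons.
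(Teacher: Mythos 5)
Your proposal is correct and its main route is exactly the paper's argument: apply Lemma \ref{lem:SL.cplx} to realise $L$ as a holomorphic curve after a hyperk\"ahler rotation (an isometry, so \eqref{eq:bounded.area.ratios} is preserved), and then invoke Stoll's theorem that a one-dimensional analytic set in $\C^2$ with quadratic area growth is algebraic. The alternative Bishop--Chow route and the self-contained branched-covering sketch are extra and not needed; the paper stops at the citation to Stoll.
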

\begin{proof}
 Lemma \ref{lem:SL.cplx} states that after a hyperk\"ahler rotation, $L$ is a holomorphic curve $C$.  Since we have assumed that $L$ satisfies \eqref{eq:bounded.area.ratios}, we may now apply \cite[Theorem 3]{Stoll} (in that paper $V(r,M)=\int_{M\cap B_r(0)} d\Ha^2$), which implies that $C$ is algebraic.
\end{proof}
We say $L$ is represented by an algebraic curve $\Cu_P$ if $\Cu_P$ and $L$ are equal as subsets of $\C^2$.
 Recall the definition of connected components in Remark \ref{rmk:components.cone}.
\begin{lem}\label{irreducible}
 Let $L$ be a connected special Lagrangian with bounded area ratios in $\bb{C}^2$. Then $L$ may be represented by  $\Cu_P$ where $P$ is irreducible.
\end{lem}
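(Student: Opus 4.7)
The plan is to apply Proposition \ref{complextoalgebraic} to realise $L$, after a hyperkähler rotation, as the zero locus $\mathcal{C}_Q$ of a polynomial $Q$, and then to use the connectedness of $L$ to force $Q$ to have a single irreducible factor up to multiplicity. Concretely, I would decompose $Q=\prod_{i=1}^k P_i^{n_i}$ into pairwise distinct irreducible factors over $\mathbb{C}$; as sets $\mathcal{C}_Q=\bigcup_{i=1}^k \mathcal{C}_{P_i}$, so the task reduces to showing $k=1$, after which $L=\mathcal{C}_{P_1}$ with $P_1$ irreducible.

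For the main step, I would invoke Bezout's theorem: whenever $i\ne j$, coprimality of $P_i$ and $P_j$ forces $\mathcal{C}_{P_i}\cap\mathcal{C}_{P_j}$ to be a finite set, so there is a finite set $F\subset L$ with $L\setminus F=\bigsqcup_{i=1}^k(\mathcal{C}_{P_i}\setminus F)$ a genuine disjoint union. Each $\mathcal{C}_{P_i}\setminus F$ is connected, since an irreducible complex algebraic curve is Euclidean-connected (via connectedness of its normalization as a Riemann surface) and removing finitely many points from a real $2$-dimensional set preserves connectedness. Viewing $L$ as the image of an immersion $\iota:\widetilde L\to\mathbb{C}^2$ from a connected $2$-manifold $\widetilde L$, as per the convention of Remark \ref{rmk:components.cone}, the preimage $\iota^{-1}(F)$ is discrete, so $\widetilde L\setminus\iota^{-1}(F)$ remains connected, and its image under $\iota$ must therefore lie entirely in a single component of $L\setminus F$. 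This forces $k=1$ and completes the proof.

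The main obstacle is the interplay between the abstract immersion and its image: one needs the connectedness convention of Remark \ref{rmk:components.cone} to propagate through the removal of the finite singular set, which relies on $\iota$ being locally injective (so that $\iota^{-1}(F)$ really is discrete) and on the irreducible components $\mathcal{C}_{P_i}$ being genuinely connected after removing finitely many points. A slicker alternative, which avoids the bookkeeping at singular points entirely, is to note that after the hyperkähler rotation $\iota$ is holomorphic, and the image of a connected complex manifold under a holomorphic map is an irreducible analytic set; combined with algebraicity from Proposition \ref{complextoalgebraic}, this immediately yields irreducibility of the defining polynomial.
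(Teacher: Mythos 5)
Your main argument is correct and essentially the same as the paper's: factor the defining polynomial into irreducible pieces, use Bezout's theorem to make the pairwise intersections finite, and use connectedness of the (domain of the) immersion minus the discrete preimage of those intersection points to force a single factor -- the paper phrases this as a contradiction with a minimal-degree representation and handles repeated factors as a separate case, but the content is identical. One caution on your closing aside only: the image of a connected complex manifold under a holomorphic map need not be an analytic set without a properness hypothesis (Remmert's proper mapping theorem), so the ``slicker alternative'' is not an off-the-shelf fact; your main proof, however, does not rely on it.
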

\begin{proof}
 Suppose, for a contradiction, that $L$ is represented by $\Cu_P$ where $P=\prod_{j=1}^lQ_j$ for $Q_j$ irreducible and degree at least one for $j\in \{1, \ldots,l\}$, and suppose, without loss of generality, that there is no representation with $P$ of lower degree. By Bezout's theorem either there exists $i,j\in \{1, \ldots,l\}$ such that $\Cu_{Q_i}\subset \Cu_{Q_j}$ or for all $i, j \in \{1, \ldots,l\}$ with $i\neq j$,  $\Cu_{Q_i} \cap \Cu_{Q_j}=E_{ij}$ where $E_{ij}$ is a finite set of points. 
 
 In the first case, $L$ is represented by $\Cu_{\widehat{P}}$ where $\widehat{P}=\frac{P}{Q_i}$. As $\widehat{P}$ has lower degree, this is a contradiction.
 
 In the second case, let $E$ be the union of the $E_{ij}$.  For all $j\in \{1, \ldots,l\}$, $\Cu_{Q_j}$ is closed in $\Cu_P$, hence $\iota^{-1}(\Cu_{Q_j})$ is closed in $L$, where $\iota$ is the immersion of $L$ in $\C^2$. Since $\iota$ is an immmersion and $E
 $ is a discrete set of points,  $L\setminus\iota^{-1}(E)$ is path connected. As $L=\cup_{j=1}^l\iota^{-1}(\Cu_{Q_j})$, we see that $L\setminus\iota^{-1}(E)$ is the disjoint union of the open sets $\iota^{-1}(\Cu_{Q_j}\setminus E)$. This contradicts connectedness if $l>1$.
 \end{proof}

\noindent By Lemma \ref{irreducible}, we may now restrict to irreducible polynomials $P$.

By \eqref{degreepoly}, for any $P$ of degree $d$, we may write $P=P_d+Q$  where $Q$ is of at most degree $d-1$.  Clearly if both $|x|$ and $|y|$ are large, $P_d$ will dominate $Q$. We will show that $P_d$ corresponds to the blow-down of $\mathcal{C}_P$.

For $P$ in \eqref{eq:P.poly} and $\l>0$ we consider the scalings $\l\Cu_P$.  Notice  $\l\Cu_P=\Cu_{P^{\lambda}}$,
where 
\begin{equation}\label{eq:P.lambda}
P^\l(x,y) =\l^dP\left(\frac{x}{\l}, \frac{y}{\l}\right) =\l^d \sum_{i,j} \frac{c_{ij}}{\l^{i+j}}x^iy^j.
\end{equation}
 An easy lemma \cite[Lemma 2.8]{Kirwan} implies there exists a positive integer $D\leq d$ and, for $j=1,\ldots,D$, integers $1\leq m_j\leq d$ with $\sum_{j=1}^Dm_j=d$ and $\alpha_j, \beta_j\in\bb{C}$ such that
\begin{equation}P_d(x,y) = \prod_{j=1}^D(\a_jx+\b_j y)^{m_j},
 \label{FactorP0}
\end{equation}
where there does not exist $\mu\in \bb{C}$ such that $\mu(\a_j, \b_j) = (\a_k,\b_k)$ for $j\neq k$. Hence, $\mathcal{C}_{P_d}$ is a union of $D$ transverse planes with multiplicities $m_j$ for $j=1,\ldots,D$ and
\begin{equation}P^\l = P_d +\l Q_\l = \prod_{j=1}^D(\a_jx+\b_j y)^{m_j} +\l Q_\l ,
 \label{blowdownpolyform}
\end{equation}
where $Q_\l$ is a polynomial with coefficients which are bounded as $\l\ra0$.

We now have the following, where we observe that we may associate a Radon measure $\mu_{P}$ to $\Cu_{P}$: for any Borel set $S\subset\bb{C}^2$,
\[\mu_{P}(S) = \int_{\Cu_{P}\cap S} m_x({P}) d\mathcal{H}^2(x),\]
where $m_x({P})$ is the multiplicity of the zero of ${P}$ at $x$. 
\begin{prop}\label{P0limit}
 Suppose that $P$ is irreducible and written as $P=P_d+Q$ where $P_d$ is homogeneous of degree $d$ and $Q$ is of degree at most $d-1$. Then $\lambda\Cu_P \ra \Cu_{P_d}$ in the sense of Radon measures as $\l\ra 0$; i.e.~the blow-down of $\Cu_P$ is $\Cu_{P_d}$.  Moreover, locally away from the origin, $\lambda\Cu_P$ can be written as a multivalued graph over $\Cu_{P_d}$ which converges smoothly to $0$ as $\lambda\ra 0$.
 
 Hence, writing $P$ as in \eqref{blowdownpolyform}, the blow-down of $\Cu_P$ consists of $D$ transverse planes with multiplicities $m_j$ for $j=1,\ldots,D$, which determines $P_d$ up to scale.
\end{prop}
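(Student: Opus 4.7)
The plan is to exploit the splitting $P^\lambda = P_d + \lambda Q_\lambda$ from \eqref{blowdownpolyform}: the coefficients of $Q_\lambda$ are themselves polynomials in $\lambda$ and so stay bounded as $\lambda\to 0$, giving $P^\lambda \to P_d$ in $C^\infty_{\mathrm{loc}}(\C^2)$ (indeed holomorphically). All the work is in promoting this coefficient-level convergence to smooth convergence of the zero sets as multivalued graphs and to Radon measure convergence on all of $\C^2$.

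First I would establish the local graph picture away from the origin. Any two of the distinct complex lines $L_j := \{\alpha_j x + \beta_j y = 0\}$ meet only at $0$, so any point $p_0 \in \Cu_{P_d}\setminus\{0\}$ lies on exactly one of them, say $L_j$. Choose complex coordinates $(u,v)$ around $p_0$ with $L_j = \{u = 0\}$. Then, on a small polydisc,
\[
P^\lambda(u,v) = u^{m_j} g(u,v) + \lambda Q_\lambda(u,v),
\]
where $g$ is holomorphic with $g(0,0)\neq 0$. Shrinking the polydisc so that $g$ is bounded below, any zero satisfies $|u|\leq C|\lambda|^{1/m_j}$, and Rouch\'e's theorem applied slice-by-slice in $u$ shows that for each $v$ the polynomial $u\mapsto P^\lambda(u,v)$ has exactly $m_j$ zeros near $u=0$. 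This exhibits $\Cu_{P^\lambda}$ near $p_0$ as an $m_j$-valued holomorphic graph over $L_j$. Cauchy estimates applied to the elementary symmetric functions of the $m_j$ roots (which are genuine holomorphic functions of $(v,\lambda)$ on the polydisc) upgrade the $C^0$ bound $|u|\leq C|\lambda|^{1/m_j}$ to smooth convergence of the multivalued graph to $0$.

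Next I would promote this to Radon measure convergence on all of $\C^2$. The smooth graph convergence immediately gives $\mu_{P^\lambda}\to \mu_{P_d}$ as Radon measures on $\C^2\setminus\{0\}$, with the correct multiplicities $m_j$ read off of \eqref{FactorP0}. At the origin, $\Cu_{P^\lambda}$ is a holomorphic curve of degree $d$, so the monotonicity formula for holomorphic currents gives a uniform area bound $\mu_{P^\lambda}(B_r(0))\leq C d\, r^2$; the same bound holds for $\mu_{P_d}$. Letting $r\to 0$ extends the weak convergence across the origin. Alternatively, the Poincar\'e--Lelong identity $\mu_{P^\lambda} = \frac{i}{\pi}\partial\bar\partial \log|P^\lambda|$ combined with $\log|P^\lambda|\to \log|P_d|$ in $L^1_{\mathrm{loc}}$ yields convergence of currents directly. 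The final claim, that $P_d$ is determined up to scale by the blow-down, is immediate since a homogeneous polynomial in two variables is determined up to scalar by its linear factors counted with multiplicity.

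The main obstacle is the H\"older-in-$\lambda$ rate $|u|\leq C|\lambda|^{1/m_j}$ when $m_j>1$: the individual branches of the multivalued graph are only Puiseux-smooth in $\lambda$ and the implicit function theorem does not apply at $\lambda = 0$. The remedy is to work with the unordered $m_j$-tuple of $u$-values, equivalently the holomorphic elementary symmetric functions of the roots, which \emph{do} converge smoothly to $0$; this is the natural sense in which the multivalued graph converges smoothly and it suffices for the statement.
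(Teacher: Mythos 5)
Your proposal is correct, and it arrives at the same structural facts as the paper --- local graphicality of $\lambda\Cu_P$ over each plane of $\Cu_{P_d}$ with exactly $m_j$ sheets, hence Radon measure convergence with the right multiplicities --- but by a genuinely different technical route. The paper first shows the limit measure is supported on $\Cu_{P_d}$ via a lower bound for $|P^\lambda|$ off the zero set, then invokes Bezout's theorem to see that the critical points of the projection of $\Cu_P$ onto each plane form a finite set $E$; since $\lambda E$ leaves any fixed box away from the origin as $\lambda\to0$, the holomorphic implicit function theorem together with continuity of roots yields $m_j$ \emph{disjoint single-valued} holomorphic graphs converging smoothly to the plane. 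You instead get the sheet count and the explicit rate $|u|\leq C|\lambda|^{1/m_j}$ from Rouch\'e, and deal with multivaluedness through the elementary symmetric functions of the roots. Two remarks on this. First, the Puiseux issue you flag concerns regularity in $\lambda$, which the statement never requires; and since the branch points of the projection of $\lambda\Cu_P$ are the $\lambda$-dilates of a fixed finite set, they exit any fixed compact set away from $0$ for small $\lambda$, so your multivalued graph is in fact eventually a union of honest single-valued holomorphic graphs, and your $C^0$ bound plus Cauchy estimates then gives smooth convergence of the individual sheets. It is worth recording this stronger form, because it (rather than symmetric-function convergence) is what is used downstream, e.g.\ in Proposition~\ref{Nondeginfinity}, where boundary curves must converge in $C^2$ to circles counted with multiplicity. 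Second, your treatment of the origin --- the uniform area bound $\mu_{P^\lambda}(B_r)\leq C r^2$ from monotonicity (or equivalently from the scaling of the bounded area ratios of $\Cu_P$), or alternatively the Poincar\'e--Lelong identity with $\log|P^\lambda|\to\log|P_d|$ in $L^1_{\mathrm{loc}}$ --- is more explicit than the paper's, which only decomposes Borel sets in $\C^2\setminus\{0\}$ and leaves the absence of mass concentration at $0$ implicit; the Lelong argument in particular is a clean, different proof of the measure convergence.
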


\begin{proof}
We proceed in three steps.

\smallskip

\noindent{\bf Step 1}: \emph{The limit Radon measure is supported on $\Cu_{P_d}$.} 

\noindent Let $B\subseteq\bb{C}^2$ be any Borel set with compact closure such that on $\ov{B}$, $|P_d|>0$. By continuity there are positive $\delta, C$ depending on $B$ and $P$ such that on $B$, $|P_d|>\d$ and $|Q_\l|<C$ for all $0\leq\l<1$.  
Therefore, there is $\l_0(B)>0$ sufficiently small such that for all $\l<\l_0$, $|P^\l|>\frac{\d}{2}$ on $B$. The claim now follows.

\smallskip

\noindent{\bf Step 2}: \emph{Outside a finite set $E$, $\mathcal{C}_P$ may be locally written as a holomorphic graph over the planes in $\mathcal{C}_{P_d}$.}

\noindent For $P^\l$ as in \eqref{blowdownpolyform}, for each $j\in\{1,\ldots,D\}$ we consider a change of complex coordinates $(\hat{x},\hat{y})$ in $\text{SU}(2)$ such that $\hat{y}=\mu(\a_jx+\b_jy)$ for some $\mu \in \bb{R}$, and write  $P^{\l,j}$ for $P^{\l}$ in these coordinates. We see that $\pard{P^{1,j}}{\hat y}$ consists of $c_j\leq d-1$ irreducible factors. 
 Bezout's theorem implies there is a finite set of points $E^j$ such that $P^{1,j}$ and $\pard{P^{1,j}}{\hat y}$ are zero. We then set $E=\cup_{j=1}^DE^j$. The holomorphic implicit function theorem (see e.g.~\cite[Theorem B.1]{Kirwan}) now implies the claim.
\smallskip

For some $j$, using coordinates $(\hat{x},\hat{y})$ as above, we  write $\bb{C}^2=\bb{C}\times\bb{C}=\Cu_{\hat{y}}\times\Cu_{\hat{x}}$. 
We choose a simply connected open set $B_1\subset\subset \Cu_{\hat{y}}$ such that $\operatorname{dist}(B_1,0)=\delta_1>0$, and set $B_2 = \{\hat{y} \in \Cu_{\hat{x}} : |\hat{y}|<\delta_2\}$ for some $\delta_2>0$. We define $B=B_1\times B_2\subseteq\bb{C}\times\bb{C}$, and assume that $\delta_2$ is sufficiently small so that $B$ does not intersect any of the other planes in $\Cu_{P_d}$. Since $E$ is a finite set and $0\notin B$, we may choose $\l<\l_1(\delta_1,E)$ small enough such that $\l E \cap B=\emptyset$. We observe that in $B$, $\pard{P^{\l,j}}{\hat{y}}\neq 0$. 

\smallskip

\noindent \textbf{Step 3:} \emph{For $\l<\l_2(\delta_2,P, \l_1)$, $\l\Cu_P\cap B$ may be written as a graph over the planes in $\mathcal{C}_{P_d}$, and the graph functions converge smoothly to zero as $\l\ra 0$.}  

\noindent Fixing $\hat{x}\in B_1$, each $(\hat{x},0)\in B$ is a root of $P_d$ of order $m_j$. By continuity of the roots of polynomials with respect to change of coefficients, for $\l<\l_2(\delta_2,P)<\l_1$ small enough there are $m_j$ roots, counting multiplicity, of $P^{\l,j}(\hat{x},\hat{y})$ in $B$. These roots are distinct and multiplicity one, since $\pard{P^{\l,j}}{\hat{y}}\neq 0$. The holomorphic implicit function theorem now implies that for $\l<\l_2$, $\Cu_{P^\l}\cap B$ is made up of $m_j$ disjoint holomorphic graphs over $B_1\subset\Cu_{\hat{y}}$. As $B\cap\{(\hat x,\hat y)\in\bb{C}^2: |\hat y|>\e\}$ is not contained in the support of $\Cu_{P_d}$, Step 1 implies that these graph functions converge uniformly to $\Cu_{\hat{y}}\cap B$ as $\l\ra 0$. Standard properties of holomorphic functions imply that this convergence is smooth. Therefore, the area integrals converge on this region to $m_j \Ha^2(B\cap\Cu_{\hat{y}})$ and the claimed convergence follows.

\smallskip

 Any Borel set $S\subseteq\bb{C}^2\setminus\{0\}$ (up to a set of measure zero) is a countable union of Borel sets $B$ as above, and a set outside the support of $\Cu_{P_d}$, so the result follows.
\end{proof}

A similar proof to Propostion \ref{P0limit} allow us to describe the blow-up of $\Cu_P$. 
\begin{prop}\label{Blowup}
 Suppose $0\in \Cu_P$ and write $P$ as in \eqref{degreepoly}. Suppose that $k$ is the least such that $P_k\neq 0$. Then, as Radon measures, $\lambda\Cu_P \ra \Cu_{P_k}$
 as $\l\ra\infty$. 
\end{prop}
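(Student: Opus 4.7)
The plan is to mirror the proof of Proposition \ref{P0limit} almost word for word, swapping the top homogeneous piece $P_d$ for the bottom homogeneous piece $P_k$ and sending $\lambda\to\infty$ rather than $\lambda\to 0$. The key algebraic observation is that, because $0\in\Cu_P$, we have $P_0=0$, and the hypothesis forces $P_j=0$ for $0\leq j<k$, so from \eqref{eq:P.lambda} we have $\lambda\Cu_P=\Cu_{P^\lambda}$ with
$$P^\lambda(x,y)=\sum_{j=k}^d\lambda^{d-j}P_j(x,y).$$
Since multiplying the defining polynomial by a nonzero constant does not alter the zero set, I replace $P^\lambda$ by
$$\widehat P^\lambda := \lambda^{k-d}P^\lambda = P_k + \sum_{j=k+1}^d \lambda^{k-j}P_j,$$
which differs from $P_k$ by a term converging to $0$ smoothly on every compact subset of $\C^2$ as $\lambda\to\infty$. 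This is the exact structural analogue of \eqref{blowdownpolyform}, with the roles of the extreme homogeneous parts reversed.

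With this rewriting the three steps of the proof of Proposition \ref{P0limit} carry over essentially verbatim. First, on any compact set $B$ where $|P_k|\geq\delta>0$, uniform convergence yields $|\widehat P^\lambda|\geq\delta/2$ for all large $\lambda$, so $\lambda\Cu_P\cap B=\emptyset$ and the support of any weak subsequential limit lies in $\Cu_{P_k}$. Second, factoring $P_k$ as in \eqref{FactorP0} identifies $\Cu_{P_k}$ with a union of $D$ transverse planes of multiplicities $m_j$; after an $\mathrm{SU}(2)$ rotation aligning the $j$-th plane with $\{\hat y=0\}$, Bezout's theorem produces a finite set $E^j$ of common zeros of $P$ and $\partial P/\partial \hat y$. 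Third, on a product neighborhood $B=B_1\times B_2$ disjoint from the other planes and bounded away from the origin, continuity of roots of polynomials in their coefficients together with the holomorphic implicit function theorem decompose $\lambda\Cu_P\cap B$ into $m_j$ disjoint holomorphic graphs over $B_1$, which converge smoothly to $\{\hat y=0\}\cap B$ as $\lambda\to\infty$.

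The only real difference with the blow-down proof is the direction of the limit: in Proposition \ref{P0limit} one ensures $\lambda E^j\cap B=\emptyset$ by choosing $B$ on an annulus bounded away from $0$ so that the shrinking set $\lambda E^j\to\{0\}$ is avoided, whereas here $\lambda E^j$ consists of the origin itself (present precisely when $k\geq 2$) together with finitely many points escaping to infinity, so the same disjointness holds for all bounded $B$ disjoint from $0$, provided $\lambda$ is large enough. Since any Borel set in $\C^2\setminus\{0\}$ decomposes, up to an $\Ha^2$-null set, into such product pieces and into sets where $|P_k|>0$, this gives the claimed Radon measure convergence $\lambda\Cu_P\to\Cu_{P_k}$. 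I do not expect any substantive obstacle here: modulo the cosmetic rescaling of $P^\lambda$ by $\lambda^{k-d}$, the statement is genuinely the ``dual'' of Proposition \ref{P0limit}, and the only bookkeeping point worth flagging is the observation above about the behaviour of $\lambda E^j$ as $\lambda\to\infty$.
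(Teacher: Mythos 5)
Your proposal is correct and follows exactly the route the paper intends: the paper's proof of Proposition \ref{Blowup} is literally ``a similar proof to Proposition \ref{P0limit}'', and your rescaling $\widehat P^\lambda=\lambda^{k-d}P^\lambda=P_k+\sum_{j>k}\lambda^{k-j}P_j$ together with the observation that $\lambda E^j$ eventually avoids any bounded set away from the origin is precisely the bookkeeping needed to run Steps 1--3 of that proof with $P_d$ replaced by $P_k$ and $\lambda\to\infty$.
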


We now observe that we can detect when $L$ is embedded using $\Sing(\mathcal{\Cu}_P)$.

\begin{lem}\label{embeddedsingular}
 A connected special Lagrangian $L$ with bounded area ratios in $\C^2$ is embedded if and only if it may be represented by $\Cu_P$ with $\Sing(\Cu_P)=\emptyset$.
\end{lem}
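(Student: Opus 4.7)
The plan is to use Proposition \ref{complextoalgebraic} and Lemma \ref{irreducible} to reduce, after a hyperk\"ahler rotation, to the case where $L$ is represented by an irreducible algebraic curve $\mathcal{C}_P$. The claim then becomes equivalent to the Jacobian criterion: $\mathcal{C}_P$ is a smooth embedded complex submanifold of $\C^2$ if and only if $\nabla P$ does not vanish on $\mathcal{C}_P$. The ($\Leftarrow$) direction follows directly: if $\Sing(\mathcal{C}_P) = \emptyset$, then $\nabla P(q) \neq 0$ for every $q \in \mathcal{C}_P$, and the holomorphic implicit function theorem presents $\mathcal{C}_P$ locally as the graph of a holomorphic function over a complex tangent line, making $\mathcal{C}_P = L$ smoothly embedded.

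For the harder ($\Rightarrow$) direction I would argue by contradiction. Suppose $L$ is embedded but that some $q \in \Sing(\mathcal{C}_P)$; after translating take $q = 0$, so $\nabla P(0) = 0$ and the lowest order non-vanishing homogeneous part $P_k$ in the expansion \eqref{degreepoly} has degree $k \geq 2$. By Proposition \ref{Blowup}, the rescalings $\lambda\mathcal{C}_P$ converge as Radon measures to $\mathcal{C}_{P_k}$ as $\lambda \to \infty$. By the factorization \eqref{FactorP0}, $P_k = c \prod_{j=1}^D (\alpha_j u + \beta_j v)^{m_j}$ with $\sum m_j = k$, so the limiting measure is $\sum_{j=1}^D m_j \mathcal{H}^2|_{L_j}$ on the $D$ complex lines $L_j$, and hence has density $k \geq 2$ at the origin. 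On the other hand, since $L$ is smoothly embedded at $0$, the same rescalings must converge to $\mathcal{H}^2|_{T_0 L}$ of density $1$. Uniqueness of the blow-up limit then forces $k = 1$, contradicting $k \geq 2$.

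The main (minor) subtlety to verify is that the Radon measure on $\mathcal{C}_P$ used in Proposition \ref{Blowup} (the polynomial-weighted measure $\mu_P$) agrees with the natural area measure $\mathcal{H}^2|_L$ on the embedded submanifold $L$. Since $P$ is irreducible, the singular set $\Sing(\mathcal{C}_P)$ is a finite set of points and hence $\mathcal{H}^2$-null, so $\mu_P$ and $\mathcal{H}^2|_{\mathcal{C}_P}$ coincide as Radon measures on $\C^2$; this is what makes the density comparison in the previous paragraph legitimate.
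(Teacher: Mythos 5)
Your proposal is correct and follows essentially the same route as the paper: after reducing to an irreducible $P$ via Lemma \ref{irreducible}, the reverse direction is the holomorphic implicit function theorem, and the forward direction argues by contradiction at a point of $\Sing(\Cu_P)$, noting $P_0=P_1=0$ so that Proposition \ref{Blowup} gives a blow-up equal to $\Cu_{P_k}$ with $k\geq 2$ planes counted with multiplicity, contradicting the fact that the blow-up of an embedded surface at any point is a single multiplicity one plane. Your remark on $\mu_P$ agreeing with $\mathcal{H}^2|_{\Cu_P}$ for irreducible $P$ is a harmless extra verification that the paper leaves implicit.
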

\begin{proof}
By Lemma \ref{irreducible}, we may assume that $P$ is irreducible.  
If $\Sing(\Cu_P)=\emptyset$, the complex implicit function theorem \cite[Theorem B.1]{Kirwan} implies $L$ is embedded.
 
 Suppose that $L$ is embedded and represented by $\Cu_P$. Suppose for a contradiction that (without loss of generality) $0\in \Sing(\Cu_P)$. 
 Writing $P$ as in \eqref{degreepoly}, we deduce that $P_0=P_1=0$.
 Proposition \ref{Blowup} implies that blowing up $\Cu_P$ around $0$ converges to $\Cu_{P_k}$ where $k\geq 2$. Applying \cite[Lemma 2.8]{Kirwan} implies that 
 $\Cu_{P_k}$ contains 
 $k$ planes (counted with mulitplicity).  However,  $\Cu_P$ is embedded, so the blow-up around any point is a single multiplicity one plane, which gives the required contradiction.
\end{proof}

\begin{prop} \label{Nondeginfinity}
Let $L$ be a connected special Lagrangian with bounded area ratios in $\C^2$. Suppose that a blow-down of $L$ consists of $d$ planes counted with multiplicity, and $D$ planes counted without multiplicity, and that $L$ has no singularities at infinity. Then
\begin{equation}\label{eq:nondeginfinity.totcurv}
\int_L |A|^2 d\Ha^2 \leq 4\pi\left(d(d-2)+D\right),
\end{equation}
where equality holds if and only if $L$ is embedded.  
\end{prop}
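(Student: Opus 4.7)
The plan is to reduce to an algebraic Gauss--Bonnet computation. By Proposition \ref{complextoalgebraic} and Lemma \ref{irreducible}, after a hyperk\"ahler rotation I may assume $L$ is represented by an irreducible algebraic curve $\Cu_P$ of some degree $\tilde d$. By Proposition \ref{P0limit}, the blow-down of $\Cu_P$ is determined by $P_{\tilde d}$ and consists of $\tilde d$ planes with multiplicity and of $\tilde D$ distinct planes, so the hypotheses on the blow-down give $\tilde d = d$ and $\tilde D = D$. Let $\widetilde L$ be the normalization of the projective compactification $\tilde\Cu_{\tilde P}$. Since $L$ has no singularities at infinity, any singularities of $\tilde\Cu_{\tilde P}$ lie in $\Sing(\Cu_P)$, and the degree--genus formula for irreducible plane curves yields
\[
\chi(\widetilde L)\;=\;2-(d-1)(d-2)+2\sum_{p\in\Sing(\Cu_P)}\delta_p,
\]
where $\delta_p\geq 1$ is the delta invariant at each affine singular point. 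Since $L$ is a complex curve in $\C^2$ it is minimal, so the Gauss equation gives $|A|^2=-2K$ pointwise, and $\int_L|A|^2\,\rd\Ha^2=-2\int_L K\,\rd\Ha^2$.

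Next I would apply Gauss--Bonnet to $L\cap B_R$, pulled back to the immersion from $\widetilde L$. Affine singularities lie in a compact set, so for $R$ large the domain is homotopic to $\widetilde L$ with $D$ open disks removed (one per end), and
\[
\int_{L\cap B_R} K\,\rd\Ha^2\;+\;\int_{\partial(L\cap B_R)} k_g\,\rd s \;=\;2\pi\bigl(\chi(\widetilde L)-D\bigr).
\]
The main step is to evaluate $\lim_{R\to\infty}\int_{\partial(L\cap B_R)}k_g\,\rd s$, and this is where I expect the main technical obstacle to lie. The key observation is that $\int k_g\,\rd s$ is scale invariant: the rescaled curves $R^{-1}L$ converge, by Proposition \ref{P0limit}, smoothly away from the origin to the blow-down, which on each asymptotic plane $P_j$ consists of the unit circle covered with multiplicity $m_j$. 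The geodesic curvature integral on each such asymptotic loop equals $2\pi m_j$, and summing,
\[
\lim_{R\to\infty}\int_{\partial(L\cap B_R)}k_g\,\rd s\;=\;2\pi\sum_{j=1}^D m_j\;=\;2\pi d.
\]
The convergence here uses the smooth graphical description of the ends provided by Proposition \ref{P0limit} (one could alternatively cite Theorem \ref{decayestimate}), so that the pre-asymptotic boundary curves are $C^1$-close to the multiply-covered unit circles on the planes.

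Combining the two displays gives $\int_L K\,\rd\Ha^2=2\pi(\chi(\widetilde L)-D-d)$. Substituting the expression for $\chi(\widetilde L)$ and simplifying,
\[
\int_L |A|^2\,\rd\Ha^2 \;=\; 4\pi\bigl(d(d-2)+D\bigr)\;-\;8\pi\sum_{p\in\Sing(\Cu_P)}\delta_p,
\]
which yields the stated inequality. Since each $\delta_p\geq 1$, equality holds if and only if $\Sing(\Cu_P)=\emptyset$, which by Lemma \ref{embeddedsingular} is equivalent to $L$ being embedded.
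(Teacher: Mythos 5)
Your proposal is correct and follows essentially the same route as the paper: reduce to an irreducible algebraic curve, use the degree--genus (Noether) formula for the normalization with the delta invariants, apply Gauss--Bonnet to large balls and evaluate the boundary term as $2\pi d$ via scale invariance and the graphical convergence of Proposition \ref{P0limit}, with the no-singularity-at-infinity hypothesis giving both the purely affine delta sum and the count of $D$ ends, and Lemma \ref{embeddedsingular} settling the equality case. The only cosmetic differences are that you cite the classical degree--genus formula rather than Kirwan's statement of Noether's theorem and take $R\to\infty$ directly instead of rescaling to unit balls, which are equivalent.
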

\begin{proof}   By Lemma \ref{irreducible}, we have $\tilde{\Cu}_{\tilde{P}}\subseteq\bb{CP}^2$ defined by an irreducible polynomial of degree $d$, which is the projective compactification of $\Cu_P$ representing $L$.  We shall use Gauss--Bonnet and the minimality of $L$ to obtain our total curvature bound, since we know that the Gauss curvature $K$ of $L$ satisfies
\begin{equation}\label{eq:K.A}
2K=|H|^2-|A|^2=-|A|^2.
\end{equation}

By \cite[Theorem 7.12]{Kirwan}, we have a compact Riemann surface $M$ 
and a biholomorphic mapping $\pi:M \setminus \pi^{-1}(\operatorname{Sing}(\tilde{\Cu}_{\tilde{P}}))\ra \tilde{\Cu}_{\tilde{P}} \setminus \operatorname{Sing}(\tilde{\Cu}_{\tilde{P}})$. Noether's theorem \cite[Corollary 7.34, Theorem 7.37]{Kirwan} implies that the Euler characteristic 
 of $M$ is 
\begin{flalign*}\chi(M) &= d(3-d) + 2 \sum_{p\in\text{Sing}(\tilde{\Cu}_{\tilde{P}})} \delta(p),
\end{flalign*}
where $\delta(p)$ is a positive integer associated to singular points as defined in \cite[Definition 7.35, Remark 7.36]{Kirwan}. We deduce that
\begin{equation}\label{eq:chi.L}
\chi(L) = d(3-d) + 2 \sum_{p\in\text{Sing}(\tilde{\Cu}_{\tilde{P}})} \delta(p) - d^{\infty}
\end{equation}
 where $d^{\infty} = \#(\pi^{-1}(\tilde{\Cu}^\infty_{\tilde{P}}))$.  
 
Gauss--Bonnet implies that for any open set $U\subset L$,
\begin{equation}\label{eq:local.GB}
\int_U Kd\Ha^2 + \int_{\partial U} \kappa_g d\Ha^1 =2\pi \chi(U),
\end{equation}
where $\kappa_g$ is the geodesic curvature of the boundary.  Recall the rescalings $P^{\lambda}$ of $P$ in \eqref{eq:P.lambda} for $\lambda>0$.  Notice that the integrals in \eqref{eq:local.GB} are scale-invariant.

Consider a positive sequence $\l_j \ra 0$ as $j\ra \infty$ and let $U_j=B_1(0)\cap \Cu_{P^{\l_j}}$.  Proposition \ref{P0limit} implies that, for $j$ sufficiently large, $\Cu_{P^{\l_j}}$ may be written locally graphically over the planes in the blow-down $\Cu_{P_d}$, away from the origin. 
 For $j$ large enough, the boundary $\partial U_j$ may be divided into $D$ components (not necessarily connected) according to which plane in $\Cu_{P_d}$ the graph defining $U_j$ locally converges. Choose one of the planes in $\Cu_{P_d}$ and rotate coordinates so that this plane is $\Cu_y$. Suppose that the multiplicity of $\Cu_y$ in $\Cu_{P_d}$ is $m$ and denote the component of $\partial U_j$ converging to $\Cu_y$ by $\tilde \gamma_j$.  Then, due to the local convergence of $U_j$, $\tilde \gamma_j$ will converge locally smoothly to $m$ copies of $\tilde{\gamma}(t)=(e^{it}, 0)$. We therefore have that
\[\lim_{j\ra\infty}\int_{\tilde{\gamma}_j} \kappa_g d\Ha^1 = 2\pi m .\]
Repeating the above for each plane in $\Cu_{P_d}$, using \eqref{eq:local.GB} and scale-invariance gives 
\begin{equation}\label{eq:L.GB.2}
\int_L Kd\Ha^2 = \lim_{j\ra\infty}\int_{U_j} Kd\Ha^2 =  2\pi (\chi(L)-d).
\end{equation}

Combining \eqref{eq:K.A}, \eqref{eq:chi.L} and \eqref{eq:L.GB.2}  leads to
\begin{equation}\int_L |A|^2 d\Ha^2 = 4\pi\left(d(d-2)-2 \sum_{p\in\text{Sing}(\tilde{\Cu}_{\tilde{P}})} \delta(p) +d^{\infty} \right).
 \label{generaltotalcurv}
\end{equation}

Since $L$ has no singularities at infinity, each point at infinity of $\tilde{\Cu}_{\tilde{P}}$ corresponds to exactly one point in $M$, and so $d^{\infty}=D$. The claimed inequality \eqref{eq:nondeginfinity.totcurv} now holds. If we have equality in \eqref{eq:nondeginfinity.totcurv}, then 
$\sum_{p\in\text{Sing}(\tilde{\Cu})} \delta(p)=0$ by \eqref{generaltotalcurv}. 
 As $\delta(p)\geq1$, this implies there are no singular points and so Lemma \ref{embeddedsingular} yields the proposition.
\end{proof}

We can now put our results together to give the following.

\begin{theorem}\label{SLsurface}
 Suppose that $L$ is a connected special Lagrangian with bounded area ratios in $\C^2$ with a blow-down consisting of $d$ planes counted with multiplicity. Then, after a hyperk{\"a}hler rotation, $L$ may be written as the zero set of a complex polynomial of degree $d$.  Furthermore,
\[\int_L |A|^2 d\Ha^2 \leq 4\pi d(d-1),\]
where equality holds if and only if $L$ is embedded and its blow-down is $d$ distinct multiplicity one planes.  \end{theorem}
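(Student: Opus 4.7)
The plan is to combine the algebraic representation results already proved with the general total curvature identity that is implicit in the proof of Proposition \ref{Nondeginfinity}, and then bound the correction coming from the behaviour at infinity via Bezout's theorem.

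First I would assemble the algebraic structure. By Proposition \ref{complextoalgebraic}, after a hyperkähler rotation $L$ is an algebraic curve, and by Lemma \ref{irreducible} (since $L$ is connected) it can be represented by $\Cu_P$ for some irreducible polynomial $P$ of some degree $d'$. Proposition \ref{P0limit} shows that the blow-down of $\Cu_P$ is $\Cu_{P_{d'}}$, a union of planes whose multiplicities sum to $d'$. Since the blow-down of $L$ consists of $d$ planes counted with multiplicity, this forces $d'=d$, giving the first assertion of the theorem.

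For the total curvature bound I would observe that the Gauss--Bonnet/Noether argument in the proof of Proposition \ref{Nondeginfinity} actually produces the identity
\[
 \int_L |A|^2\, d\Ha^2 = 4\pi\!\left(d(d-2) - 2\!\!\sum_{p\in\Sing(\tilde{\Cu}_{\tilde{P}})}\!\!\delta(p) + d^{\infty}\right)
\]
(equation \eqref{generaltotalcurv}) without ever using the ``no singularity at infinity'' hypothesis; that hypothesis is only invoked afterwards to replace $d^\infty$ by $D$. Since $\delta(p)\geq 0$ for every singular point, it suffices to bound $d^{\infty}\leq d$. This follows from Bezout: the line at infinity $\{z=0\}$ meets $\tilde{\Cu}_{\tilde P}$ with total intersection number $d$, and at each point at infinity every analytic branch contributes at least $1$ to this intersection number (as $z$ vanishes to order $\geq 1$ on each such branch), so $d^{\infty}=\sum_{p\in\tilde{\Cu}^\infty_{\tilde P}} r_p \leq d$. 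Combining these two estimates gives the inequality $\int_L|A|^2\,d\Ha^2\leq 4\pi d(d-1)$.

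For the equality characterization I would argue as follows. Equality forces both $\sum_p \delta(p)=0$ and $d^{\infty}=d$. The first condition says $\tilde{\Cu}_{\tilde{P}}$ has no singularities anywhere, so in particular $\Sing(\Cu_P)=\emptyset$, and Lemma \ref{embeddedsingular} then gives that $L$ is embedded; moreover every point at infinity has only one branch, so $d^{\infty}=D$, and the second condition becomes $D=d$, which says the blow-down consists of $d$ distinct multiplicity-one planes. The converse direction is immediate: if $L$ is embedded and $D=d$, then each of the $d$ intersections of the line at infinity with $\tilde{\Cu}_{\tilde P}$ has multiplicity one, hence $\tilde{\Cu}_{\tilde P}$ is smooth at each such point, so $\tilde{\Cu}_{\tilde P}$ is globally smooth with $\sum_p\delta(p)=0$ and $d^\infty=D=d$, giving equality in the formula.

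The only real subtlety is to confirm that the identity \eqref{generaltotalcurv} really is established without the no-singularity-at-infinity assumption; this is a book-keeping point, since the local graphical-convergence argument that evaluates the boundary integral in Gauss--Bonnet only inspects large affine spheres and never touches the structure at $\infty$.
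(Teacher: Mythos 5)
Your proposal is correct, and its overall architecture is the same as the paper's: get the algebraic representation from Proposition \ref{complextoalgebraic}, Lemma \ref{irreducible} and Proposition \ref{P0limit}, observe that the identity \eqref{generaltotalcurv} from the proof of Proposition \ref{Nondeginfinity} never uses the no-singularity-at-infinity hypothesis (this is exactly how the paper proceeds), and then reduce the inequality to the bound $d^\infty\leq d$ together with $\delta(p)\geq 1$. The one place you genuinely diverge is the proof of $d^\infty\leq d$: the paper argues geometrically, using Proposition \ref{P0limit} to see that $L\setminus B_R$ has at most $d$ connected components for $R$ large and that punctured neighbourhoods of the points of $\pi^{-1}(\tilde{\Cu}^\infty_{\tilde P})$ inject into these ends, whereas you argue algebraically via Bezout: the line at infinity meets the irreducible degree-$d$ curve $\tilde{\Cu}_{\tilde P}$ (which does not contain it, since $P_d\not\equiv 0$) with total intersection number $d$, and each point of $\pi^{-1}(\tilde{\Cu}^\infty_{\tilde P})$ is a place/branch contributing at least $1$ to the local intersection multiplicity. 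Both arguments work; yours has the advantage that the same intersection-multiplicity bookkeeping immediately gives the smoothness of $\tilde{\Cu}_{\tilde P}$ at infinity when the blow-down is $d$ distinct multiplicity-one planes, which is the detail needed for the converse direction of the equality statement and which the paper leaves implicit by deferring to Proposition \ref{Nondeginfinity} (whose hypothesis formally excludes singularities at infinity). Two minor points: you should quote $\delta(p)\geq 1$ (a positive integer, as in the paper) rather than $\delta(p)\geq 0$, since positivity is what you use to conclude smoothness in the equality case; and the branch formula $I_p(\tilde{\Cu}_{\tilde P},\{z=0\})=\sum_{q\in\pi^{-1}(p)}\operatorname{ord}_q(\pi^*z)$ should be cited from a standard reference on plane curves, as it is the one ingredient not already set up in the paper's framework.
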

\begin{proof}
 The first part follows from Proposition \ref{complextoalgebraic}, Lemma \ref{irreducible}, Proposition \ref{P0limit} and Lemma \ref{embeddedsingular}. The rest of the result follows almost exactly as in Proposition \ref{Nondeginfinity}. The difference is that $d^{\infty}$ given in the proof of that proposition is not precisely determined: instead we claim that $d^{\infty}\leq d$. To see this, first notice that the proof of Proposition \ref{P0limit} implies there exists an $R>0$ such that there can be at most $d$ components to $L\setminus B_R$.  Recall the map $\pi$ in the proof of Proposition \ref{Nondeginfinity}. Observe that each sufficiently small neighbourhood $U_p$ of $p\in\pi^{-1}(\tilde{\Cu}_{\tilde{P}}^\infty))$ is such that $\pi:U_p\setminus\{p\}\to\pi(U_p\setminus\{p\})$ is a bijection whose image is contained in a component of $L\setminus B_R$. If $d^{\infty}=\#( \pi^{-1}(\tilde{\Cu}_{\tilde{P}}^\infty))\geq d$, this would be a contradiction to the fact that $\pi$ is biholomorphic away from singular points.
 The proposition now follows as in the proof of Proposition \ref{Nondeginfinity}.
\end{proof}

\subsection{Degree 2}

We now classify special Lagrangians satisfying \eqref{eq:bounded.area.ratios} in $\C^2$ whose blow-down is two planes $P_1$, $P_2$, counted with multiplicity (so $P_1$ can equal $P_2$).

\begin{theorem}\label{degreelessthantwo}
Suppose that $L$ is a special Lagrangian with bounded area ratios in $\C^2$ with a blow-down consisting of two planes counted with multiplicity. Then, up to rigid motions, either
\begin{itemize}
\item[(a)] $L$ is two planes, or
\item[(b)] $L=L_{\phi}(a+ib)$ given in Example \ref{ex.SL.dim2} and $\int_L|A|^2 d\Ha^2=8\pi$, or
\item[(c)] $L=L(a+ib)$ given in Example \ref{ex:SL.z2} and $\int_L|A|^2 d\Ha^2=4\pi$.
\end{itemize}
\end{theorem}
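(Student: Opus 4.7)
The plan is to invoke Theorem \ref{SLsurface} with $d=2$, so that after a hyperk\"ahler rotation $L = \Cu_P$ for some complex polynomial $P(x,y)$ of degree $2$. Writing $P = P_2 + P_1 + P_0$ in its homogeneous components, Proposition \ref{P0limit} identifies the blow-down of $L$ with $\Cu_{P_2}$; the hypothesis that the blow-down consists of two planes with multiplicity is therefore equivalent to $P_2$ being a nonzero product of two linear forms, which are either distinct (so the blow-down is two transverse planes) or proportional (so it is one double plane). If $P$ is reducible, it splits as a product of two linear polynomials and $\Cu_P$ is a union of two affine complex lines (possibly coincident); hyperk\"ahler-rotating back gives a pair of Lagrangian planes, which is case (a). I may therefore assume $P$ is irreducible and split into the two subcases according to the structure of $P_2$.

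Suppose first $P_2$ has two distinct linear factors. Its Hessian is then nondegenerate, so a translation (a rigid motion of $\C^2$) eliminates $P_1$ and reduces $P$ to $P_2 + c$; irreducibility forces $c \neq 0$, so $L$ is a smooth nondegenerate complex conic. Hyperk\"ahler-rotating back, $L$ is a smooth special Lagrangian in $\C^2$ asymptotic to a pair of transverse Lagrangian planes $P_0 \cup P_\phi$ for some $\phi \in (0,\pi)$, where $\phi$ is determined by the relative slopes of the two factors of $P_2$. The complex-geometric uniqueness noted at the end of Example \ref{ex.SL.dim2} --- that the $L_\phi(a+ib)$ are, up to rigid motions, the only special Lagrangians in $\C^2$ with this asymptotic behaviour other than the pair of planes itself --- then forces $L = L_\phi(a+ib)$ for some $(a,b) \in \R^2 \setminus \{(0,0)\}$, giving case (b). For the curvature, a complex linear change brings $P$ to $xy - c$; the projective closure $\tilde P(x,y,z) = xy - cz^2$ has two smooth points at infinity $[1,0,0]$ and $[0,1,0]$, so Proposition \ref{Nondeginfinity} with $d=D=2$ gives $\int_L |A|^2\,\rd\Ha^2 = 4\pi(d(d-2)+D) = 8\pi$.

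Suppose instead $P_2$ is a scalar multiple of a perfect square. An SU(2) rotation together with a nonzero rescaling of $P$ (which preserves $\Cu_P$) allows me to assume $P_2 = x^2$, so $P = x^2 + ax + by + c$. If $b=0$ then $P$ depends only on $x$ and splits as $(x - r_1)(x-r_2)$, contradicting irreducibility, so $b \neq 0$. Completing the square in $x$ via the translation $x \mapsto x - a/2$ and then translating in $y$ to absorb the resulting constants (both rigid motions) brings $P$ to $x^2 + by$. This is the complex curve $\{y = -x^2/b\}$ of Example \ref{ex:SL.z2}, so hyperk\"ahler-rotating back yields $L(a'+ib')$ with $(a'+ib')^2 = -b$, establishing case (c). The projective closure $\tilde P(x,y,z) = x^2 + byz$ has a unique point at infinity $[0,1,0]$, smooth since $\tilde P_z = by \neq 0$ there, so Proposition \ref{Nondeginfinity} with $d=2$, $D=1$ gives $\int_L |A|^2\,\rd\Ha^2 = 4\pi$. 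The main obstacle in this plan is the reliance on complex-geometric uniqueness in case (b): one must verify that the three-parameter family $L_\phi(a+ib)$ really exhausts, up to rigid motions composed with hyperk\"ahler rotations, all irreducible complex conics in $\C^2$ --- i.e.\ that the intrinsic moduli $\phi$, $a$, $b$ account for all residual freedom left after translating and SU(2)-normalising $P$.
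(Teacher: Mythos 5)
Your proposal is correct and follows essentially the same route as the paper's proof: reduce to an irreducible algebraic curve of degree two via Proposition \ref{complextoalgebraic}, Lemma \ref{irreducible} and Proposition \ref{P0limit}, split according to whether $P_2$ has distinct or repeated linear factors, identify the two normal forms with Examples \ref{ex.SL.dim2} and \ref{ex:SL.z2}, and compute the total curvature from Proposition \ref{Nondeginfinity} after checking smoothness at infinity (the equality case being justified by embeddedness, via Lemma \ref{embeddedsingular}). The identification you worry about in case (b) is exactly the uniqueness statement recorded after Example \ref{ex.SL.dim2}, which the paper also relies on at the same level of detail, and your treatment of reducible $P$ replaces (and covers) the paper's separate discussion of disconnected $L$, where Theorem \ref{SLsurface} itself would not directly apply.
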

\begin{proof}
We first suppose that $L$ is connected. Proposition \ref{complextoalgebraic}, Lemma \ref{irreducible}, equation \eqref{blowdownpolyform} and Proposition \ref{P0limit} imply that $L$ may be represented by an algebraic curve $\Cu_P$ where $P$ is irreducible and may be written (where all coefficients are in $\C$): \[P(x,y) = (\a_1 x+\b_1 y)(\a_2 x+\b_2y) +cx+dy+f.\]
Applying an $\text{SU}(2)$ transformation, we can rewrite $P$  as
\[P(x,y) = x(ax+by) +cx+dy+e.\]
Suppose that $L$ is not equal to the blow-down. We then split into two cases.
\subsubsection*{Case 1: $b\neq 0$}
We may complete squares to yield
\begin{flalign*}
P(x,y)
&=(x+db^{-1})(ax+by+(c-db^{-1}a)) +e-db^{-1}(c-db^{-1}a) .
\end{flalign*}
We see that, after hyperk\"ahler rotation, this is a translation of one of the special Lagrangians in Example \ref{ex.SL.dim2}. There are no singularities at infinity and two distinct planes in the blow-down, so Proposition \ref{Nondeginfinity} gives the formula for the total curvature.

\subsubsection*{Case 2: $b=0$}
As $P$ is irreducible and $L$ is non-planar we must have $d\neq 0$ and
\[y=-d^{-1}(ax^2+cx+e)=-d^{-1}(a(x+\frac{c}{2a})^2+e-\frac{c^2}{4a^2}.\]
A short calculation shows that there is no singular point at infinity, but there is only one plane in the blow-down (counted without multiplicity). Proposition \ref{Nondeginfinity} then gives the total curvature of $L$.\medskip

If $L$ is not connected then, since it has bounded area ratios, it has a finite number of connected components $L_1,\ldots,L_k$. Applying Proposition \ref{complextoalgebraic} and Lemma \ref{irreducible} to each $L_j$, we get irreducible polynomials $P^j$ of degree $d_j\geq 1$ so that $L_j=\Cu_{P^j}$. Writing $P^j = P^j_{d_j}+Q^j$ as in \eqref{blowdownpolyform}, Proposition \ref{P0limit} implies that the blow-down of $L_j$ is  determined by $P^j_{d_j}$. Since $L =\bigsqcup_{j=1}^k L_j
$ has a blow-down consisting of two planes (counted with multiplicity), we must have $\sum_{j=1}^k d_j =2$. As $L$ is not connected, $L$ must consist of two components of degree one, so $L$ is a disjoint union of two parallel planes determined by its blow-down.
\end{proof}

\section{Applications}\label{sect:applications}

\subsection{Topology of blow-ups}

We begin with the following simple but important observation which allows us to utilize our structure theory, which is given in \cite{Cooper}.  

\begin{prop}\label{prop:top.blowup}
Let $(L_t)_{0\leq t<T}$ be a solution to LMCF with uniformly bounded area ratios which has a singularity at time $T$. Let $L^{\infty}_s$ be a Type II blow-up of $L_t$ as given in Definition \ref{defn:type.II}. 
\begin{itemize}
\item[(a)] If the singularity is Type I, i.e.~\eqref{eq:Type.I.blowup} holds, then $L^{\infty}_s$ is a self-shrinker and hence is monotone.
\item[(b)] Otherwise, the singularity is Type II and $L^{\infty}_s$ is exact and zero-Maslov. 
\end{itemize}
\end{prop}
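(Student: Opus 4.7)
The plan for part (a) is to apply Huisken's monotonicity formula \eqref{eq:monotonicity} with $f\equiv 1$ to the original flow centered at $(x_i,T)$. The Gaussian density $\Theta_t(x_i,T)$ is non-increasing in $t$ and bounded below, so converges to a limit $\Theta_\infty$ as $t\nearrow T$. Scale-invariance of Gaussian densities gives $\Theta^i_s(0,l_i)=\Theta_{t_i+\sigma_i^{-2}s}(x_i,T)$, where $l_i=\sigma_i^2(T-t_i)$. The Type I hypothesis \eqref{eq:Type.I.blowup} forces $l_i\leq C$, so after extracting a subsequence $l_i\to l_\infty\in(0,\infty)$ (adjusting the choice of base points if necessary to keep $l_\infty>0$). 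Since $\Theta^i_s(0,l_i)\to\Theta_\infty$ for each fixed $s$, integrating the monotonicity identity on $[a,0]$ yields
\[\int_a^0\int_{L^i_s}\Bigl|\rd\theta^i_s+\frac{\lambda^i_s}{2(s-l_i)}\Bigr|^2\Phi_{(0,l_i)}\,\rd\Ha^n\,\rd s\longrightarrow 0.\]
Smooth convergence $L^i\to L^\infty$ (as $|A^i|\leq 1$ on any Type II rescaling) then gives $\rd\theta^\infty_s+\lambda^\infty_s/(2(s-l_\infty))\equiv 0$; a time shift by $-l_\infty$ places $L^\infty$ in the self-shrinker form of the paper, and Lemma~\ref{lem:self.monotone} yields monotonicity.

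For part (b) I would first arrange the blow-up sequence so that $l_i=\sigma_i^2(T-t_i)\to\infty$, and then exploit scale separation on loops. Given any loop $\gamma\subset L^\infty$, smooth convergence on bounded regions produces approximating loops $\gamma^i\subset L^i$ of uniformly bounded length which pull back to loops $\tilde\gamma^i\subset L_{t_i}$ of extrinsic diameter $O(\sigma_i^{-1})$ concentrating at $x_\infty:=\lim x_i$. For zero-Maslov, scale-invariance of $\theta$ gives $\int_\gamma\rd\theta^\infty=\lim_i\int_{\tilde\gamma^i}\rd\theta_{t_i}$; each integral lies in $\pi\mathbb{Z}$ by Maslov integrality, while the constancy of $[\rd\theta_t]$ from Lemma~\ref{lem:evol.eqs} combined with the smoothness of $L_0$ at $x_\infty$ makes the homotopy class of $\tilde\gamma^i$, when pulled back to $L_0$ via the LMCF diffeomorphism, eventually trivial; the integer is therefore forced to vanish. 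For exactness, a direct computation of $F_i^*\lambda$ for the rescaling-translation $F_i(x)=\sigma_i(x-x_i)$ gives $F_i^*\lambda=\sigma_i^2\lambda+\rd\beta_i$ for an explicit linear $\beta_i$ on $\C^n$, hence
\[\int_{\gamma^i}\lambda=\sigma_i^2\int_{\tilde\gamma^i}\lambda_{t_i}.\]
Lemma~\ref{lem:evol.eqs} together with the zero-Maslov step above reduces the right-hand side to $\sigma_i^2\int_{\tilde\gamma^i}\lambda_0$, and Stokes applied to a disk $D^i\subset L_0$ bounding (the pullback of) $\tilde\gamma^i$, combined with $\omega|_{L_0}\equiv 0$, kills this integral.

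The hardest step will be justifying null-homotopy of $\tilde\gamma^i$ in $L_0$ for $i$ large. Although $L_0$ is smooth, the LMCF diffeomorphism from $L_0$ to $L_{t_i}$ distorts distances by factors that blow up with $\sigma_i$ near the curvature-concentration point, so a loop of extrinsic diameter $O(\sigma_i^{-1})$ in $L_{t_i}$ need not correspond to a small loop in $L_0$. Overcoming this obstacle will require exploiting the Type II scale separation $l_i\to\infty$ together with the uniformly bounded curvature on the rescaled scale to produce the required contractibility in a controlled way, which is the essence of Cooper's observation.
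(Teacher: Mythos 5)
Your part (a) is essentially the standard monotonicity argument that the paper also invokes without detail, and is fine. The problem is part (b): your whole argument hinges on the claim that the loops $\tilde\gamma^i\subset L_{t_i}$, pulled back to $L_0$ by the flow diffeomorphism, are eventually null-homotopic --- you use this once to kill the integer $\frac{1}{\pi}\int\rd\theta_0$ and again to apply Stokes over a bounding disk for the $\lambda$-integral. This step is not merely hard, it is false in exactly the situations the proposition is meant to address: if a homologically essential cycle of $L_0$ collapses at the singular point (neck-type behaviour, e.g.\ the vanishing $\mathcal{S}^1$ of a Lawlor-type neck), the approximating loops have extrinsic diameter $O(\sigma_i^{-1})$ in $L_{t_i}$ but represent a fixed nontrivial class of $H_1(L)$; extrinsic smallness in $\C^n$ gives no control on the intrinsic homotopy class, and no exploitation of the scale separation $\sigma_i^2(T-t_i)\to\infty$ will make such loops contractible in $L_0$. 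So, as you yourself flag, both the zero-Maslov and the exactness conclusions are left with a genuine gap.

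The paper's proof needs no topological smallness at all; it is purely cohomological and quantitative, and your own scaling computation already contains the key identity. For $\Gamma\subset L^\infty_s$ approximated by $\Gamma_i=\sigma_i(\iota_{t_i+\sigma_i^{-2}s}\circ\gamma_i-x_i)$, Lemma \ref{lem:evol.eqs} gives
\[
\int_{\Gamma_i}\lambda=\sigma_i^2\int_{\gamma_i}\bigl(\iota_0^*\lambda-2(t_i+\sigma_i^{-2}s)\,\rd\theta_0\bigr),
\]
and one uses only that the left-hand side converges to the finite number $\int_\Gamma\lambda$. Since $\sigma_i^2\to\infty$, this forces $\int_{\gamma_i}(\iota_0^*\lambda-2T\rd\theta_0)\to0$; since the singularity is Type II, $\sigma_i^2(T-t_i)\to\infty$ then forces $\int_{\gamma_i}\rd\theta_0\to 0$, and because these periods lie in the discrete set $\pi\bb{Z}$ (your Maslov integrality) they vanish identically for large $i$; feeding this back gives $\int_{\gamma_i}\iota_0^*\lambda=0$ for large $i$, hence $\int_\Gamma\lambda=0$, and by scale invariance of $\theta$ and constancy of $[\rd\theta_t]$ also $\int_\Gamma\rd\theta_s^\infty=0$. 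In short, the divergence of the scale factors played off against the finiteness of the limiting period does the work you were trying to extract from contractibility; note also that in the paper the zero-Maslov statement is not proved separately and then used for exactness, as in your plan, but both conclusions drop out of the single identity above. If you delete the null-homotopy claim and close your computation this way, the proof becomes correct and coincides with the paper's.
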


\begin{proof}
For (a), the fact that the blow-up is a self-shrinker is a standard consequence of the monotonicity formula \eqref{eq:monotonicity}.  Part (a) then follows by Lemma \ref{lem:self.monotone}.

For (b), we may view $L_t$ as the image of a family of immersions $\iota_t:L\to L_t$ for some fixed manifold $L$.  We write $L_s^i$ for the Type II rescalings as in Definition \ref{defn:type.II}, and observe that the curvature bound along with standard estimates and a diagonal argument imply $C^\infty_{\text{loc}}$ convergence to $L^\infty_s$ (cf.~\cite[Proposition 1.1]{Smoczyk.Longtime}). 

Suppose that $H^1(L^{\infty}_s)\neq 0$, otherwise the result is trivial.  Then there is a smooth closed curve $\Gamma\subset L^\infty_s$ with $[\Gamma]\in H_1(L^\infty_s)$ such that $[\Gamma]\neq 0$. We have a sequence of smooth curves $\Gamma_i\subset L^i_s$ such that the $\Gamma_i$ converge to $\Gamma$ smoothly. We consider the sequence of curves $\gamma_i\subset L$ such that $\Gamma_i = \sigma_i(\iota_{t_i+\sigma_i^{-2}s}\circ \gamma_i-x_i)$.
We have that
\begin{align}\label{eq:int.gamma.1}
\int_{\Gamma}\lambda=\lim_{i\ra\infty}\int_{\Gamma_i}\lambda  = \lim_{i\to\infty}
\int_{\gamma_i}\sigma_i^2(\iota_{t_i+\sigma_i^{-2}s})^*\lambda,
\end{align}
where the extra factor of $\sigma_i$ arises from the fact that we scale both $\lambda$ and $L_t$.  
Lemma \ref{lem:evol.eqs} implies that $[\rd\theta_t]=[\rd\theta_0]$ and 
\begin{equation}\label{eq:lambda.cohom.1}
[(\iota_{t_i+\sigma_i^{-2}s})^*\lambda]-[\iota_0^*\lambda]=
-2(t_i+\sigma_i^{-2}s)[\rd\theta_0].
\end{equation}
Hence, combining \eqref{eq:int.gamma.1} and \eqref{eq:lambda.cohom.1}, we have
\begin{align}
\int_{\Gamma}\lambda
&=\lim_{i\to\infty}\sigma_i^2\int_{\gamma_i}\iota_0^*\lambda-2(t_i+\sigma_i^{-2}s)\rd\theta_0\nonumber\\
&=-2s\lim_{i\ra\infty}\int_{\gamma_i}\rd\theta_0+\lim_{i\to\infty}\sigma_i^2\int_{\gamma_i}\iota_0^*\lambda-2t_i\rd\theta_0.\label{eq:int.gamma.2}
\end{align}
The left-hand side of \eqref{eq:int.gamma.2} is finite, and $\gamma_i$ is a representative of a finite homology class, and so since $\sigma_i\to\infty$ and $t_i\to T$ by assumption, we must have 
\begin{equation}\label{eq:lambda.cohom.2}
\lim_{i\ra\infty}\int_{\gamma_i}\iota_0^*\lambda-2T\rd\theta_0=0.
\end{equation}
Re-inserting this in \eqref{eq:int.gamma.2}, we see that we need
\begin{equation*}
\lim_{i\to\infty}\sigma_i^2(T-t_i)\int_{\gamma_i}\rd\theta_0<\infty.
\end{equation*}
The Type II assumption therefore forces 
\begin{equation*}
\lim_{i\ra\infty}\int_{\gamma_i}\rd\theta_0=0,
\end{equation*}
but since the possible values of path integrals of $\rd \theta_0$ is a discrete set containing $0$, for all $i$ sufficiently large,
\begin{equation}\label{eq:theta.cohom.1}
 \int_{\gamma_i}\rd\theta_0=0 .
\end{equation}
By scale invariance of the Lagrangian angle and the preservation of its cohomology class along the flow we have 
\begin{equation*}
\int_{\Gamma_i}\rd\theta_s=0  
\end{equation*}
and, due to smooth convergence of $L^i_s$ to $L^\infty_s$,
\begin{equation}\label{eq:theta.cohom.2}
\int_{\Gamma}\rd\theta_s^\infty=0. 
\end{equation}

Similarly, \eqref{eq:lambda.cohom.2} implies that $\int_{\gamma^i} \iota_0^* \lambda =0$ for $i$ large, and so \eqref{eq:int.gamma.2} and \eqref{eq:theta.cohom.1} force
\begin{equation}\label{eq:lambda.cohom.3}
\int_{\Gamma}\lambda=0.
\end{equation}
Since \eqref{eq:theta.cohom.2} and \eqref{eq:lambda.cohom.3} hold for all such $\Gamma$,   $L^{\infty}_s$ is exact and zero-Maslov.
\end{proof}

Proposition \ref{prop:top.blowup} has the following interesting consequence. 

\begin{cor} If $b\neq 0$, the special Lagrangian $L_{\phi}(a+ib)$ in $\C^2$ in Example \ref{ex.SL.dim2} cannot arise as a Type II blow-up of a solution to LMCF as in Proposition \ref{prop:top.blowup}.
\end{cor}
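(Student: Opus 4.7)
The plan is to suppose for contradiction that $L=L_{\phi}(a+ib)$ with $b\neq 0$ arises as the Type II blow-up $L^\infty_s$ of some solution $(L_t)_{0\leq t<T}$ to LMCF developing a singularity at time $T$, and then apply Proposition \ref{prop:top.blowup}, which forces the blow-up to fall into one of two mutually exclusive cases. In each case I will derive a contradiction using the properties of $L_\phi(a+ib)$ recorded in Example \ref{ex.SL.dim2}, namely that $L_\phi(a+ib)$ is special Lagrangian, is diffeomorphic to $\mathcal{S}^1\times\R$, and is exact if and only if $b=0$.

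First I would handle case (b) of Proposition \ref{prop:top.blowup}, which is almost immediate: if the singularity is Type II in the sense of \eqref{eq:Type.I.blowup}, then $L^\infty_s$ must be exact. However, Example \ref{ex.SL.dim2} explicitly records that $L_{\phi}(a+ib)$ is not exact when $b\neq 0$, so this case is ruled out directly.

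Next I would rule out case (a): if the singularity is Type I, then Proposition \ref{prop:top.blowup}(a) shows that $L^\infty_s$ is a self-shrinker. Since $L_{\phi}(a+ib)$ is special Lagrangian, its Lagrangian angle is constant, so $\rd\theta\equiv 0$. The self-shrinker equation $\rd\theta_t + \frac{\lambda_t}{2t}=0$ then forces $\lambda_t\equiv 0$ for all $t<0$. By Lemma \ref{lem:magic.facts}(b), this means $L$ is a cone. But $L_{\phi}(a+ib)$ is diffeomorphic to $\mathcal{S}^1\times\R$ and is asymptotic to (but not equal to) the pair of transverse planes $P_0\cup P_\phi$, hence is not invariant under dilations, contradicting that it is a cone.

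The argument is largely mechanical once one has Proposition \ref{prop:top.blowup} in hand; the only nontrivial ingredient beyond the dichotomy itself is the observation that a special Lagrangian self-shrinker must be a cone, which is a direct consequence of the shrinker equation together with Lemma \ref{lem:magic.facts}(b). I do not anticipate a serious obstacle, since the non-exactness of $L_{\phi}(a+ib)$ for $b\neq 0$ is stated in Example \ref{ex.SL.dim2} and its topology precludes cone-like structure.
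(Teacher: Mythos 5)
Your proposal is correct. The only place where it diverges from the paper is in how the Type I/self-shrinker case is dispatched: the paper's proof is a one-liner which observes that, by Lemma \ref{lem:self.monotone}, a self-shrinker is monotone and a monotone zero-Maslov Lagrangian is exact, so \emph{both} alternatives of Proposition \ref{prop:top.blowup} reduce to the single fact that $L_{\phi}(a+ib)$ with $b\neq 0$ is not exact. You instead argue in case (a) directly from the shrinker equation: since the blow-up is special Lagrangian, $\rd\theta\equiv 0$, so $\rd\theta_t+\frac{\lambda_t}{2t}=0$ forces $\lambda_t\equiv 0$, whence by Lemma \ref{lem:magic.facts}(b) the blow-up would be a cone, contradicting that $L_{\phi}(a+ib)$ is a smooth submanifold asymptotic to, but distinct from, $P_0\cup P_{\phi}$ (equivalently, not dilation-invariant). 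Both routes are valid and short; the paper's is slightly more economical since exactness is the single obstruction in both cases, while yours avoids invoking monotonicity altogether and makes the self-shrinker case geometrically explicit. Your handling of case (b) is identical to the paper's.
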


\begin{proof}
 Since $L_{\phi}(a+ib)$ for $b\neq 0$ is zero-Maslov but not exact, it is also not monotone, and so cannot arise as a Type II blow-up by Proposition \ref{prop:top.blowup}. 
\end{proof}

\subsection{Lawlor necks as singularity models}
We now give our first application.

\begin{proof}[Proof of Theorem \ref{thm:Lawlor}]
If $P_1$ and $P_2$ have distinct Lagrangian angles, Proposition \ref{prop:planar.blow-down} implies we are in case (a). If $P_1$ and $P_2$ have the same Lagrangian angle, Proposition \ref{prop:SL.blowdown} implies that $L_t$ is special Lagrangian, and the result follows from Theorem \ref{thm:SL.planar.uniqueness}. 
\end{proof}

\begin{remark}
Theorem \ref{thm:Lawlor} is false if we do not assume the planes are transverse.  For example, the translators in Example \ref{ex:translators} are exact, almost calibrated, eternal solutions which are not special Lagrangian and whose blow-down is a pair of planes intersecting along a line.  
\end{remark}

\subsection{Harvey--Lawson \texorpdfstring{$T^2$}{T2}-cone} We now rule out the Harvey-Lawson $T^2$ as a blow-down.

\begin{proof}[Proof of Theorem \ref{thm:HL}]
Suppose for a contradiction that $C$ occurs as a blow-down. Since $C$ is special Lagrangian, Proposition \ref{prop:SL.blowdown} implies that the Type II blow-up is special Lagrangian.  Work by Imagi \cite{Imagi} then shows that a smooth special Lagrangian in $\C^3$ converging to $C$ as a varifold must be one of the Harvey--Lawson smoothings $L_j(a_j)$ given in Example \ref{ex:HL}.  However, the singularity must be Type II as $(L_t)_{0\leq t<T}$ is zero-Maslov, so Proposition \ref{prop:top.blowup} implies that the Type II blow-up is exact. This contradicts the fact that the Harvey--Lawson smoothings are not exact. 
\end{proof}

\subsection{Multiplicity two plane} Finally, we classify the ancient solutions whose blow-down is a multiplicity two plane.

\begin{proof}[Proof of Theorem \ref{thm:dimension2}]
 Since $(L_t)_{0\leq t<T}$ is almost calibrated and $P$ has a single Lagrangian angle, Proposition \ref{prop:SL.blowdown} implies that $(L_t)_{0\leq t<T}$ is special Lagrangian for all $t<0$. Theorem 
 \ref{degreelessthantwo} yields the claimed result.
\end{proof}

\providecommand{\bysame}{\leavevmode\hbox to3em{\hrulefill}\thinspace}
\providecommand{\MR}{\relax\ifhmode\unskip\space\fi MR }
\providecommand{\MRhref}[2]{%
  \href{http://www.ams.org/mathscinet-getitem?mr=#1}{#2}
}
\providecommand{\href}[2]{#2}

\end{document}